\newcommand{\bP}{\mathbb{P}}
\newcommand{\bC}{\mathbb{C}}
\newcommand{\diag}{\operatorname{diag}}
\newcommand{\rd}{\mathrm{d}}
\newcommand{\bN}{\mathbb{N}}
\newcommand{\bR}{\mathbb{R}}
\newcommand{\supp}{\operatorname{supp}}
\newcommand{\Res}{\operatorname{Res}}
\newcommand{\Id}{\mathrm{Id}}
\newcommand{\sP}{\mathsf{P}}
\newcommand{\sA}{\mathsf{A}}
\newcommand{\cS}{\mathcal{S}}
\newcommand{\cZ}{\mathcal{Z}}
\newcommand{\can}{\operatorname{can}}
\newcommand{\cE}{\mathcal{E}}
\newcommand{\sd}{\mathsf{d}}
\newcommand{\Lip}{\operatorname{Lip}}
\newcommand{\PGL}{\mathit{PGL}}
\newcommand{\diam}{\operatorname{diam}}
\newcommand{\ord}{\operatorname{ord}}
\newcommand{\cB}{\mathcal{B}}
\newcommand{\sH}{\mathsf{H}}
\newcommand{\sJ}{\mathsf{J}}
\newcommand{\sF}{\mathsf{F}}
\newcommand{\bZ}{\mathbb{Z}}
\newcommand{\cO}{\mathcal{O}}
\theoremstyle{plain}
\newtheorem{theorem}{Theorem}[section]
\newtheorem{lemma}[theorem]{Lemma}
\newtheorem{proposition}[theorem]{Proposition}
\newtheorem{mainth}{Theorem}
\theoremstyle{definition}
\newtheorem{definition}[theorem]{Definition}
\newtheorem{notation}[theorem]{Notation}
\newtheorem*{question}{Question}
\theoremstyle{remark}
\newtheorem{remark}[theorem]{Remark}
\newtheorem{fact}[theorem]{Fact}
\newtheorem{example}[theorem]{Example}
\numberwithin{equation}{section}
\begin{document} 

\title[Algebraic zeros divisors on the projective line]{
Algebraic zeros divisors on the projective line having small diagonals and 
small heights and their application to adelic dynamics}

\author[Y\^usuke Okuyama]{Y\^usuke Okuyama}
\address{
Division of Mathematics,
Kyoto Institute of Technology,
Sakyo-ku, Kyoto 606-8585 Japan.}
\email{okuyama@kit.ac.jp}

\date{\today}

\subjclass[2010]{Primary 37P30; Secondary 11G50, 37P50, 37F10}
\keywords{product formula field,
algebraic zeros divisor, small diagonals, small heights,
quantitative equidistribution, asymptotically Fekete configuration, 
local proximity sequence, adelic dynamics}

\begin{abstract}
We establish a quantitative adelic equidistribution theorem
for a sequence of algebraic zeros divisors on the projective line
over the separable closure of a product formula field
having small diagonals and 
small $g$-heights with respect to an adelic normalized
weight $g$ in arbitrary characteristic and
in possibly non-separable setting, and obtain
local proximity estimates 
between the iterations of a rational function $f\in k(z)$ of degree $>1$
and a rational function $a\in k(z)$ of degree $>0$ 
over a product formula field $k$
of characteristic $0$, applying this quantitative adelic equidistribution
result to adelic dynamics of $f$.
\end{abstract}

\maketitle 


\section{Introduction}\label{sec:intro}

Let $k$ be a field and denote by $k_s$ the separable closure of $k$
in an algebraic closure $\overline{k}$ of $k$. For every $d\in\bN\cup\{0\}$,
let $k[p_0,p_1]_d$ be the set of all homogeneous polynomials of
two variables over $k$ of degree $d$.
A $k$-{\itshape algebraic
zeros divisor} $\cZ$ on $\bP^1(\overline{k})$ is a divisor on $\bP^1(\overline{k})$
defined by the zeros in $\bP^1(\overline{k})$
of a
$P\in\bigcup_{d\in\bN}k[p_0,p_1]_d$
{\itshape taking into account their multiplicities}, and
is said to be on $\bP^1(k_s)$ if $\supp\cZ\subset\bP^1(k_s)$:
the defining polynomial $P(p_0,p_1)$ of $\cZ$
is unique up to multiplication in $k^*(=k\setminus\{0\})$, and
is called a {\itshape representative}
of $\cZ$. Algebraic zeros divisors include Galois conjugacy classes of
algebraic numbers, and are also called {\itshape Galois stable multisets}
in $\bP^1(\overline{k})$.

Our aims in this article are to establish a
{\itshape quantitative} adelic equidistribution
of a sequences of $k$-algebraic zeros divisors
on $\bP^1(k_s)$, where $k$ is a {\itshape product formula} field,
having not only small $g$-heights 
(with respect to an adelic normalized weight $g$) 
but also {\itshape small diagonals} 
in arbitrary characteristic and in possibly non-separable setting, and
to contribute to the study of the local {\itshape proximities} 
between the iterations of a rational function $f\in k(z)$ of degree $>1$
and a rational function $a\in k(z)$ of degree $>0$
on a chordal disk $D$ of radius $>0$ in the projective line $\bP^1(\bC_v)$
for each place $v$ of $k$, in the setting of adelic dynamics of characteristic
$0$.

\subsection{Arithmetic over a product formula field}
A field $k$ is a {\itshape product formula field} if $k$ is equipped with
(i) a set $M_k$ of all places of $k$, which are either {\itshape finite}
or {\itshape infinite}, (ii) a set $\{|\cdot|_v:v\in M_k\}$,
where for each $v\in M_k$, $|\cdot|_v$ is a non-trivial absolute value
of $k$ representing $v$
(and then by definition the
$|\cdot|_v$ is non-archimedean if and only if $v$ is finite),
and (iii) a set $\{N_v:v\in M_k\}$,
where $N_v\in\bN$ for every $v\in M_k$,
such that the following {\itshape product formula} holds:
for every $z\in k\setminus\{0\}$, $|z|_v\neq 1$ for
at most finitely many $v\in M_k$ and
\begin{gather} 
 \prod_{v\in M_k}|z|_v^{N_v}=1.\tag{PF}\label{eq:product}
\end{gather}
Product formula fields include number fields
and function fields over curves, and a product formula field $k$ is a
number field if and only if there is at least one infinite place of $k$
(see, e.g., the paragraph after \cite[Definition 7.51]{BR10}). 

Let $k$ be a product formula field. For each $v\in M_k$, let $k_v$ be the completion
of $k$ with respect to $|\cdot|_v$ and $\bC_v$ the completion of an algebraic
closure $\overline{k}_v$ of $k_v$ with respect to (the extended) $|\cdot|_v$, and
we fix an embedding of $\overline{k}$ to $\bC_v$ which extends that of $k$ to $k_v$:
by convention, the dependence of a local quantity
induced by $|\cdot|_v$ on each $v\in M_k$ is emphasized 
by adding the suffix $v$ to it.
A family $g=\{g_v:v\in M_k\}$ is an {\itshape adelic continuous weight}
if (i) for every $v\in M_k$, $g_v$ is a continuous function
on the {\itshape Berkovich} projective line $\sP^1(\bC_v)$ such that
\begin{gather*}
 \mu_v^g:=\Delta g_v+\Omega_{\can,v}
\end{gather*}
is a probability Radon measure on $\sP^1(\bC_v)$
(see \eqref{eq:spherical} and \eqref{eq:Laplacian} for the definition of
the probability Radon measure $\Omega_{\can,v}$ on $\sP^1(\bC_v)$
and the normalization of the Laplacian $\Delta$
on $\sP^1(\bC_v)$, respectively) and (ii)
there is a finite subset $E_g$ in $M_k$ such that 
$g_v\equiv 0$ on $\sP^1(\bC_v)$
for every $v\in M_k\setminus E_g$, and is still called 
an {\itshape adelic normalized weight} if (iii) in addition
the $g_v$-{\itshape equilibrium energy} $V_{g_v}$ of $\sP^1(\bC_v)$
vanishes for every $v\in M_k$
(see \S\ref{th:potential} for the definition of $V_{g_v}$).
For an adelic continuous weight $g=\{g_v:v\in M_k\}$,
the family $\mu^g:=\{\mu^g_v:v\in M_k\}$ is called
an {\itshape adelic probability measure}
(cf.\ \cite[D\'efinition 1.1]{FR06}).
An adelic continuous weight
$g=\{g_v:v\in M_k\}$ is said to be {\itshape placewise H\"older continuous} if
for every $v\in M_k$, $g_v$ is H\"older continuous on $\sP^1(\bC_v)$
with respect to the small model metric $\rd_v$ on $\sP^1(\bC_v)$
(see \eqref{eq:small} for the definition of $\sd_v$).

The $g$-{\itshape height} of
a $k$-algebraic zeros divisor $\cZ$ on $\bP^1(\overline{k})$
represented by a $P\in\bigcup_{d\in\bN}k[p_0,p_1]_d$ with respect to
an adelic continuous weight $g=\{g_v:v\in M_k\}$ is
\begin{gather}
 h_g(\cZ):=\sum_{v\in M_k}N_v\frac{M_{g_v}(P)}{\deg P},\label{eq:definingheight}
\end{gather}
where for every $v\in M_k$, $M_{g_v}(P)$ is the logarithmic
$g_v$-{\itshape Mahler measure} of the $P$
(see \eqref{eq:Mahler} for the definition of $M_{g_v}(P)$ and
\S\ref{sec:finiteness} for a proof of $h_g(\cZ)\in\bR$);
by \eqref{eq:product}, $h_g(\cZ)$ is well defined.
For every $v\in M_k$,
letting $\delta_\cS$ be the Dirac measure on
$\sP^1(\bC_v)$ at a point $\cS\in\sP^1(\bC_v)$,
a $k$-algebraic zeros divisor $\cZ$ on $\bP^1(\overline{k})$ is
regarded as a positive and discrete Radon measure
$\sum_{w\in\supp\cZ}(\ord_w\cZ)\delta_w$
on $\sP^1(\bC_v)$, which is still denoted by $\cZ$. Then
the {\itshape diagonal}
\begin{gather*}
 (\cZ\times\cZ)(\diag_{\bP^1(\overline{k})})=\sum_{w\in\supp\cZ}(\ord_w\cZ)^2
\end{gather*}
of $\cZ$ is independent of $v\in M_k$.
For a sequence $(\cZ_n)$ of $k$-algebraic zeros divisors on
$\bP^1(\overline{k})$ satisfying $\lim_{n\to\infty}\deg\cZ_n=\infty$,
we say $(\cZ_n)$
{\itshape has small $g$-heights
with respect to an adelic normalized weight $g$} 
if $\limsup_{n\to\infty}h_g(\cZ_n)\le 0$, and
say $(\cZ_n)$ {\itshape has small diagonals} if
$\lim_{n\to\infty}((\cZ_n\times\cZ_n)(\diag_{\bP^1(\overline{k})}))/(\deg\cZ_n)^2=0$.

\subsection{Quantitative adelic equidistribution of algebraic zeros divisors}
The following is one of our principal results:
for the Galois conjugacy class of an algebraic number,
this was due to Favre--Rivera-Letelier \cite[Th\'eor\`eme 7]{FR06}.
For the definitions of the $C^1$-regularity of a continuous test function
$\phi$ on $\sP^1(\bC_v)$,
the Lipschitz constant $\Lip(\phi)_v$ on $(\sP^1(\bC_v),\rd_v)$, and
the Dirichlet norm $\langle\phi,\phi\rangle_v$ of $\phi$
for each $v\in M_k$, see Section \ref{sec:quantitative}.

\begin{mainth}\label{th:adelicquantitative}
Let $k$ be a product formula field 
and $k_s$ the separable closure of $k$ in $\overline{k}$.
Let $g=\{g_v:v\in M_k\}$ be a placewise H\"older continuous adelic normalized weight.
Then for every $v\in M_k$, there is $C>0$ such that 
for every $k$-algebraic zeros divisor $\cZ$ on $\bP^1(k_s)$
and every test function $\phi\in C^1(\sP^1(\bC_v))$,
\begin{multline}
\left|
\int_{\sP^1(\bC_v)}\phi\rd\left(\frac{\cZ}{\deg\cZ}-\mu^g_v\right)
\right|
\le \\
C\cdot\max\{\Lip(\phi)_v,\langle\phi,\phi\rangle_v^{1/2}\}
\sqrt{\max\left\{h_g(\cZ),
(\log\deg\cZ)\frac{(\cZ\times\cZ)(\diag_{\bP^1(k_s)})}{(\deg\cZ)^2}\right\}}.\label{eq:quantitative}
\end{multline}
\end{mainth}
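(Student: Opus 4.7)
The plan is to adapt the energy method of Favre--Rivera-Letelier \cite{FR06} to the present setting, which requires handling three new features: atomic multiplicities in $\cZ$, a non-canonical adelic weight $g$, and Berkovich potential theory in arbitrary characteristic and at non-archimedean places. Fix $v\in M_k$, write $\nu_\cZ:=\cZ/\deg\cZ$ as a probability measure on $\sP^1(\bC_v)$, and fix a regularization parameter $\epsilon\in(0,1)$. Replace $\nu_\cZ$ by $\nu_{\cZ,\epsilon}$, obtained by spreading each atom $(m_w/\deg\cZ)\delta_w$ uniformly over the Berkovich closed ball of radius $\epsilon$ around $w$ (in the non-archimedean case, a single Dirac mass at the type-II point of that ball). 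Then $\nu_{\cZ,\epsilon}-\mu^g_v$ admits a continuous $g_v$-potential $u_\epsilon$, so that by integration by parts and Cauchy--Schwarz in the Dirichlet pairing,
$$\left|\int_{\sP^1(\bC_v)}\phi\,\rd(\nu_{\cZ,\epsilon}-\mu^g_v)\right|=\bigl|\langle\phi,u_\epsilon\rangle_v\bigr|\le\langle\phi,\phi\rangle_v^{1/2}\cdot I_v(u_\epsilon)^{1/2},$$
where $I_v(u_\epsilon):=\langle u_\epsilon,u_\epsilon\rangle_v$, and separately $|\int\phi\,\rd(\nu_\cZ-\nu_{\cZ,\epsilon})|\le\Lip(\phi)_v\cdot\epsilon$ from the definition of the Lipschitz constant.

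The arithmetic core of the argument is an upper bound on $I_v(u_\epsilon)$ in terms of $h_g(\cZ)$ plus a controlled diagonal correction. For the representative polynomial $P$ of $\cZ$ with leading coefficient $a$, writing $\log|P|_v=\log|a|_v+\sum_j\log|z-w_j|_v$ and expanding $M_{g_v}(P)$ via \eqref{eq:Mahler} expresses $M_{g_v}(P)/\deg P$ as the mutual energy of $\nu_\cZ$ and $\mu^g_v$ plus a leading-coefficient term. Summing over $v\in M_k$ with weights $N_v$ cancels the leading-coefficient terms by the product formula \eqref{eq:product} and, together with $V_{g_v}=0$, converts \eqref{eq:definingheight} into a global self-energy identity; its $\epsilon$-regularized form reads
$$\sum_{v\in M_k}N_v\cdot I_v(u_\epsilon)\le 2h_g(\cZ)+\frac{(\cZ\times\cZ)(\diag_{\bP^1(k_s)})}{(\deg\cZ)^2}\cdot\log\frac{1}{\epsilon}+O_g(1),$$
where the $\log(1/\epsilon)$ term encodes the self-interaction of the spread-out atomic masses. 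Since placewise H\"older continuity of $g$ furnishes uniform lower bounds $I_v(u_\epsilon)\ge -O_{g,v}(1)$ at every place, isolating the chosen $v$ gives a one-place bound of the same shape.

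Finally, take $\epsilon:=1/\deg\cZ$, so that $\log(1/\epsilon)=\log\deg\cZ$ reproduces exactly the factor on the right of \eqref{eq:quantitative}, while the Lipschitz error $\Lip(\phi)_v/\deg\cZ$ is dominated by the main term up to the absolute constant. Combining the Cauchy--Schwarz step with the one-place self-energy bound and $\sqrt{a+b}\le\sqrt{2\max\{a,b\}}$ yields \eqref{eq:quantitative} with some $C=C(g,v)$.

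The main obstacle I anticipate is the arithmetic self-energy identity. In the original setting of \cite{FR06} --- a number field, $g\equiv 0$, and $\cZ$ the Galois orbit of a separable algebraic number --- the derivation is essentially classical, whereas here one must carry the Berkovich regularization uniformly through both archimedean and non-archimedean places, verify that the hypothesis $\supp\cZ\subset\bP^1(k_s)$ suffices to avoid pathologies of non-separable algebraic points in positive characteristic, and extract from the placewise H\"older continuity of $g$ together with $V_{g_v}=0$ the uniform mutual-energy lower bounds needed at every place simultaneously.
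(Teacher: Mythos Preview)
Your outline follows the paper's adaptation of the Favre--Rivera-Letelier energy method, but the global step has a genuine gap. The displayed inequality $\sum_{v\in M_k}N_v\cdot I_v(u_\epsilon)\le 2h_g(\cZ)+\cdots$ cannot hold as written: regularizing at \emph{every} place produces a diagonal correction of order $(\log\frac{1}{\epsilon})\cdot(\cZ\times\cZ)(\diag_{\bP^1(k_s)})/(\deg\cZ)^2$ at each of the infinitely many $v\in M_k$, so no finite right-hand side of that shape exists. Likewise, to isolate the chosen place you need $I_w(u_\epsilon)\ge 0$ (positivity of the Dirichlet form --- no H\"older hypothesis is involved here), not merely $\ge -O_{g,w}(1)$, since summing an $O(1)$ deficit over infinitely many places again diverges. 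The paper instead runs the global identity on the \emph{unregularized} off-diagonal Fekete sums $(\cZ,\cZ)_{g_v}$ (Lemma~\ref{th:global}); the product formula \eqref{eq:product} is applied not only to the leading coefficient of $P$ but also to the discriminant-type quantity $D^*(\cZ|\overline{k})$, and \emph{this} is precisely where the hypothesis $\supp\cZ\subset\bP^1(k_s)$ enters, via Theorem~\ref{th:discriminant}, to guarantee $D^*(\cZ|\overline{k})\in k^*$. Regularization (Lemma~\ref{th:quantitativelocal}) is then performed only at the finite set $E_g\cup\{v_0\}$, and at the remaining places one uses $(\cZ,\cZ)_{g_v}\le 0$ (since $g_v\equiv 0$ there) to pass back to all of $M_k$ (Lemma~\ref{th:quantitativeFekete}).

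A smaller issue: with $\epsilon=1/\deg\cZ$ the H\"older error contributes $\sim(\deg\cZ)^{-1/\kappa_v}$ inside the energy bound, which for $\kappa_v>1$ is not dominated by $(\log\deg\cZ)/\deg\cZ$ and so cannot be absorbed into the right-hand side of \eqref{eq:quantitative}. The paper takes $\epsilon$ to be a sufficiently large negative power of $\deg\cZ$ (namely $(\deg\cZ)^{-2\kappa_{v_0}}$) so that the H\"older errors at the finitely many relevant places become $O((\deg\cZ)^{-2})$, while $\log(1/\epsilon)$ changes only by the constant factor $2\kappa_{v_0}$.
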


In Theorem \ref{th:adelicquantitative},
if $v\in M_k$ is an infinite place, 
or equivalently, $\bC_v\cong\bC$,
then the estimate \eqref{eq:quantitative} gives a quantitative estimate of the
{\itshape Kantorovich--Wasserstein metric} 
\begin{gather*}
 W\left(\frac{\cZ}{\deg\cZ},\mu^g_v\right)
 =\sup_{\phi}\left|
 \int_{\bP^1(\bC)}\phi\rd\left(\frac{\cZ}{\deg\cZ}-\mu^g_v\right)\right|
\end{gather*}
between the probability Radon measures $\cZ/\deg\cZ$ and $\mu^g_v$ on 
$\sP^1(\bC_v)\cong\bP^1(\bC)$,
where $\phi$ ranges over all Lipschitz continuous functions on $\bP^1(\bC)$
whose Lipschitz constants equal $1$ with respect to 
the normalized chordal metric $[z,w]$ on $\bP^1(\bC)$
(see Remark \ref{th:dense}). 
For the details of the metric $W$ including its role in
the optimal transportation problems, see, e.g., \cite{Villani09}.

The following is a qualitative version of Theorem \ref{th:adelicquantitative}:
for a sequence of Galois conjugacy classes of algebraic numbers, this was due to
Baker--Rumely \cite[Theorem 2.3]{BR06}, 
Chambert-Loir \cite[Th\'eor\`eme 4.2]{ChambertLoir06},
Favre--Rivera-Letelier \cite[Th\'eor\`eme 2]{FR06};
see also Szpiro--Ullmo--Zhang \cite{SUZ97}, 
Bilu \cite{Bilu97}, Rumely \cite{Rumely99}, Chambert-Loir
\cite{ChambertLoir00}, Autissier \cite{Autissier01}, 
Baker--Hsia \cite{BakerHsia05}, Baker--Rumely \cite{BR06}, 
Chambert-Loir \cite{ChambertLoir06},
Favre--Rivera-Letelier \cite{FR06}, and, ultimately, Yuan \cite{Yuan08}.

\begin{mainth}[asymptotically Fekete configuration of algebraic zeros divisors]\label{th:arith}
Let $k$ be a product formula field 
and $k_s$ the separable closure of $k$ in $\overline{k}$.
Let $g=\{g_v:v\in M_k\}$ be an adelic normalized weight. If
a sequence $(\cZ_n)$ of $k$-algebraic zeros divisors on $\bP^1(k_s)$ 
satisfying $\lim_{n\to\infty}\deg\cZ_n=\infty$
has both small diagonals
and small $g$-heights, then
for every $v\in M_k$, $(\cZ_n)$ is an 
asymptotically $g_v$-Fekete configuration on $\sP^1(\bC_v)$.
In particular,
$\lim_{n\to\infty}\cZ_n/\deg\cZ_n=\mu^g_v$ weakly on $\sP^1(\bC_v)$.
\end{mainth}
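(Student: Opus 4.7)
I would adapt the energy-minimization approach of Baker--Rumely \cite{BR06} and Favre--Rivera-Letelier \cite{FR06}, working place by place. Set $\mu_n := \cZ_n/\deg\cZ_n$, and for each $n$ choose a representative $P_n \in k[p_0,p_1]_{\deg\cZ_n}$ of $\cZ_n$, with leading coefficient $c_{P_n}\in k^*$.

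The first step is an adelic energy formula. A standard height-energy identity expresses, at each place $v$, the normalized logarithmic $g_v$-Mahler measure $2M_{g_v}(P_n)/(\deg\cZ_n)^2$ as the sum of the off-diagonal $g_v$-energy
$$
I^{\ne}_{g_v}(\mu_n) := \iint_{z\ne w} \bigl(-\log\sd_v(z,w) - g_v(z) - g_v(w)\bigr)\,\rd\mu_n(z)\rd\mu_n(w)
$$
and a leading-coefficient term linear in $\log|c_{P_n}|_v$. Weighting by $N_v$ and summing over $v\in M_k$, the product formula \eqref{eq:product} annihilates the leading-coefficient contributions, yielding
$$
2\,h_g(\cZ_n) = \sum_{v\in M_k} N_v\, I^{\ne}_{g_v}(\mu_n).
$$

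The second step is a local lower bound at each $v\in M_k$. I would regularize the singular kernel by truncating $-\log\sd_v$ above at scale $\epsilon>0$, producing a continuous kernel against which the equilibrium bound $V_{g_v}=0$ gives $\iint \rd\mu_n\otimes \rd\mu_n \ge -\eta_v(\epsilon)$ with $\eta_v(\epsilon)\to 0$ as $\epsilon\to 0$, uniformly in $\mu_n$. Replacing the truncated kernel by the exact off-diagonal one introduces an error controlled by $|\log\epsilon|$ times the normalized diagonal mass $(\cZ_n\times\cZ_n)(\diag_{\bP^1(k_s)})/(\deg\cZ_n)^2$. Choosing $\epsilon = \epsilon_n\to 0$ slowly enough and invoking the small-diagonal hypothesis yields a local bound $I^{\ne}_{g_v}(\mu_n) \ge -\varepsilon_n(v)$ with $\varepsilon_n(v)\to 0$ as $n\to\infty$.

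For the third step, combine the small-height hypothesis $\limsup_n h_g(\cZ_n)\le 0$ with the adelic formula from Step~1 and the local lower bounds from Step~2: the adelic sum of $N_v I^{\ne}_{g_v}(\mu_n)$ is asymptotically non-positive while every summand is asymptotically non-negative, forcing $\lim_n I^{\ne}_{g_v}(\mu_n) = 0 = V_{g_v}$ at every $v\in M_k$, which is precisely the asymptotically $g_v$-Fekete property. Weak convergence $\mu_n \to \mu^g_v$ then follows from lower semicontinuity of $I_{g_v}$ and the unique-minimizer characterization of $\mu^g_v$: applying the truncation argument to any weak accumulation point $\mu_\infty$ of $(\mu_n)$ gives $I_{g_v}(\mu_\infty) \le 0 = V_{g_v}$, forcing $\mu_\infty = \mu^g_v$.

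The principal difficulty lies in Step~2, where the near-diagonal correction must be quantified as $|\log\epsilon|$ times the normalized diagonal mass, uniformly over all relevant places (in particular at the infinitely many $v\notin E_g$, where the bound reduces to the classical discrete Fekete lower bound). A secondary subtlety in possibly non-separable characteristic is that $P_n$ may have zeros of multiplicity divisible by $\operatorname{char}(k)$; the small-diagonal hypothesis is precisely what absorbs these contributions, which enter directly into $(\cZ_n\times\cZ_n)(\diag_{\bP^1(k_s)})$.
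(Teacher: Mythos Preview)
Your approach is essentially the same as the paper's: an adelic identity relating $h_g(\cZ_n)$ to the sum over places of normalized Fekete sums, combined with one-sided local bounds at each place, forces the Fekete condition at every fixed $v_0$. Two remarks.

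First, your Step~1 identity is not quite as clean as stated. The local computation (the paper's Lemma~\ref{th:discrepancy}) shows that $2(\deg\cZ_n)M_{g_v}(P_n)$ differs from $-(\cZ_n,\cZ_n)_{g_v}$ not only by a term in $\log|L(P_n(1,\cdot))|_v$ but also by $\log|D^*(\cZ_n|\overline{k})|_v$, where $D^*$ is the pseudo-discriminant $\prod_{i\ne j}(z_i-z_j)^{d_id_j}$ over the distinct roots with their multiplicities, together with diagonal correction terms of total size $O\bigl((\cZ_n\times\cZ_n)(\diag)\bigr)$. To sum over $v$ and invoke the product formula you need $D^*(\cZ_n|\overline{k})\in k^*$, which is not obvious when $\cZ_n$ has repeated roots; this is the content of the paper's Theorem~\ref{th:discriminant} and is the genuine arithmetic input you are missing. (Your closing paragraph conflates this with multiplicities divisible by $\operatorname{char}(k)$; the issue is rather that $D^*$ is a priori only in $\overline{k}^*$, and one must show it is $k$-rational under the hypothesis $\supp\cZ_n\subset\bP^1(k_s)$.) Incidentally, for the equilibrium value to be $-V_{g_v}=0$ your kernel should read $-\log\sd_v(z,w)+g_v(z)+g_v(w)$, i.e.\ $-\Phi_{g_v}$; the signs on $g_v$ in your display are flipped.

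Second, your Step~2 is more elaborate than necessary. The paper does not regularize at all in the proof of Theorem~\ref{th:arith}: for $v\notin E_g$ one has $g_v\equiv 0$ and $\log[\cdot,\cdot]_{\can,v}\le 0$, so $(\cZ_n,\cZ_n)_{g_v}\le 0$ trivially; for the finitely many $v\in E_g$ one invokes the general upper bound $\limsup_n(\cZ_n,\cZ_n)_{g_v}/(\deg\cZ_n)^2\le V_{g_v}=0$ of \eqref{eq:Feketeupper}, valid for any small-diagonal sequence. Isolating $v_0$ and combining with the global identity \eqref{eq:globalkey} then gives $\liminf_n(\cZ_n,\cZ_n)_{g_{v_0}}/(\deg\cZ_n)^2\ge 0$ directly. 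Your $\epsilon$-truncation argument is correct in spirit and is precisely what the paper uses for the \emph{quantitative} Theorem~\ref{th:adelicquantitative}, but it is not needed for the qualitative statement here.
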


In Theorem \ref{th:arith},
the assertion that {\itshape $(\cZ_n)$ is
an asymptotically $g_v$-Fekete configuration on $\sP^1(\bC_v)$}
(see \eqref{eq:Feketeequiv} for the definition), which is also called a
$g_v$-{\itshape pseudo-equidistribution} on $\sP^1(\bC_v)$, 
is stronger than the final equidistribution assertion.
For a relationship between the Kantorovich--Wasserstein metric $W$ and 
$($asymptotically$)$ Fekete configurations
on complex manifolds, 
see Lev and Ortega-Cerd\`a \cite[\S 7]{LevOrtega12}. 
For a recent result on the {\itshape capacity and the transfinite diameter} 
on complex manifolds, see Berman--Boucksom \cite{BermanBoucksom10} 
(on $\bC^n$,
we also refer to the survey \cite{Levenberg10}) and
the {\itshape convergence of $($asymptotically$)$ Fekete points}
on complex manifolds,
see Berman--Boucksom--Nystr\"om \cite{BBN11}.

\subsection{Quantitative equidistribution in adelic dynamics}
For rational functions $f,a\in k(z)$ over a field $k$ and every $n\in\bN$, 
the divisor $[f^n=a]$
defined by the roots of the equation $f^n=a$ in $\bP^1(\overline{k})$ is
a $k$-algebraic zeros divisor on $\bP^1(\overline{k})$
if $f^n\not\equiv a$.

Let $k$ be a product formula field. For a rational function $f\in k(z)$
of degree $d>1$, let $\hat{g}_f:=\{g_{f,v}:v\in M_k\}$ be the {\itshape adelic
dynamical Green function} in that for every $v\in M_k$, $g_{f,v}$ is
the dynamical Green function of $f$ on $\sP^1(\bC_v)$, so that
$\mu_{f,v}:=\mu^{g_{f,v}}$ is the $f$-equilibrium (or canonical) measure on
$\sP^1(\bC_v)$ 
(see Section \ref{subsec:adelicGreen} for the details): the family $\hat{g}_f$ is indeed an adelic normalized weight, and the $\hat{g}_f$-height function $h_{\hat{g}_f}$
coincides with the Call-Silverman
$f$-{\itshape dynamical} (or canonical) height function. For every rational function $a\in k(z)$,
the sequence $([f^n=a])$ has {\itshape strictly small} 
$\hat{g}_f$-heights in that
$\limsup_{n\to\infty}(d^n+\deg a)\cdot h_{\hat{g}_f}([f^n=a])<\infty$
(Lemma \ref{th:heightsdynamics}). Hence the following 
are consequences of
Theorems \ref{th:adelicquantitative} and \ref{th:arith},
respectively.

\begin{mainth}\label{th:quantitativedynamics}
Let $k$ be a product formula field 
and $k_s$ the separable closure of $k$ in $\overline{k}$. 
Let $f\in k(z)$ be a rational function of degree $d>1$ and $a\in k(z)$
a rational function.
Then for every $v\in M_k$,
there exists a constant $C>0$ such that
for every test function $\phi\in C^1(\sP^1(\bC_v))$
and every $n\in\bN$,
\begin{multline}
\left|
\int_{\sP^1(\bC_v)}\phi\rd\left(\frac{[f^n=a]}{d^n+\deg a}-\mu_{f,v}\right)
\right|\\
\le 
C\cdot\max\{\Lip(\phi)_v,\langle\phi,\phi\rangle^{1/2}_v\}
\sqrt{\frac{n\cdot([f^n=a]\times[f^n=a])(\diag_{\bP^1(k_s)})}{(d^n+\deg a)^2}}\label{eq:error}
\end{multline}
if $f^n\not\equiv a$ and the divisor $[f^n=a]$ on $\bP^1(\overline{k})$ is on $\bP^1(k_s)$. 
\end{mainth}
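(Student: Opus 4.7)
The plan is to deduce the estimate from Theorem~\ref{th:adelicquantitative} applied to the $k$-algebraic zeros divisor $\cZ := [f^n = a]$ on $\bP^1(k_s)$ with respect to the adelic normalized weight $g := \hat{g}_f = \{g_{f,v}:v\in M_k\}$. First I would verify that $\hat{g}_f$ is \emph{placewise H\"older continuous}: this is the standard regularity of the dynamical Green function on $\sP^1(\bC_v)$, which follows uniformly in $v$ from the contracting nature of $f$ together with the functional equation relating $g_{f,v}\circ f$ and $d\cdot g_{f,v}$ up to an explicit coboundary, and is well documented both in the complex and in the Berkovich setting. Since $\deg\cZ = d^n + \deg a$ whenever $f^n\not\equiv a$, applying Theorem~\ref{th:adelicquantitative} to this $\cZ$ and this $g$ yields a bound of the form \eqref{eq:quantitative} in which the height factor is $h_{\hat{g}_f}([f^n=a])$.

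The core step is then to dominate the maximum under the square root in \eqref{eq:quantitative} by the single quantity $n\cdot([f^n=a]\times[f^n=a])(\diag_{\bP^1(k_s)})/(d^n+\deg a)^2$ appearing on the right of \eqref{eq:error}. For the logarithmic factor I use $\log\deg\cZ = \log(d^n+\deg a) \le C_1 n$ for some $C_1>0$ depending only on $d$ and $\deg a$, which handles the diagonal entry of the maximum directly. For the height entry I invoke Lemma~\ref{th:heightsdynamics}, which provides $(d^n+\deg a)\cdot h_{\hat{g}_f}([f^n=a]) \le C_2$ uniformly in $n$, together with the crude lower bound
\[
([f^n=a]\times[f^n=a])(\diag_{\bP^1(k_s)}) = \sum_{w\in\supp[f^n=a]}(\ord_w[f^n=a])^2 \ge \deg[f^n=a] = d^n + \deg a.
\]
Combining these two inequalities yields
\[
h_{\hat{g}_f}([f^n=a]) \le \frac{C_2}{d^n+\deg a} \le C_2\cdot\frac{n\cdot([f^n=a]\times[f^n=a])(\diag_{\bP^1(k_s)})}{(d^n+\deg a)^2},
\]
so both entries of the maximum are bounded by the right-hand side of \eqref{eq:error} up to an absolute multiplicative constant.

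Substituting these comparisons into \eqref{eq:quantitative} and absorbing the various constants into a single $C>0$ (depending on $v$, $f$, and $a$) produces \eqref{eq:error} for every $n\in\bN$ with $f^n\not\equiv a$ and $[f^n=a]$ supported in $\bP^1(k_s)$. The real obstacle lies upstream of this deduction: one must have on hand both the placewise H\"older regularity of $\hat{g}_f$ and the ``strictly small heights'' Lemma~\ref{th:heightsdynamics}, the latter being a Call--Silverman-type telescoping estimate on the dynamical heights of preimages under $f^n$. Granted those two inputs, the theorem reduces to the transparent comparison of the two terms inside the maximum carried out above.
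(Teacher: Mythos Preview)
Your proposal is correct and follows exactly the route the paper takes: the paper states that Theorem~\ref{th:quantitativedynamics} is a consequence of Theorem~\ref{th:adelicquantitative} applied with $g=\hat{g}_f$ together with Lemma~\ref{th:heightsdynamics}, and your write-up spells out precisely the comparison of the two entries of the maximum in \eqref{eq:quantitative} (via $\log(d^n+\deg a)\le C_1 n$ and the Cauchy--Schwarz-type lower bound $(\cZ\times\cZ)(\diag)\ge\deg\cZ$) that the paper leaves implicit. One small caveat: Lemma~\ref{th:heightsdynamics} literally asserts only $\limsup_{n\to\infty}(d^n+\deg a)h_{\hat g_f}([f^n=a])<\infty$, not a bound uniform in all $n$; the finitely many remaining $n$ are absorbed into the constant $C$ in the obvious way, so this does not affect your argument.
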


\begin{mainth}\label{th:qualitativedynamics}
Let $k$ be a product formula field 
and $k_s$ the separable closure of $k$ in $\overline{k}$. 
Let $f\in k(z)$ be a rational function of degree $d>1$ and
$a\in k(z)$ a rational function. 
If the sequence $([f^n=a])$ has small diagonals and 
the divisor $[f^n=a]$ is on $\bP^1(k_s)$ for every $n\in\bN$ large enough, then for every $v\in M_k$, 
$([f^n=a])$ is an asymptotically $g_{f,v}$-Fekete configuration on $\sP^1(\bC_v)$.
In particular, $\lim_{n\to\infty}[f^n=a]/(d^n+\deg a)=\mu_{f,v}$ weakly on 
$\sP^1(\bC_v)$.
\end{mainth}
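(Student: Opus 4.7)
The plan is to deduce Theorem \ref{th:qualitativedynamics} as a direct consequence of Theorem \ref{th:arith} applied to the adelic dynamical Green function $\hat{g}_f=\{g_{f,v}:v\in M_k\}$, which the excerpt asserts is an adelic normalized weight. So I would set $g:=\hat{g}_f$, set $\cZ_n:=[f^n=a]$ for every $n\in\bN$ large enough that $[f^n=a]$ is a $k$-algebraic zeros divisor on $\bP^1(k_s)$ (which is guaranteed by hypothesis once $n$ is large, since $\deg(f^n)=d^n\to\infty$ while $\deg a$ is fixed, so in particular $f^n\not\equiv a$ for $n$ large), and verify the three hypotheses of Theorem \ref{th:arith} one by one.

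First, $\deg\cZ_n=d^n+\deg a\to\infty$ as $n\to\infty$, since $d>1$. Second, the assumption that $([f^n=a])$ has small diagonals is built into the statement. Third, the sequence $([f^n=a])$ has small $\hat{g}_f$-heights: the excerpt explicitly promises Lemma \ref{th:heightsdynamics}, which asserts the stronger \emph{strict} smallness
\begin{gather*}
\limsup_{n\to\infty}(d^n+\deg a)\cdot h_{\hat{g}_f}([f^n=a])<\infty,
\end{gather*}
and dividing by $d^n+\deg a\to\infty$ immediately gives $\limsup_{n\to\infty}h_{\hat{g}_f}([f^n=a])\le 0$, which is precisely the small $\hat{g}_f$-heights condition needed in Theorem \ref{th:arith}.

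With all three hypotheses verified, Theorem \ref{th:arith} applies and yields, for every $v\in M_k$, both the asymptotic $g_{f,v}$-Fekete configuration property of $([f^n=a])$ on $\sP^1(\bC_v)$ and, as a further consequence already packaged into Theorem \ref{th:arith}, the weak convergence $\lim_{n\to\infty}[f^n=a]/(d^n+\deg a)=\mu^{\hat{g}_f}_v=\mu_{f,v}$ on $\sP^1(\bC_v)$. This delivers both conclusions of Theorem \ref{th:qualitativedynamics}.

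Since the heavy lifting is entirely done by Theorem \ref{th:arith} (the qualitative adelic equidistribution) and Lemma \ref{th:heightsdynamics} (the dynamical height control), the only step that requires any real work here is bookkeeping: confirming that the normalizing degree is $d^n+\deg a$ rather than $\deg([f^n=a])$ in some other convention, and matching the measure $\mu^{\hat{g}_f}_v$ with the canonical measure $\mu_{f,v}$ via the definition $\mu_{f,v}:=\mu^{g_{f,v}}$ recalled in the excerpt. I expect no genuine obstacle; the proof is essentially a one-line appeal once the framework is in place.
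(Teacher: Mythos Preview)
Your proposal is correct and matches the paper's own argument essentially verbatim: the paper states that once Lemma~\ref{th:heightsdynamics} is available, Theorem~\ref{th:arith} immediately yields Theorem~\ref{th:qualitativedynamics}, and does not write out any further details. Your bookkeeping checks (that $\hat{g}_f$ is an adelic normalized weight, that $\deg[f^n=a]=d^n+\deg a\to\infty$, and that $\mu^{\hat{g}_f}_v=\mu_{f,v}$) are exactly the identifications the paper makes in Section~\ref{subsec:adelicGreen}.
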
 

The final equidistribution assertion in Theorem \ref{th:qualitativedynamics}
has been established by Brolin \cite{Brolin},
Lyubich \cite{Lyubich83}, Freire-Lopes-Ma\~n\'e \cite{FLM83}
in complex dynamics, and
Favre--Rivera-Letelier \cite{FR09} in (not necessarily adelic)
non-archimedean dynamics (of characteristic $0$ when $\deg a>0$).
For every constant $a\in\bP^1(k)$, the estimate \eqref{eq:error} in
Theorem \ref{th:quantitativedynamics} has been obtained in
\cite[Theorems 4 and 5]{OkuFekete} in complex and
(not necessarily adelic) non-archimedean dynamics of characteristic $0$.
In complex dynamics, for every $f\in\bC(z)$ of degree $d>1$, every 
constant $a\in\bP^1(\bC)$,
and every $\phi\in C^2(\bP^1(\bC))$, a finer estimate
than \eqref{eq:error}
has been obtained by \cite[Theorem 2 together with (4.2)]{DOproximity}.

\subsection{Application to a motivating Question}

Let $K$ be an algebraically closed field that is complete with respect to a
non-trivial absolute value $|\cdot|$, and $[z,w]$ be
the normalized {\itshape chordal metric} on $\bP^1=\bP^1(K)$ (see \eqref{eq:chordaldist}). A subset $D$ in $\bP^1$ is called 
a {\itshape chordal disk}
(in $\bP^1$)
if $D=\{z\in\bP^1:[z,w]\le r\}$ for some $w\in\bP^1$ and some {\itshape radius}
$r\ge 0$.
Even in the specific case $a=\Id$
(see, e.g., Cremer \cite{Cremer28}, Siegel \cite{Siegel}, Brjuno \cite{Brjuno},
Herman--Yoccoz \cite{HY83},
Yoccoz \cite{Yoccoz88, Yoccoz96}, P\'erez-Marco \cite{PMar, PerezMarco01}),
which is one of the most interesting cases and
is related to {\itshape the difficulty of small denominators}
in non-archimedean and complex dynamics,
the following has not been completely understood:
\begin{question}
How uniformly close on a chordal disk $D$ of radius $>0$
can the sequence $(f^n)$ of the iterations 
of a rational function $f\in K(z)$ of degree $>1$
be to a rational function $a\in K(z)$ of degree $>0$?
\end{question}
For a study of this question
on the projective space $\bP^N(K)$, see \cite{OkuNonlinear}.
The following estimate of
the {\itshape local proximity sequence}
$(\sup_D[f^n,a]_v)$ is an application of Theorem \ref{th:quantitativedynamics}
to this Question in the setting of adelic dynamics.

\begin{mainth}\label{th:nonlinearity}
 Let $k$ be a product formula field of characteristic $0$.
 Let $f\in k(z)$ be a rational function 
 of degree $>1$ and $a\in k(z)$ a rational function of degree $>0$. 
 Then for every $v\in M_k$ and every chordal disk $D$ in $\bP^1(\bC_v)$
 of radius $>0$, as $n\to\infty$,
\begin{gather}  
 \log\sup_D[f^n,a]_v=O\left(\sqrt{n\cdot([f^n=a]\times[f^n=a])(\diag_{\bP^1(\overline{k})})}\right).
\label{eq:nonlin}
\end{gather}
Here, the implicit constant in $O(\cdot)$ possibly depends on $f$ and $a$.
\end{mainth}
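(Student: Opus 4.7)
The plan is to combine the Poincar\'e--Lelong-type representation of the local proximity function $u_n(z):=\log[f^n(z),a(z)]_v$ with the adelic quantitative equidistribution result (Theorem~\ref{th:quantitativedynamics}) applied to a test function tailored to the chordal disk $D$. Since the normalized chordal metric is bounded by $1$, $\sup_D u_n=\log\sup_D[f^n,a]_v\le 0$, and the task reduces to establishing the matching lower bound $\sup_D u_n\ge -C\sqrt{n\cdot([f^n=a]\times[f^n=a])(\diag_{\bP^1(\overline{k})})}$ as $n\to\infty$. The Poincar\'e--Lelong identity for the chordal metric yields, as a distribution on $\sP^1(\bC_v)$,
\[
\Delta u_n=[f^n=a]-(f^n)^*\Omega_{\can,v}-a^*\Omega_{\can,v},
\]
a signed Radon measure of total mass zero; equivalently, using $\int\log[z,w]_v\,d\Omega_{\can,v}(z)=0$ for each $w$, $u_n(z)=\int\log[z,w]_v\,d(\Delta u_n)(w)+\tilde c_n$ with $\tilde c_n:=\int u_n\,d\Omega_{\can,v}$.

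Next I would fix a probability measure $\rho_D$ supported in (the Berkovich closure of) $D$ whose logarithmic potential $\Psi_D(w):=\int\log[z,w]_v\,d\rho_D(z)$ lies in $C^1(\sP^1(\bC_v))$ with $\Lip(\Psi_D)_v$ and $\langle\Psi_D,\Psi_D\rangle_v^{1/2}$ depending only on $D$ and $v$; for instance a regularization of the equilibrium measure of $D$, so that $\Delta\Psi_D=\rho_D-\Omega_{\can,v}$ is a bounded signed measure. Since $\rho_D$ is supported in $D$ and $u_n$ is upper semicontinuous, $\sup_D u_n\ge\int u_n\,d\rho_D$. By Fubini and the representation of the previous paragraph,
\[
\int u_n\,d\rho_D=\tilde c_n+\int\Psi_D\,d[f^n=a]-\int\Psi_D\,d((f^n)^*\Omega_{\can,v})-\int\Psi_D\,d(a^*\Omega_{\can,v}).
\]
Lemma~\ref{th:heightsdynamics} gives $h_{\hat g_f}([f^n=a])=O((d^n+\deg a)^{-1})$, so Theorem~\ref{th:quantitativedynamics} with $\phi=\Psi_D$ yields
\[
\int\Psi_D\,d[f^n=a]=(d^n+\deg a)\int\Psi_D\,d\mu_{f,v}+O\!\left(\sqrt{n\cdot([f^n=a]\times[f^n=a])(\diag_{\bP^1(\overline{k})})}\right).
\]
Rewriting $(f^n)^*\Omega_{\can,v}=d^n\mu_{f,v}-\Delta(g_{f,v}\circ f^n)$ (from $\Delta g_{f,v}=\mu_{f,v}-\Omega_{\can,v}$ and the $f$-invariance $(f^n)^*\mu_{f,v}=d^n\mu_{f,v}$), integrating by parts against $\Psi_D$ and using the boundedness of $g_{f,v}$ on the compact Berkovich line, I get $\int\Psi_D\,d((f^n)^*\Omega_{\can,v})=d^n\int\Psi_D\,d\mu_{f,v}+O(1)$; the term $\int\Psi_D\,d(a^*\Omega_{\can,v})$ is trivially $O(1)$. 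Combining,
\[
\int u_n\,d\rho_D=\tilde c_n+O\!\left(\sqrt{n\cdot([f^n=a]\times[f^n=a])(\diag_{\bP^1(\overline{k})})}\right)+O(1).
\]

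The main technical obstacle is then the placewise bound $|\tilde c_n|=O(1)$ uniformly in $n$, i.e.\ that the averaged local proximity $\int\log[f^n,a]_v\,d\Omega_{\can,v}$ stays bounded at the single place $v$. I would prove it by separating the near-root and far-from-root contributions: the near-root part is controlled by a sublevel-set estimate $\Omega_{\can,v}(\{z\in\bP^1(\bC_v):[f^n(z),a(z)]_v<\varepsilon\})=O(\varepsilon^2)$ with constant uniform in $n$, coming from a Lyapunov-type lower bound on the derivative of $f^n-a$ near the roots and the packing structure of $\supp[f^n=a]$; the far-from-root part is bounded using the continuity of the adelic dynamical Green function $g_{f,v}$ and the strict smallness of the $\hat g_f$-heights (Lemma~\ref{th:heightsdynamics}), which forces a cancellation between the Jensen-type asymptotics of $\int\log|f^n-a|_v\,d\Omega_{\can,v}$ and $\int\log\|F^n\|_v\,d\Omega_{\can,v}$. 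Granted $|\tilde c_n|=O(1)$, the estimate above gives $\sup_D u_n\ge -C\sqrt{n\cdot([f^n=a]\times[f^n=a])(\diag_{\bP^1(\overline{k})})}-C'$, and the additive constant $C'$ is absorbed into the implicit constant by enlarging it to cover the finitely many small $n$, yielding the theorem with the implicit constant depending on $f,a,v,D$.
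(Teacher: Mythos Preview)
Your overall framework---express $\log[f^n,a]_v$ via a Poincar\'e--Lelong/Riesz-type formula and feed a test function into Theorem~\ref{th:quantitativedynamics}---is the right one, and it is essentially what the paper does. The genuine gap is exactly where you flag it: the bound $\tilde c_n=\int\log[f^n,a]_v\,\rd\Omega_{\can,v}=O(1)$ at a \emph{single} place. Your proposed argument does not establish this. The sublevel-set estimate $\Omega_{\can,v}(\{[f^n,a]_v<\varepsilon\})=O(\varepsilon^2)$ uniformly in $n$ fails in general: near a root $w$ of $f^n=a$ the local size of the sublevel set is governed by $|(f^n-a)'(w)|$, and when $a=\Id$ and $w$ lies in a Siegel disk with multiplier $\lambda$, this derivative is $|\lambda^n-1|$, which can be arbitrarily small along subsequences. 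In the non-archimedean case the estimate is not even well-posed, since $\Omega_{\can,v}=\delta_{\cS_{\can,v}}$ is a Dirac mass and $\tilde c_n=\log[f^n,a]_{\can,v}(\cS_{\can,v})$ is a single value, not an average amenable to a level-set decomposition. The ``far-from-root'' half of your argument appeals to the adelic smallness of heights, but that controls only $\sum_v N_v M_{g_{f,v}}(F^n\wedge A)$, not the placewise quantity you need.

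The paper avoids bounding $\tilde c_n$ altogether. It evaluates the Riesz decomposition \eqref{eq:rewrittenRiesz} at \emph{two} classical points $w_0\in D$ and $w_1\notin D$ and subtracts, so that the constant term $\int_{\sP^1(\bC_v)}\Phi(f^n,a)_{g_{f,v}}\rd\mu_{f,v}$ (your $\tilde c_n$ up to bounded corrections) cancels. The price is that one must produce a point $w_1$ with $\liminf_n\log[f^n(w_1),a(w_1)]_v>-\infty$; this is the content of Lemma~\ref{th:limit}, which uses the characteristic-$0$ structure of the quasiperiodicity region $\cE_f$ (Rivera-Letelier's iterative logarithm in the non-archimedean case) to locate a chordal disk $D'$ where the orbits of $f$ stay uniformly separated from $a(D')$. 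Once $w_0,w_1$ are chosen away from $\overline{\bigcup_n\supp[f^n=a]}\cup\sJ(f)_v$, Lemma~\ref{th:modify} replaces $\log[w_j,\cdot]_{\can,v}$ by a genuine $C^1$ test function agreeing with it on $\supp[f^n=a]\cup\supp\mu_{f,v}$, and Theorem~\ref{th:quantitativedynamics} finishes. If you want to salvage your route, you would have to prove a placewise bound on $\int\Phi(f^n,a)_{g_{f,v}}\rd\mu_{f,v}$ directly; that is not obviously easier than the paper's two-point trick, and in any case needs dynamical input comparable to Lemma~\ref{th:limit}.
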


In the case that $a=\Id$, we will see that
$([f^n=\Id]\times[f^n=\Id])(\diag_{\bP^1(\overline{k})})=O(d^n)$ as $n\to\infty$
in Section \ref{eq:rotation}.
Hence Theorem \ref{th:nonlinearity} concludes the following.

\begin{mainth}\label{th:identity}
Let $k$ be a product formula field of characteristic $0$.
Let $f\in k(z)$ be a rational function of degree $d>1$. 
Then for every $v\in M_k$ and every chordal disk $D$ in $\bP^1(\bC_v)$
of radius $>0$,
\begin{gather}
 \log\sup_D[f^n,\Id]_v=O(\sqrt{nd^n})\quad\text{as }n\to\infty.\label{eq:recurrence} 
\end{gather}
\end{mainth}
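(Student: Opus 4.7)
Since $\Id \in k(z)$ has degree $1 > 0$, Theorem \ref{th:nonlinearity} applies with $a = \Id$ and gives
$$
\log\sup_D [f^n,\Id]_v = O\Bigl(\sqrt{n\cdot ([f^n=\Id]\times[f^n=\Id])(\diag_{\bP^1(\overline{k})})}\Bigr).
$$
Hence \eqref{eq:recurrence} will follow at once from the algebraic estimate
$$
([f^n=\Id]\times[f^n=\Id])(\diag_{\bP^1(\overline{k})}) = O(d^n) \quad\text{as }n\to\infty,
$$
promised in the text just after the statement of Theorem \ref{th:identity}. The entire content of Theorem \ref{th:identity} therefore reduces to proving this diagonal bound.

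Writing $m_w(n) := \ord_w[f^n=\Id]$, the diagonal equals $\sum_w m_w(n)^2$, while the degree relation $\sum_w m_w(n) = d^n + 1$ holds. The trivial inequality $\sum_w m_w(n)^2 \le (\max_w m_w(n))\cdot(d^n + 1)$ therefore reduces the diagonal bound to showing that $\max_w m_w(n)$ is bounded by a constant depending only on $f$. Now $m_w(n) \ge 2$ forces $(f^n)'(w) = 1$, so any such $w$ is a periodic point of $f$ whose multiplier is a root of unity, i.e.\ $w$ lies on a parabolic cycle. For $w$ in a parabolic cycle of exact period $k$ with multiplier of order $q$, setting $g := f^{kq}$ and expanding $g(z) - z = c_w(z-w)^{\nu_w} + O((z-w)^{\nu_w+1})$ with $c_w \neq 0$, an elementary induction in characteristic $0$ (the cross-terms $\nu_w c_w(z-w)^{2\nu_w - 1}$ being absorbed into the error since $\nu_w \ge 2$) yields $g^m(z) - z = m c_w(z-w)^{\nu_w} + O((z-w)^{\nu_w+1})$. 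Consequently $m_w(n) = \nu_w$ whenever $kq \mid n$, and $m_w(n) = 1$ otherwise --- the multiplicity is intrinsic to the parabolic cycle and does not grow with $n$.

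The principal obstacle is then the uniform bound on $\nu_w$ as $w$ ranges over all parabolic periodic points of $f$. This amounts to the finiteness of the set of parabolic cycles of $f$, combined with the finiteness of each local multiplicity $\nu_w$: over $\bC$ this is a weak form of the classical Fatou--Shishikura bound on non-repelling cycles. Since the quantities $m_w(n)$ depend only on the algebraic structure of $f^n(z) - z$, one can embed a finitely generated field of definition of $f$ into $\bC$ and thereby reduce the whole estimate to the complex case. Granted this finiteness, $C_f := \max_w \nu_w$ is a finite constant depending only on $f$, and $\sum_w m_w(n)^2 \le C_f(d^n + 1) = O(d^n)$, establishing the diagonal bound and hence \eqref{eq:recurrence}.
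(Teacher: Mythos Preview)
Your proposal is correct and follows essentially the same approach as the paper: both apply Theorem~\ref{th:nonlinearity} with $a=\Id$, reduce the diagonal bound to a uniform bound on $\max_w m_w(n)$ via $\sum m_w(n)^2\le(\max_w m_w(n))(d^n+1)$, control the multiplicity at each multiple periodic point by a formal power series computation in characteristic~$0$, and invoke finiteness of such points over~$\bC$ together with a Lefschetz-type transfer. The only cosmetic differences are that the paper speaks of ``multiple periodic points'' rather than parabolic cycles and states the multiplicity bound as $\sup_n[f^n=\Id](\{w\})\le[f^{p_w}=\Id](\{w\})$ without spelling out the induction $g^m(z)-z=mc_w(z-w)^{\nu_w}+\cdots$ that you give explicitly.
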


\subsection{The unit $D^*(p)$}
The following generalizes the obvious fact that the discriminant of a
polynomial in one variable over a field $k$ is in $k$: 
the unit $D^*(p)$ plays an important role in non-separable case
and might have ever been studied, but we could find no literature.

\begin{mainth}\label{th:discriminant}
 Let $k$ be a field and $k_s$ the separable closure of $k$ in an algebraic closure $\overline{k}$ of $k$.
 For every $p(z)\in k[z]$ of degree $>0$, let $\{z_1,\ldots,z_m\}$
 be the set of all distinct zeros of $p(z)$ in $\overline{k}$ so that
 $p(z)=a\cdot\prod_{j=1}^m(z-z_j)^{d_j}$ in $\overline{k}[z]$
 for some $a\in k\setminus\{0\}$ and some sequence $(d_j)_{j=1}^m$ in $\bN$.
 If $\{z_1,\ldots,z_m\}\subset k_s$, then
\begin{gather*}
 D^*(p)
 :=\textstyle\prod_{j=1}^m\prod_{i:\, i\neq j}(z_j-z_i)^{d_id_j}\in k\setminus\{0\}, 
\end{gather*} 
where, {\rmfamily a priori}, this $D^*(p)$ is always
 in $\overline{k}\setminus\{0\}$.
\end{mainth}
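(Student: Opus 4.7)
The plan is to exploit Galois theory. Since $\{z_1,\ldots,z_m\}\subset k_s$, the product $D^*(p)$ lies a priori in $k_s\setminus\{0\}$, and the extension $k_s/k$ is Galois with $k_s^G=k$ for $G:=\operatorname{Gal}(k_s/k)$; hence it suffices to show that $D^*(p)$ is fixed by every $\sigma\in G$.

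First I would observe that every $\sigma\in G$ permutes the finite set $\{z_1,\ldots,z_m\}$ of roots of $p$ in $\overline{k}$ (equivalently in $k_s$) via some permutation $\pi=\pi_\sigma$ of $\{1,\ldots,m\}$, i.e., $\sigma(z_j)=z_{\pi(j)}$. The crucial step is to check that $\pi$ preserves multiplicities: $d_{\pi(j)}=d_j$ for every $j$. For this, applying $\sigma$ coefficient-wise to the factorization $p(z)=a\prod_{j=1}^m(z-z_j)^{d_j}$ (using $\sigma(a)=a$ since $a\in k$) yields $p(z)=a\prod_{j=1}^m(z-z_{\pi(j)})^{d_j}$; comparison with the original factorization via uniqueness of factorization in $\overline{k}[z]$ into pairwise distinct linear factors forces $d_j=d_{\pi(j)}$.

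Once multiplicity preservation is in hand, the $\sigma$-invariance of $D^*(p)$ is immediate:
\[
\sigma(D^*(p))=\prod_{j=1}^m\prod_{i:\,i\neq j}(z_{\pi(j)}-z_{\pi(i)})^{d_id_j},
\]
and reindexing $j'=\pi(j)$, $i'=\pi(i)$ together with $d_i=d_{i'}$ and $d_j=d_{j'}$ gives $\sigma(D^*(p))=D^*(p)$. Hence $D^*(p)\in k_s^G=k$; the nonvanishing follows from $z_i\neq z_j$ for $i\neq j$. I expect the main obstacle to be precisely this Galois-preservation of multiplicities in the possibly non-separable setting: a naive approach would try to invoke the squarefree part $\prod_{j=1}^m(z-z_j)$ of $p$, but this need not lie in $k[z]$ when $p$ is inseparable (for instance when $p(z)=(z-b)^{\mathrm{char}(k)}$ with $b\in k_s\setminus k$ and $b^{\mathrm{char}(k)}\in k$). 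The clean way out is to argue directly with the distinct roots $z_j\in k_s$ and apply uniqueness of factorization in $\overline{k}[z]$ as above, bypassing any need for the radical of $p$ to be rational over $k$.
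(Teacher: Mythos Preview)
Your proof is correct and takes a genuinely different route from the paper's. The paper factors $p=a\prod_{\ell=1}^N p_\ell^{s_\ell}$ into monic irreducibles over $k$, observes that under the hypothesis $\{z_1,\ldots,z_m\}\subset k_s$ each $p_\ell$ is separable (whence $p_\ell=\prod_{i:\ell(i)=\ell}(z-z_i)$ and $d_i=s_{\ell(i)}$), and then rewrites $D^*(p)$ as $\prod_{\ell}\bigl(R(p_\ell,p_\ell')^{s_\ell^2}\prod_{n\neq\ell}R(p_\ell,p_n)^{s_n s_\ell}\bigr)$, a product of resultants of polynomials in $k[z]$ and hence manifestly in $k$. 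Your Galois-invariance argument is shorter and more conceptual; the paper's is constructive and yields an explicit closed formula over $k$. One minor point on your side remark: under the standing hypothesis every irreducible factor of $p$ over $k$ is already separable, so the squarefree part $\prod_j(z-z_j)$ \emph{does} lie in $k[z]$, and your proposed counterexample is vacuous (an element simultaneously in $k_s\setminus k$ and purely inseparable over $k$ cannot exist); this separability of the $p_\ell$ is precisely what drives the paper's resultant computation.
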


\subsection{Organization of this article}
In Section  \ref{sec:background}, we recall a background
from potential theory
and arithmetic on the Berkovich projective line.
In Section \ref{sec:regularization}, 
we extend Favre--Rivera-Letelier's regularization 
$[\cdot]_\epsilon$ of discrete Radon measures
and establish required estimates
on them, and in Section \ref{sec:CS} we see the negativity of
regularized Fekete sums and a Cauchy-Schwarz inequality.
In Sections \ref{sec:algebraic} and \ref{sec:regularized},
we compute the $g$-Fekete sums $(\cZ,\cZ)_g$
and estimate the regularized $g$-Fekete sums $(\cZ_\epsilon,\cZ_\epsilon)_g$
with respect to a $k$-algebraic zeros divisor $\cZ$ on $\bP^1(\overline{k})$.
In Section \ref{sec:quantitative}, we show
Theorems \ref{th:adelicquantitative} and \ref{th:arith};
the arguments
are more or less adaptions of those in the proofs of
Favre--Rivera-Letelier \cite[Th\'eor\`eme 7]{FR06}
and Baker--Rumely \cite[Theorem 10.24]{BR10}, respectively.
In Section \ref{sec:dynamics}, we recall a background from
non-archimedean and complex dynamics. In Section \ref{subsec:adelicGreen},
we show Theorems \ref{th:quantitativedynamics} and \ref{th:qualitativedynamics},
and in Section \ref{eq:rotation} we show Theorems
\ref{th:nonlinearity} and \ref{th:identity}.
In Section \ref{sec:discriminant},
Theorem \ref{th:discriminant} is shown.

\section{Background from potential theory and arithmetic}\label{sec:background}

\begin{notation}
For a field $k$, the origin of $k^2$ is also denoted by $0=0_k$, and 
$\pi=\pi_k:k^2\setminus\{0\}\to\bP^1=\bP^1(k)$ is the canonical projection
so that $\pi(p_0,p_1)=p_1/p_0$ if $p_0\neq 0$ and that $\pi(0,1)=\infty$.
Set the wedge product $(z_0,z_1)\wedge(w_0,w_1):=z_0w_1-z_1w_0$ 
on $k^2\times k^2$.
\end{notation}

Let $K$ be an algebraically closed field that is complete with respect to a
non-trivial absolute value $|\cdot|$,
which is said to be {\itshape non-archimedean}
if the strong triangle inequality $|z+w|\le\max\{|z|,|w|\}$ on $K\times K$
holds, and to be {\itshape archimedean} unless $K$ is non-archimedean.
On $K^2$, let $\|(p_0,p_1)\|$ be either 
the maximal norm
$\max\{|p_0|,|p_1|\}$ 
(for non-archimedean $K$) 
or the Euclidean norm $\sqrt{|p_0|^2+|p_1|^2}$ (for archimedean $K$).
The {\itshape normalized chordal metric} $[z,w]$ on $\bP^1=\bP^1(K)$ is a function
\begin{gather}
 (z,w)\mapsto [z,w]=|p\wedge q|/(\|p\|\cdot\|q\|)\le 1\label{eq:chordaldist}
\end{gather}
on $\bP^1\times\bP^1$, where $p\in\pi^{-1}(z),q\in\pi^{-1}(w)$.
The metric topology on $\bP^1$ with respect to $[z,w]$ agrees with
the relative topology on $\bP^1$ from the {\itshape Berkovich} projective line
$\sP^1=\sP^1(K)$, which is a compact augmentation of $\bP^1$
and contains $\bP^1$ as a dense subset, and is isomorphic to $\bP^1$ if and only if
$K$ is archimedean (see \S\ref{sec:tree} for more details when $K$ is non-archimedean).
Letting $\delta_{\cS}$ be the Dirac measure on $\sP^1$ at a point $\cS\in\sP^1$,
set 
\begin{gather}
 \Omega_{\can}:=\begin{cases}
		\delta_{\cS_{\can}} & \text{for non-archimedean } K,\\
		\omega & \text{for archimedean } K,
	       \end{cases}\label{eq:spherical}
\end{gather}
where $\cS_{\can}$ is the canonical (or Gauss) point in $\sP^1$
for non-archimedean $K$ (see \S\ref{sec:tree} for the definition of it),
and $\omega$ is the Fubini-Study area element on $\bP^1$
normalized as $\omega(\bP^1)=1$ for archimedean $K$.
For non-archimedean $K$,
the {\itshape generalized Hsia kernel} $[\cS,\cS']_{\can}$ on $\sP^1$
with respect to $\cS_{\can}$ 
is the unique (jointly) upper semicontinuous and 
separately continuous extension of the normalized chordal metric
$[z,w]$ on $\bP^1(\times\bP^1)$ to $\sP^1\times\sP^1$ (see \eqref{eq:Gromov}
for more concrete description).
By convention,
for archimedean $K$, the kernel function $[\cS,\cS']_{\can}$ is defined by the $[z,w]$
itself.
Let $\Delta=\Delta_{\sP^1}$ be the distributional Laplacian on $\sP^1$ normalized
so that for each $\cS'\in\sP^1$, 
\begin{gather}
 \Delta\log[\cdot,\cS']_{\can}=\delta_{\cS'}-\Omega_{\can}\quad\text{on }\sP^1.\label{eq:Laplacian}
\end{gather}
For the construction of the Laplacian $\Delta$ in non-archimedean case,
see \cite[\S 5]{BR10}, \cite[\S7.7]{FJbook}, \cite[\S 3]{ThuillierThesis}
and also \cite[\S2.5]{Jonsson15}.

\subsection{Potential theory on $\sP^1$ with external fields}\label{th:potential}
For the foundation of the potential theory on the (Berkovich) projective line,
see Baker--Rumely \cite{BR10}, Favre--Rivera-Letelier \cite{FR09},
Thuillier \cite{ThuillierThesis}, and
also Jonsson \cite{Jonsson15} and Tsuji \cite[III \S11]{Tsuji59}
(\cite{ThuillierThesis} is on more general curves than lines
and \cite[III \S11]{Tsuji59} is on $\bP^1(\bC)$).
We also refer to Saff--Totik \cite{ST97} 
for the generalities of {\itshape weighted} potential
theory, i.e., logarithmic potential theory {\itshape with external fields}.

A {\itshape continuous weight $g$ on $\sP^1$} is
a continuous function on $\sP^1$ such that
\begin{gather*}
 \mu^g:=\Delta g+\Omega_{\can} 
\end{gather*}
is a probability Radon measure on $\sP^1$.
For a continuous weight $g$ on $\sP^1$, the $g$-{\itshape potential kernel} on $\sP^1$
(or the negative of an Arakelov Green kernel function on $\sP^1$ relative to 
$\mu^g$ \cite[\S 8.10]{BR10}) is the function
 \begin{gather}
 \Phi_g(\cS,\cS'):=\log[\cS,\cS']_{\can}-g(\cS)-g(\cS') 
\quad\text{on }\sP^1\times\sP^1,\label{eq:kernel} 
 \end{gather}
 and the {\itshape $g$-potential}
 of a Radon measure $\nu$ on $\sP^1$ is the function
 \begin{gather}
 U_{g,\nu}(\cdot):=\int_{\sP^1}\Phi_g(\cdot,\cS')\rd\nu(\cS')\quad\text{on }\sP^1;\label{eq:potential}
 \end{gather}
 by the Fubini theorem, $\Delta U_{g,\nu}=\nu-\nu(\sP^1)\mu^g$ on $\sP^1$.
The {\itshape $g$-equilibrium energy $V_g\in[-\infty,+\infty)$ of}
$\sP^1$ 
is the supremum of the {\itshape $g$-energy} functional
\begin{gather}
 \nu\mapsto\int_{\sP^1\times\sP^1}\Phi_g\rd(\nu\times\nu)=\int_{\sP^1}U_{g,\nu}\rd\nu\label{eq:functional}
\end{gather}
on the space of all probability Radon measures $\nu$ on $\sP^1$; 
indeed, $V_g>-\infty$ since
$V_g\ge \int_{\sP^1\times\sP^1}\Phi_g\rd(\Omega_{\can}\times\Omega_{\can})
>-\infty$. 
A probability Radon measure $\mu$ on $\sP^1$ at which
the $g$-energy functional \eqref{eq:functional} attains the supremum $V_g$
is called a {\itshape $g$-equilibrium mass distribution on} $\sP^1$;
indeed, $\mu^g$ is the unique $g$-equilibrium mass distribution on $\sP^1$, and
moreover, $U_{g,\mu^g}\equiv V_g$ on $\sP^1$
(for non-archimedean $K$, see \cite[Theorem 8.67, Proposition 8.70]{BR10}).
For a discussion on such a Gauss variational problem, 
see Saff--Totik \cite[Chapter 1]{ST97}.
A {\itshape normalized weight $g$ on} $\sP^1$ 
is a continuous weight on $\sP^1$ satisfying $V_g=0$;
for every continuous weight $g$ on $\sP^1$, $\overline{g}:=g+V_g/2$ is
the unique normalized weight on $\sP^1$ such that $\mu^{\overline{g}}=\mu^g$.

For a continuous weight $g$ on $\sP^1$ and
a Radon measure $\nu$ on $\sP^1$,
the {\itshape $g$-Fekete sum} with respect to $\nu$ is
\begin{gather*}
 (\nu,\nu)_g:=\int_{\sP^1\times\sP^1\setminus\diag_{\bP^1(K)}}\Phi_g\rd(\nu\times\nu),
\end{gather*} 
which generalizes
the classical {\itshape Fekete sum} associated with a finite subset in $\bC$
(see \cite{Fekete30, Fekete33});
if $\supp\nu$ is a discrete (so finite) subset in $\sP^1$, i.e.,
$\nu$ is a {\itshape discrete} measure on $\sP^1$, then
$(\nu,\nu)_g$ is always finite (even if $\supp\nu\subset\bP^1$). 

For a continuous weight $g$ on $\sP^1$, 
a sequence $(\nu_n)$ of positive and discrete Radon measures on $\sP^1$
satisfying $\lim_{n\to\infty}\nu_n(\sP^1)=\infty$ is called an
{\itshape asymptotically $g$-Fekete configuration on} $\sP^1$
if $(\nu_n)$ not only has {\itshape small diagonals} in that
$(\nu_n\times\nu_n)(\diag_{\bP^1(K)})=o(\nu_n(\sP^1)^2)$
as $n\to\infty$ but also satisfies
$\lim_{n\to\infty}(\nu_n,\nu_n)_g/(\nu_n(\sP^1))^2=V_g$;
under the former small diagonals condition,
the latter one is indeed equivalent to the weaker
\begin{gather}
 \liminf_{n\to\infty}\frac{(\nu_n,\nu_n)_g}{(\nu_n(\sP^1))^2}\ge V_g\label{eq:Feketeequiv}
\end{gather} 
since we always have
\begin{gather}
 \limsup_{n\to\infty}\frac{(\nu_n,\nu_n)_g}{(\nu_n(\sP^1))^2}\le V_g\label{eq:Feketeupper}
\end{gather}
(see, e.g., \cite[Lemma 7.54]{BR10}).
By a classical argument (cf.\ \cite[Theorem 1.3 in Chapter III]{ST97}), 
if $(\nu_n)$ is an asymptotically $g$-Fekete configuration on $\sP^1$, then
$\lim_{n\to\infty}\nu_n/\nu_n(\sP^1)=\mu^g$ weakly on $\sP^1$. 

\subsection{Local arithmetic on $\sP^1$}\label{sec:local}

Let $k$ be a field. 

\begin{definition}
 A field extension $K/k$ is
 an {\itshape algebraic and metric augmentation} of $k$
 if $K$ is algebraically closed and (topologically) complete with respect to
 a non-trivial absolute value $|\cdot|$
 (e.g., the $\bC_v$ is an algebraic and metric augmentation of a product formula
 field $k$ for every $v\in M_k$).
\end{definition}

For every $P\in\bigcup_{d\in\bN}k[p_0,p_1]_d$,
there is a sequence $(q_j^P)_{j=1}^{\deg P}$  
in $\overline{k}^2\setminus\{0\}$ giving a factorization
\begin{gather}
 P(p_0,p_1)=\prod_{j=1}^{\deg P}((p_0,p_1)\wedge q_j^P)\label{eq:factorization}
\end{gather}
of $P$ in $\overline{k}[p_0,p_1]$. Set 
$z_j^P:=\pi(q_j^P)\in\bP^1(\overline{k})$ for each $j\in\{1,2,\ldots,\deg P\}$. 
Although the sequence $(q_j^P)_{j=1}^{\deg P}$ is not unique, 
the sequence $(z_j^P)_{j=1}^{\deg P}$ in $\bP^1(\overline{k})$
is independent of choices of $(q_j^P)_{j=1}^{\deg P}$ up to permutations.
Let in addition $K$ be an algebraic and metric completion of $k$.
Then the sum
$M^\#(P):=\sum_{j=1}^{\deg P}\log\|q_j^P\|$
is also independent of choices of $(q_j^P)_{j=1}^{\deg P}$, and
for every continuous weight $g$ on $\sP^1=\sP^1(K)$,
the {\itshape logarithmic $g$-Mahler measure} of $P$ is
\begin{gather}
 M_g(P):=\sum_{j=1}^{\deg P}g(z_j^P)+M^\#(P).\label{eq:Mahler}
\end{gather}

The function $S_P:=|P(\cdot/\|\cdot\|)|$ on $K^2\setminus\{0\}$
descends to $\bP^1(K)$ and in turn extends continuously to $\sP^1$
so that 
$\log S_P=\sum_{j=1}^{\deg P}\log[\cdot,z_j^P]_{\can}+M^\#(P)$
on $\sP^1$, which is rewritten as
$\log S_P-(\deg P)g=\sum_{j=1}^{\deg P}\Phi_g(\cdot,z_j^P)+M_g(P)$
on $\sP^1$. Integrating both sides against $\rd\mu^g$ over $\sP^1$,
by $U_{g,\mu^g}\equiv V_g$ on $\sP^1$, we have
the following {\itshape Jensen-type} formula
\begin{gather}
 M_g(P)=\int_{\sP^1}\left(\log S_P-(\deg P)g\right)\rd\mu^g-(\deg P)V_g.\label{eq:formulaheights}
\end{gather}

\subsection{A Lemma on global arithmetic}\label{sec:finiteness}
Let $k$ be a product formula field. 
The proof of the following
would not rely on
a usual argument based on a field extension of $k$.

\begin{lemma}\label{th:finite}
For every $P\in\bigcup_{d\in\bN}k[p_0,p_1]_d$,
$\sum_{v\in M_k}N_v\cdot M^\#(P)_v\in\bR_{\ge 0}$.
\end{lemma}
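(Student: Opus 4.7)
The plan is to bypass any extension of the product formula to $\overline{k}$ and instead reduce the claim to a single application of \eqref{eq:product} to a $k$-rational scalar obtained as a specialization of $P$. To that end, write $P=\sum_{i=0}^d a_ip_0^{d-i}p_1^i$, set $i^*:=\max\{i:a_i\neq 0\}$, and decompose $P=p_0^{d-i^*}\tilde P$ with $\tilde P\in k[p_0,p_1]_{i^*}$; then $\tilde P(0,1)=a_{i^*}\in k\setminus\{0\}$. Choosing the representation $p_0=(p_0,p_1)\wedge(0,1)$ for each of the $d-i^*$ factors $p_0$ and concatenating with any factorization of $\tilde P$ in $\overline{k}[p_0,p_1]$, the independence of $M^\#(\cdot)_v$ from the chosen factorization, together with the identity $\|(0,1)\|_v=1$ (valid for both the non-archimedean max norm and the archimedean Euclidean norm), gives $M^\#(p_0^{d-i^*})_v=0$ and hence $M^\#(P)_v=M^\#(\tilde P)_v$ for every $v\in M_k$. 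In the edge case $i^*=0$, i.e.\ $P=a_0p_0^d$, the same idea, replacing one of the $(0,1)$'s by $(0,a_0)$, gives $M^\#(P)_v=\log|a_0|_v$ directly, and \eqref{eq:product} applied to $a_0\in k^*$ closes the argument; so I may assume $i^*\ge 1$ below.

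Next, fix a factorization $\tilde P=\prod_{j=1}^{i^*}((p_0,p_1)\wedge\tilde q_j)$ in $\overline{k}[p_0,p_1]$. Since $\tilde P(0,1)=(-1)^{i^*}\prod_j\tilde q_{j,0}\neq 0$, every first coordinate $\tilde q_{j,0}\in\overline{k}\setminus\{0\}$, and the pointwise coordinate bound $\|\tilde q_j\|_v\ge|\tilde q_{j,0}|_v$---which holds uniformly for both the archimedean Euclidean and the non-archimedean max norms---yields, after taking logarithms and summing over $j$,
\begin{equation*}
M^\#(\tilde P)_v\;\ge\;\sum_{j=1}^{i^*}\log|\tilde q_{j,0}|_v\;=\;\log|\tilde P(0,1)|_v\;=\;\log|a_{i^*}|_v.
\end{equation*}

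Finally, applying the product formula \eqref{eq:product} to the single scalar $a_{i^*}\in k^*$ gives $\sum_{v\in M_k}N_v\log|a_{i^*}|_v=0$, and combining this with the two previous steps yields
\begin{equation*}
\sum_{v\in M_k}N_v M^\#(P)_v\;=\;\sum_{v\in M_k}N_v M^\#(\tilde P)_v\;\ge\;\sum_{v\in M_k}N_v\log|a_{i^*}|_v\;=\;0,
\end{equation*}
as claimed. The only point requiring care is the bookkeeping of concatenated factorizations under $M^\#(\cdot)_v$, which is however immediate from its stated invariance under the choice of factorization. Notably, \eqref{eq:product} is invoked exactly once---on the single element $a_{i^*}\in k^*$---so the argument avoids any product formula on a proper extension of $k$.
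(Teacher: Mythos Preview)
Your argument establishes only half of the claim. The statement asserts $\sum_{v\in M_k}N_v M^\#(P)_v\in\bR_{\ge 0}$, i.e.\ that the sum is a \emph{finite} nonnegative real number. Your pointwise bound $M^\#(\tilde P)_v\ge\log|a_{i^*}|_v$ (which is exactly the paper's bound, since $a_{i^*}=L(P(1,\cdot))$) shows that $M^\#(P)_v\ge 0$ for all but finitely many $v$, hence that the negative part of the sum is finite and the total is $\ge 0$ in the extended sense. But you never bound the positive part: nothing you wrote rules out $\sum_v N_v M^\#(P)_v=+\infty$. This is a genuine issue because the $q_j^P$ lie in $\overline{k}^2$, not in $k^2$, so one cannot simply invoke \eqref{eq:product} to say $\|q_j^P\|_v=1$ for almost all $v$.

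The paper supplies precisely this missing half: writing $P=\sum a_{i,j}p_0^ip_1^j$ and taking $E_P$ to be the finite set of places where some nonzero $a_{i,j}$ has $|a_{i,j}|_v\neq 1$ (together with the archimedean places), one gets $|P(p)|_v\le\|p\|_v^{\deg P}$ on $\bC_v^2$ for every $v\notin E_P$ by the strong triangle inequality. Via the Jensen-type formula \eqref{eq:formulaheights} with the trivial weight this gives $M^\#(P)_v=\log S_{P,v}(\cS_{\can,v})\le 0$ for $v\notin E_P$, so the positive part of the sum is finite. Your lower-bound step is correct and matches the paper's first paragraph; you just need to add an upper-bound step of this kind to complete the proof.
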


\begin{proof}
Let $(q_j^P)_{j=1}^{\deg P}$ be a sequence in $\overline{k}^2\setminus\{0\}$
giving a factorization \eqref{eq:factorization} of $P$, and
$L(P(1,\cdot))\in k\setminus\{0\}$ be the 
coefficient of the maximal degree term of $P(1,z)\in k[z]$. 
For each $j\in\{1,2,\ldots,\deg P\}$, setting $q_j^P=((q_j^P)_0,(q_j^P)_1)$, 
we have $L(P(1,\cdot))=(-1)^{\deg P-\deg_{\infty}P}
(\prod_{j:\pi(q_j^P)=\infty}(q_j^P)_1)
(\prod_{j:\pi(q_j^P)\neq\infty}(q_j^P)_0)$ since 
\begin{gather*}
 q_j^P=\begin{cases}
	(q_j^P)_0\cdot(1,\pi(q_j^P)) & \text{if }\pi(q_j^P)\neq\infty,\\
	(q_j^P)_1\cdot(0,1) & \text{if }\pi(q_j^P)=\infty.
       \end{cases}
\end{gather*}
Hence
we first have $\sum_{v\in M_k}N_v\cdot M^\#(P)_v\ge 
\sum_{v\in M_k}N_v\log|L(P(1,\cdot))|_v=0$,
where the final equality is by \eqref{eq:product}.
For each $i,j\in\bN\cup\{0\}$ satisfying $i+j=\deg P$,
 if the coefficient $a_{i,j}\in k$ of the
 expansion
$P(p_0,p_1)=\sum_{i,j\in\bN\cup\{0\}:i+j=\deg P}a_{i,j}p_0^ip_1^j$
in $k[p_0,p_1]_{\deg P}$ does not vanish, then by \eqref{eq:product},
there is a finite subset $E_{i,j}$ in $M_k$ 
such that for every $v\in M_k\setminus E_{i,j}$, $|a_{i,j}|_v=1$. 
Set 
$E_P:=\{\text{infinite places of }k\}
\cup\bigcup_{i,j\in\bN\cup\{0\}:i+j=\deg P\text{ and }a_{i,j}\neq 0}E_{i,j}$.
For every $v\in M_k\setminus E_P$, by the strong triangle inequality, we have
$|P(p_0,p_1)|_v\le\max\{\max\{|p_0|_v,|p_1|_v\}^{i+j}:i,j\in\bN\cup\{0\},i+j=\deg P\}
 =\|(p_0,p_1)\|_v^{\deg P}$ 
on $\bC_v^2$, so that $\log S_{P,v}\le 0$ on $\bP^1(\bC_v)$
and in turn on $\sP^1(\bC_v)$.
Set $g^0:=\{g^0_v:v\in M_k\}$ such that 
$g^0_v\equiv 0$ on $\sP^1(\bC_v)$ for every $v\in M_k$,
which is an adelic continuous weight.
For every finite $v\in M_k$, 
$\mu^{g^0}_v=\delta_{\cS_{\can,v}}$ on $\sP^1(\bC_v)$ and
$V_{g^0_v}=\log[\cS_{\can,v},\cS_{\can,v}]_{\can,v}
=0$, so that
by the Jensen-type formula \eqref{eq:formulaheights},
we have $M^\#(P)_v=M_{g^0_v}(P)=\log S_{P,v}(\cS_{\can,v})
$. 
Hence for every $v\in M_k\setminus E_P$, 
$M^\#(P)_v\le 0$, 
so that
$\sum_{v\in M_k}N_v\cdot M^\#(P)_v<\infty$
by $\# E_P<\infty$.
\end{proof}

\section{Regularization of discrete Radon measures whose supports are in $\bP^1$}
\label{sec:regularization}

Let $K$ be an algebraically closed field complete with respect to
a non-trivial absolute value $|\cdot|$.

\subsection{The small model metric $\sd$ and the Hsia kernel $|\cS-\cS'|_{\infty}$}\label{sec:Hsia}
The kernel function $[\cS,\cS']_{\can}$ is not necessarily
a metric on $\sP^1=\sP^1(K)$: indeed, for every $\cS\in\sP^1$,
$[\cS,\cS]_{\can}$ vanishes if and only if $\cS\in\bP^1=\bP^1(K)$. 
The {\itshape small model metric} $\sd$ on $\sP^1$ is a function
\begin{gather}
 \sd(\cS,\cS'):=[\cS,\cS']_{\can}-\frac{[\cS,\cS]_{\can}+[\cS',\cS']_{\can}}{2}
 \quad\text{on }\sP^1\times\sP^1,\label{eq:small}
\end{gather}
which extends the normalized chordal metric $[z,w]$ on $\bP^1$
(but this $\sd$ does not induce the topology of $\sP^1$;
see \cite[\S2.7]{BR10}, \cite[\S4.7]{FR06} for more details).
On the other hand, the {\itshape Hsia kernel} $|\cS-\cS'|_{\infty}$
on the {\itshape Berkovich} affine line
$\sA^1=\sA^1(K)=\sP^1\setminus\{\infty\}$ is the function
\begin{gather}
 |\cS-\cS'|_{\infty}:=
 [\cS,\cS']_{\can}\cdot[\cS,\infty]_{\can}^{-1}\cdot[\cS',\infty]_{\can}^{-1}
 \quad\text{on }\sA^1\times\sA^1,\label{eq:Hsiaaffine}
\end{gather}
although the difference $\cS-\cS'$ itself is not defined unless both $\cS,\cS'\in K$ (for more details, see \cite[Chapter 4]{BR10}).
The kernel $|\cS-\cS'|_{\infty}$ is
the unique (jointly) upper semicontinuous and
separately continuous extension of  the function $|z-w|$ on $K\times K$
to $\sA^1\times\sA^1$.

\subsection{A little description of $\sP^1$ for non-archimedean $K$}\label{sec:tree}
Suppose that $K$ is non-archimedean.
A subset $B$ in $K$ is called a ($K$-closed) {\itshape disk} in $K$
if $B=\{z\in K:|z-a|\le r\}$ for some $a\in K$ and some {\itshape radius} $r\ge 0$.
By the strong triangle inequality, 
{\itshape two disks in $K$ either nest or are disjoint}.
This alternative extends to any two decreasing infinite sequences 
of disks in $K$ so that they either {\itshape infinitely nest}
or {\itshape are eventually disjoint}, and induces a so called {\itshape cofinal} 
equivalence relation among them.
\begin{example}
 Instead of
 giving
 a formal definition of the cofinal equivalence class $\cS$
 of a decreasing infinite sequence $(B_n)$ of disks in $K$,
 let us be practical: each $z\in K$ is
 regarded as
 the cofinal equivalence class of
 the constant
 sequence $(B_n)$ of the disks $B_n\equiv\{z\}$ in $K$ (of radii $\equiv 0$).
 More generally, for every cofinal equivalence class $\cS$
 of a decreasing infinite sequence $(B_n)$ of disks
 in $K$, $B_{\cS}:=\bigcap_{n\in\bN}B_n$
 is independent of choices of the representative $(B_n)$ of $\cS$,
 and if $B_{\cS}\neq\emptyset$, then
 $B_{\cS}$ is still a disk in $K$ and the $\cS$ is
 represented by
 the constant sequence $(\tilde{B}_n)$ of the disks $\tilde{B}_n\equiv B_{\cS}$ in $K$.
\end{example}

As a set,
the set of all cofinal equivalence classes $\cS$ of
decreasing infinite sequences 
$(B_n)$ of disks 
in $K$ and in addition $\infty\in\bP^1$ 
is nothing but $\sP^1$ (\cite[p.\ 17]{Berkovichbook}.
See also \cite[\S 2]{BR10}, \cite[\S 3]{FR06}, \cite[\S 6.1]{Benedetto10}):
for example, the {\itshape canonical $($or Gauss$)$} point $\cS_{\can}$ in $\sP^1$
is represented by the ring of $K$-integers $\cO_K:=\{z\in K:|z|\le 1\}$,
which is a disk in $K$.
The above alternative induces
a partial ordering $\succeq$ on $\sP^1$ so that
for every $\cS,\cS'\in\sP^1$ satisfying $B_{\cS},B_{\cS'}\neq\emptyset$, 
$\cS\succeq\cS'$ if and only if $B_{\cS}\supset B_{\cS'}$
(the description is a little complicate
when one of $B_{\cS},B_{\cS'}$ equals $\emptyset$). 
For every $\cS,\cS'\in\sP^1$ satisfying $\cS\succeq\cS'$, 
the {\itshape segment} between $\cS$ and $\cS'$ in $\sP^1$
is the set of all points $\cS''\in\sP^1$ satisfying $\cS\succeq\cS''\succeq\cS'$,
which can be equipped with either the ordering induced by
$\succeq$ on $\sP^1$ or its opposite one. All those (oriented) segments
make $\sP^1$ a {\itshape tree} in the sense of Jonsson
\cite[\S 2, Definition 2.2]{Jonsson15}.

For each $\cS\in\sP^1\setminus\{\infty\}$
represented by $(B_n)$, set 
\begin{gather*}
 \diam\cS:=\lim_{n\to\infty}\diam B_n(=\diam B_{\cS}
 \text{ if }\cB_{\cS}\neq\emptyset),
\end{gather*}
where $\diam B$ denotes the diameter of a disk $B$ in $K$
with respect to $|\cdot|$:
by convention, for $\cS=\infty$, we set $B_{\infty}:=K$ and $\diam\infty:=+\infty$.
The {\itshape hyperbolic space} is
$\sH^1=\sH^1(K):=\sP^1\setminus\bP^1=\{\cS\in\sP^1:\diam\cS\in(0,+\infty)\}$.
The {\itshape big model $($or hyperbolic$)$ metric} $\rho$ on $\sH^1$ is
a path metric on $\sH^1$ (but does not induce the relative topology of
$\sH^1$ induced by $\sP^1$) so that
for every $\cS,\cS'\in\sH^1$ satisfying $\cS\succeq\cS'$,
\begin{gather}
 \rho(\cS,\cS')=\log(\diam\cS/\diam\cS')\label{eq:hypdist}
\end{gather}
(see, e.g., \cite[\S 2.7]{BR10}).
In terms of $\rho$, the generalized Hsia kernel $[\cS,\cS']_{\can}$
with respect to $\cS_{\can}$ is interpreted as a Gromov product
\begin{gather}
 \log[\cS,\cS']_{\can}=-\rho(\cS'',\cS_{\can})\quad\text{on }\sH^1\times\sH^1,\label{eq:Gromov}
\end{gather}
where $\cS''$ is the unique point in $\sH^1$ lying between $\cS$ and $\cS'$,
between $\cS'$ and $\cS_{\can}$, and between $\cS_{\can}$ and $\cS$
(cf.\ \cite[\S 3.4]{FR06}). Similarly, for every $\cS,\cS'\in\sA^1$,
\begin{gather}
 |\cS-\cS'|_{\infty}=\diam\cS'',\label{eq:affineGromov}
\end{gather}
where $\cS''$ is the smallest point in $\sA^1$
satisfying both $\cS''\succeq\cS$ and $\cS''\succeq\cS'$
with respect to the partial ordering $\succeq$ on $\sP^1$. 

For every $\epsilon>0$, a continuous mapping
\begin{gather*}
 \pi_{\epsilon}:\sA^1\to\sA^1
\end{gather*}
is defined so that
for each $\cS\in\sA^1$, $\pi_{\epsilon}(\cS):=\cS''\in
\sH^1_\epsilon:=\{\cS\in\sP^1:\diam\cS\in[\epsilon,+\infty)\}$,
where $\cS''$
is the unique point between $\infty$ and $\cS$
satisfying $\diam\cS''=\max\{\epsilon,\diam\cS\}$ (see \cite[\S4.6]{FR06}
for the details).

\subsection{Regularization on $\sP^1$}

 When $K$ is archimedean, fix a non-negative
 smooth decreasing function 
 $\xi:[0,\infty)\to[0,1]$ such that $\supp\xi\subset[0,1]$ and
 that $\int_0^{\infty}\xi(x)\rd x=1$, and
 set $\xi_{\epsilon}(x):=\xi(x/\epsilon)/\epsilon$ on $[0,+\infty)$ for
 each $\epsilon>0$. For every $z\in K$ and every $\epsilon>0$,
 the {\itshape $\epsilon$-regularization} $[z]_{\epsilon}$ of $\delta_z$ is 
 the convolution $\xi_{\epsilon}*\delta_z$ on $\bP^1$, 
 i.e., for any continuous test function $\phi$ on $\bP^1$,
\begin{gather*}
(\xi_{\epsilon}*\delta_z)(\phi)=\int_0^{\epsilon}\xi_{\epsilon}(r)\rd r\int_0^{2\pi}\phi(z+re^{i\theta})\frac{\rd\theta}{2\pi}.
\end{gather*}
 When $K$ is non-archimedean,
 for every $z\in K$ and every $\epsilon>0$,
 the {\itshape $\epsilon$-regularization} $[z]_{\epsilon}$ of $\delta_z$ 
 is defined by 
 $[z]_{\epsilon}:=(\pi_{\epsilon})_*\delta_z=\delta_{\pi_{\epsilon}(z)}$
 on $\sP^1$ (\cite[p.\ 343]{FR06}).
 In both cases,
$[z]_{\epsilon}$ is a probability Radon measure on $\sP^1$,
the {\itshape chordal} potential
$\sP^1\ni \cS\mapsto \int_{\sP^1}\log[\cS,\cS']_{\can}\rd[z]_{\epsilon}(\cS')$
of $[z]_{\epsilon}$ is a continuous function on $\sP^1$,
and for every $z,w\in K$ and every $\epsilon>0$, the estimate
 \begin{gather}
 \int_{\sA^1\times\sA^1}\log|\cS-\cS'|_{\infty}\rd([z]_{\epsilon}\times [w]_{\epsilon})(\cS,\cS')
 \ge
 \begin{cases}
\log|z-w| & \text{if }z\neq w\\
 C_{\operatorname{abs}}+\log\epsilon & \text{if }z=w
 \end{cases}\label{eq:regularizationaffine}
 \end{gather}
 holds,
 where the constant $C_{\operatorname{abs}}\le 0$ is an absolute constant
 ({\cite[Lemmes 2.10, 4.11, and their proofs]{FR06}}).

Let us extend the $\epsilon$-regularization $[\cdot]_{\epsilon}$ and
the estimate \eqref{eq:regularizationaffine} to $\sP^1$.
Set $\iota(z):=1/z\in\PGL(2,k)$, which extends to an automorphism on $\sP^1$
(cf.\ Fact \ref{eq:action}), so that $\iota^2=\Id$ on $\sP^1$ and that
$[\iota(\cS),\iota(\cS')]_{\can}=[\cS,\cS']_{\can}$
on $\sP^1\times\sP^1$. For every $\epsilon>0$, set 
$[\infty]_{\epsilon}:=\iota_*[0]_{\epsilon}$.

For every $z\in\bP^1$ and every $\epsilon>0$,
\begin{gather}
\supp[z]_{\epsilon}
\subset\{\cS\in\sP^1:\sd(\cS,z)\le\epsilon\},\label{eq:supportsmall}
\end{gather}
which immediately follows from the definitions of
$|\cS-\cS'|_{\infty}$ (and \eqref{eq:affineGromov}),
$\sd$, and $[z]_{\epsilon}$ when $z\in K$, and from \eqref{eq:supportsmall}
applied to $z=0$ and the invariance of $\sd$ under $\iota$ when $z=\infty$. Moreover,
for every $z\in K$ and every $\epsilon>0$,
\begin{gather}
 \sup_{\cS\in[z]_{\epsilon}}|\log[\cS,\infty]_{\can}-\log[z,\infty]|\le\epsilon\label{eq:moduluschordal}
\end{gather}
by a direct computation of $\log[\cdot,\infty]_{\can}-\log[z,\infty]$
on $K$, the definition of $[z]_\epsilon$,
and the density of $K$ in $\sA^1$.

\begin{lemma}
Let $g$ be a continuous weight on $\sP^1$ 
having a modulus of continuity $\eta$ on $(\sP^1,\sd)$.
Then for every $\epsilon>0$
and every $z,w\in\bP^1$,
\begin{multline}
 \int_{\sP^1\times\sP^1}\Phi_g\rd([z]_{\epsilon}\times[w]_{\epsilon})\\
\ge
\begin{cases}
 \Phi_g(z,w)-2\epsilon-2\eta(\epsilon) & \text{if }z\neq w,\\
 C_{\operatorname{abs}}+\log\epsilon-2\epsilon+2\log[z,\infty]
-2\eta(\epsilon)-2g(z) & \text{if }z=w\in K,\\
 C_{\operatorname{abs}}+\log\epsilon-2\epsilon-2\eta(\epsilon)
 -2g(\infty) & \text{if }z=w=\infty.
 \end{cases}\label{eq:regularization} 
\end{multline}  
\end{lemma}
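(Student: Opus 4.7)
The plan is to decompose $\Phi_g(\cS,\cS') = \log[\cS,\cS']_{\can} - g(\cS) - g(\cS')$ and estimate the three summands separately against $[z]_\epsilon \times [w]_\epsilon$. The two $g$-integrals are immediate: by \eqref{eq:supportsmall}, $\supp[z]_\epsilon$ lies in the $\sd$-ball of radius $\epsilon$ around $z$, so the modulus of continuity $\eta$ of $g$ on $(\sP^1,\sd)$ yields $\int g\,\rd[z]_\epsilon \le g(z) + \eta(\epsilon)$. The contribution $-\int g\,\rd[z]_\epsilon - \int g\,\rd[w]_\epsilon \ge -g(z) - g(w) - 2\eta(\epsilon)$ is therefore common to all three cases, and the real content is to bound the chordal integral $\int\!\!\int \log[\cS,\cS']_{\can}\,\rd([z]_\epsilon\times[w]_\epsilon)$ from below.

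First I would treat $z, w \in K$ directly by applying the factorization coming from \eqref{eq:Hsiaaffine},
\[
\log[\cS,\cS']_{\can} = \log|\cS-\cS'|_\infty + \log[\cS,\infty]_{\can} + \log[\cS',\infty]_{\can},
\]
valid on $\sA^1\times\sA^1$: bound the first summand by \eqref{eq:regularizationaffine} and each of the two chordal summands by \eqref{eq:moduluschordal}. For $z \neq w$ the three lower bounds assemble to $\log|z-w| + \log[z,\infty] + \log[w,\infty] - 2\epsilon$, which by \eqref{eq:Hsiaaffine} itself equals $\log[z,w] - 2\epsilon$; for $z = w \in K$ the same scheme produces $C_{\operatorname{abs}} + \log\epsilon + 2\log[z,\infty] - 2\epsilon$. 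Combined with the $g$-part, these deliver the first two inequalities.

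Next I would handle $z = w = \infty$ by invoking the $\iota$-invariance $[\iota(\cS),\iota(\cS')]_{\can} = [\cS,\cS']_{\can}$ and the defining identity $[\infty]_\epsilon = \iota_*[0]_\epsilon$. Pushing both variables through $\iota$ turns the chordal integral into $\int\!\!\int \log[\cS,\cS']_{\can}\,\rd([0]_\epsilon\times[0]_\epsilon)$, which by the previous step (with $z=w=0$, and using $\log[0,\infty]=0$) is $\ge C_{\operatorname{abs}} + \log\epsilon - 2\epsilon$; adding $-2g(\infty) - 2\eta(\epsilon)$ from the $g$-part produces the third inequality.

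The hard part will be the remaining subcase of $z\neq w$ with exactly one of $z,w$ equal to $\infty$, since \eqref{eq:Hsiaaffine} is not available on the support of the product measure. My plan for this case is to use the $\iota$-conjugate factorization
\[
\log[\cS,\cS']_{\can} = \log|\iota(\cS)-\iota(\cS')|_\infty + \log[\cS,0]_{\can} + \log[\cS',0]_{\can}
\]
on $(\sP^1\setminus\{0\})^2$: for $z=\infty$, $w\in K^*$, and $\epsilon$ small, both supports avoid $0$. The two chordal factors are controlled by pushing forward through $\iota$ and applying the $\iota$-transformed version of \eqref{eq:moduluschordal}; the delicate piece is the affine integral, since $\iota_*[w]_\epsilon$ is not literally $[1/w]_\epsilon$ but is still a probability measure concentrated in an $\sd$-neighborhood of $1/w$, so a substitute for \eqref{eq:regularizationaffine} via a direct estimate of $\log|\tilde\cS-\tilde\cS'|_\infty$ near $(0,1/w)$ is required. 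The boundary case $w=0$ is handled symmetrically by exchanging the roles of $0$ and $\infty$ in the factorization. Combining these with the universal $g$-integral bound yields the first inequality in the remaining subcase and completes the proof.
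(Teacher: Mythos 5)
Your reduction to $g\equiv 0$ via \eqref{eq:supportsmall} and the modulus of continuity, your treatment of $z,w\in K$ via the factorization \eqref{eq:Hsiaaffine} combined with \eqref{eq:regularizationaffine} and \eqref{eq:moduluschordal}, and your treatment of $z=w=\infty$ via $[\infty]_\epsilon=\iota_*[0]_\epsilon$ all coincide with the paper's argument. The gap is exactly where you flag it: for $z=\infty$, $w\in K$ you never establish the lower bound on $\int\!\!\int\log|\tilde\cS-\tilde\cS'|_{\infty}\,\rd([0]_\epsilon\times\iota_*[w]_\epsilon)$, and that is the only genuinely nontrivial step of the lemma. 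A ``direct estimate near $(0,1/w)$'' does not deliver the stated error term: using only that $\iota_*[w]_\epsilon$ is concentrated near $1/w$, the triangle inequality produces an error of order $\epsilon/|w|$, which blows up as $w\to 0$ (and the chordal factor $\int\log[\cS',0]_{\can}\,\rd[w]_\epsilon$ is similarly delicate, since $\log[\cdot\,,0]_{\can}$ has no uniform modulus of continuity near $0$ and \eqref{eq:moduluschordal} does not transform into a statement about $[w]_\epsilon$ tested against $\log[\cdot\,,0]_{\can}$). Moreover your restriction ``$\epsilon$ small so that both supports avoid $0$'' makes $\epsilon$ depend on $w$, whereas the lemma is asserted for every $\epsilon>0$.

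The paper closes this case without your factorization. In the non-archimedean case, $[0]_\epsilon\times\iota_*[w]_\epsilon=\delta_{\pi_\epsilon(0)}\times\delta_{\iota(\pi_\epsilon(w))}$, and the Gromov-product formula \eqref{eq:Gromov} gives directly $\log[\pi_\epsilon(0),\iota(\pi_\epsilon(w))]_{\can}\ge\log[0,1/w]=\log[\infty,w]$, with no affine kernel involved. In the archimedean case, the exact mean-value identity $\int_0^{2\pi}\log|re^{i\theta}-a|\,\frac{\rd\theta}{2\pi}=\max\{\log|a|,\log r\}$ yields $\int\!\!\int\log|\cS-\cS'|_{\infty}\,\rd([0]_\epsilon\times\iota_*[w]_\epsilon)\ge-\int\log|\cS'-0|_{\infty}\,\rd[w]_\epsilon$, and this right-hand term cancels exactly against the identical term in the decomposition $\int\log[\cS',\infty]_{\can}\,\rd(\iota_*[w]_\epsilon)=\int\log|\cS'-0|_{\infty}\,\rd[w]_\epsilon+\int\log[\cS',\infty]_{\can}\,\rd[w]_\epsilon$; after the cancellation, \eqref{eq:moduluschordal} gives the uniform error $-2\epsilon$. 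You would need to supply one of these mechanisms (or an equivalent cancellation) for your outline to become a proof.
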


\begin{proof}
Since by definition $\Phi_g(\cS,\cS')=\log[\cS,\cS']_{\can}-g(\cS)-g(\cS')$
on $\sP^1\times\sP^1$, 
we can assume $g\equiv 0$ (and $\eta\equiv 0$) on $\sP^1$ without loss of
generality.
For every $z,w\in K$, 
by the definition \eqref{eq:Hsiaaffine} of $|\cS-\cS'|_{\infty}$
and \eqref{eq:moduluschordal},
\begin{align*}
 &\int_{\sP^1\times\sP^1}\log[\cS,\cS']_{\can}\rd([z]_{\epsilon}\times[w]_{\epsilon})(\cS,\cS')\\
\ge&\int_{\sA^1\times\sA^1}\log|\cS-\cS'|_{\infty}\rd([z]_{\epsilon}\times [w]_{\epsilon})(\cS,\cS')-2\epsilon+\log[z,\infty]+\log[w,\infty], 
\end{align*} 
which with the estimate \eqref{eq:regularizationaffine} 
yields \eqref{eq:regularization} (for $g\equiv\eta\equiv 0$) in this case.
The estimate \eqref{eq:regularization} (for $g\equiv\eta\equiv 0$)
in the case $z=w=\infty$
follows from $[\infty]_\epsilon=\iota_*[0]_\epsilon$,
$[\iota(\cS),\iota(\cS')]_{\can}=[\cS,\cS']_{\can}$, and
the estimate \eqref{eq:regularization}
for $z=w=0$.

It remains the case that $z=\infty$ and $w\in K$ (so $z\neq w$).
If $K$ is non-archimedean, then for every $w\in K$
and every $\epsilon>0$, by $[\infty]_\epsilon=\iota_*[0]_\epsilon$,
$[\iota(\cS),\iota(\cS')]_{\can}=[\cS,\cS']_{\can}$, and the interpretation
\eqref{eq:Gromov} of $[\cS,\cS']_{\can}$, we have
\begin{multline*} 
 \int_{\sP^1\times\sP^1}\log[\cS,\cS']_{\can}\rd([\infty]_{\epsilon}\times[w]_{\epsilon})(\cS,\cS')\\
=
\int_{\sP^1\times\sP^1}\log[\cS,\cS']_{\can}\rd([0]_{\epsilon}\times\iota_*[w]_{\epsilon})(\cS,\cS')
=\log[\pi_{\epsilon}(0),\iota(\pi_{\epsilon}(w))]_{\can}\\
 \ge\log[0,1/w]=\log[\infty,w]
 \ge \log[\infty,w]-2\epsilon,
\end{multline*}
which implies the estimate
\eqref{eq:regularization} (for $g\equiv\eta\equiv 0$)
in the case $z=\infty$ and $w\in K$ when $K$ is non-archimedean.
If $K$ is archimedean, then for every $w\in K$ and every $r,r'>0$,
\begin{multline*}
\int_0^{2\pi}\frac{\rd\theta}{2\pi}\int_0^{2\pi}\log\left|(0+re^{i\theta})-\frac{1}{w+r'e^{i\phi}}\right|\frac{\rd\phi}{2\pi}\\
=\int_0^{2\pi}\max\left\{-\log|w+r'e^{i\phi}|,\log r\right\}\frac{\rd\phi}{2\pi}
\ge
-\int_0^{2\pi}\log|(w+r'e^{i\phi})-0|\frac{\rd\phi}{2\pi},
\end{multline*}
so that for every $w\in K\cong\sA^1$ and every $\epsilon>0$,
\begin{multline*}
 \int_{\sA^1\times\sA^1}\log|\cS-\cS'|_{\infty}\rd([0]_{\epsilon}\times\iota_*[w]_{\epsilon})(\cS,\cS')\\
=\int_{\sA^1\times\sA^1}\log|\cS-\iota(\cS')|_{\infty}\rd([0]_{\epsilon}\times[w]_{\epsilon})(\cS,\cS')
\ge-\int_{\sA^1}\log|\cS'-0|_{\infty}\rd[w]_{\epsilon}(\cS'). 
\end{multline*}
On the other hand,
for every $w\in K$ and every $\epsilon>0$,
by the definition \eqref{eq:chordaldist}
of the chordal metric $[z,w]$ on $\bP^1\cong\sP^1$ (and $[0,\infty]=1$), 
\begin{multline*}
 \int_{\sP^1}\log[\cS',\infty]_{\can}\rd(\iota_*[w]_{\epsilon})(\cS')
=\int_{\sP^1}\log[\cS',0]_{\can}\rd[w]_{\epsilon}(\cS')\\
=\int_{\sA^1}\log|\cS'-0|_{\infty}\rd[w]_{\epsilon}(\cS')
+\int_{\sP^1}\log[\cS',\infty]_{\can}\rd[w]_{\epsilon}(\cS').
\end{multline*}
By these computations
and \eqref{eq:moduluschordal},
for every $w\in K$ and every $\epsilon>0$,
\begin{align*}
 &\int_{\sP^1\times\sP^1}\log[\cS,\cS']_{\can}\rd([\infty]_{\epsilon}\times[w]_{\epsilon})(\cS,\cS')\\
=&
\int_{\sP^1\times\sP^1}\log[\cS,\cS']_{\can}\rd([0]_{\epsilon}\times\iota_*[w]_{\epsilon})(\cS,\cS')\\
\ge&
\int_{\sP^1}\log[\cS,\infty]_{\can}\rd[0]_{\epsilon}(\cS)
+\int_{\sP^1}\log[\cS',\infty]_{\can}\rd[w]_{\epsilon}(\cS')\\
\ge&\log[0,\infty]+\log[w,\infty]-2\epsilon=\log[w,\infty]-2\epsilon,
\end{align*}
which implies the estimate
\eqref{eq:regularization} (for $g\equiv\eta\equiv 0$)
in the case $z=\infty$ and $w\in K$ when $K$ is archimedean.
\end{proof}

\section{The negativity of regularized Fekete sums and a Cauchy-Schwarz inequality}
\label{sec:CS}

Let $K$ be an algebraically closed field that is complete with respect to
a non-trivial absolute value $|\cdot|$.
For every $\epsilon>0$ and
every discrete measure $\nu$ on $\sP^1=\sP^1(K)$ whose support is in $\bP^1=\bP^1(K)$,
the {\itshape $\epsilon$-regularization} of $\nu$ is
\begin{gather*}
 \nu_{\epsilon}:=\sum_{w\in\supp\nu}\nu(\{w\})[w]_{\epsilon}\quad\text{on }\sP^1
\end{gather*}
and, for every continuous weight $g$ on $\sP^1$,
let us call $(\nu_\epsilon,\nu_\epsilon)_g$
the {\itshape $\epsilon$-regularized $g$-Fekete sum}
with respect to $\nu$.

\subsection{$C^1$-regularity and the Dirichlet norm}
Recall \S \ref{sec:tree} (a little description on $\sP^1$).
When $K$ is non-archimedean, 
 a function $\phi$ on $\sP^1=\sP^1(K)$ is {\itshape in} $C^1(\sP^1)$ if 
 (i) $\phi$ is continuous on $\sP^1$ and
 locally constant except for a union $\mathcal{T}$
 of at most finitely many segments in $\sH^1=\sH^1(K)$, 
 which are oriented by the partial ordering $\succeq$ on $\sP^1$
 and (ii) the derivative $\phi'$ 
 with respect to the length parameter 
 induced by the hyperbolic metric $\rho$ on each segment 
 in $\mathcal{T}$ exists and is continuous on $\mathcal{T}$. 
 The {\itshape Dirichlet norm} of 
 $\phi\in C^1(\sP^1)$ 
 is defined by $\langle\phi,\phi\rangle^{1/2}:=(\int_{\mathcal{T}}(\phi')^2\rd\rho)^{1/2}$, where $\rd\rho$ is the $1$-dimensional Hausdorff measure on $\sH^1$
 with respect to $\rho$ (for more details, see \cite[\S 5.5]{FR06}). 
 When $K$ is archimedean, the $C^1$-regularity and the Dirichlet norm
 of a function $\phi$ on $\sP^1\cong\bP^1$ is defined with respect to 
 the complex (or differentiable) structure of $\bP^1$.
 For completeness, we include a proof of the following.

\begin{proposition}\label{th:Lipschitz}
Every $\phi$ in $C^1(\sP^1)$ is Lipschitz continuous on $(\sP^1,\sd)$.
\end{proposition}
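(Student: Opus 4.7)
The plan is to handle the archimedean and non-archimedean cases separately, since the very definition of $C^1(\sP^1)$ differs between them. In the \textbf{archimedean} case, $\sP^1\cong\bP^1(\bC)$ is a compact smooth manifold and $\sd$ coincides with the normalized chordal metric $[\cdot,\cdot]$, which is bi-Lipschitz equivalent to the Fubini--Study Riemannian distance; so any $C^1$ function on this compact Riemannian manifold is automatically Lipschitz.

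For the \textbf{non-archimedean} case, let $\mathcal{T}$ denote the finite union of oriented segments in $\sH^1$ off which $\phi$ is locally constant, and set $M:=\sup_\mathcal{T}|\phi'|<\infty$. Since $\mathcal{T}$ is compact in $\sH^1$, $R_{\max}:=\sup_{\cS\in\mathcal{T}}\rho(\cS,\cS_\can)<\infty$, and in particular, applying \eqref{eq:Gromov} with $\cS'=\cS$ gives $[\cS,\cS]_\can=e^{-\rho(\cS,\cS_\can)}\ge e^{-R_{\max}}$ on $\mathcal{T}$. The pivotal local calculation I plan to carry out is the infinitesimal comparison
\[\rd\sd=\frac{[\cS(s),\cS(s)]_\can}{2}\,\rd s\]
along any segment of $\sH^1$ parameterized by $\rho$-arc length $s$, obtained by applying \eqref{eq:Gromov} to the triple $\cS(s),\cS(s+\rd s),\cS_\can$ and tracking the median under both possible orientations of the segment relative to $\cS_\can$ (both of which yield the same infinitesimal expression). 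Consequently $\rd\rho\le 2e^{R_{\max}}\,\rd\sd$ pointwise along $\mathcal{T}$.

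To conclude, I would invoke the structural fact that $(\sP^1,\sd)$ is a path-metric space in which $\sd$ is additive along the unique tree-geodesic $[\cS,\cS']$ between any two points, which follows from $\log[\cS,\cS']_\can=-\rho(\cS'',\cS_\can)$ by a case analysis on the position of $\cS_\can$ relative to the tree-path. Since $\phi$ is continuous on $\sP^1$ and piecewise $C^1$ along $[\cS,\cS']$ with $\phi'\equiv 0$ off $\mathcal{T}$, the fundamental theorem of calculus together with the infinitesimal comparison yields
\[|\phi(\cS)-\phi(\cS')|\le M\cdot(\rho\text{-length of }[\cS,\cS']\cap\mathcal{T})\le 2Me^{R_{\max}}\cdot(\sd\text{-length of }[\cS,\cS']\cap\mathcal{T})\le 2Me^{R_{\max}}\,\sd(\cS,\cS'),\]
giving Lipschitz continuity with $\Lip(\phi)\le 2Me^{R_{\max}}$. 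The main technical hurdle is verifying the infinitesimal identity $\rd\sd=([\cS,\cS]_\can/2)\,\rd\rho$ carefully in both orientations of the segment relative to $\cS_\can$; the remaining ingredients (compactness of $\mathcal{T}$ in $\sH^1$ and path-additivity of $\sd$ in the tree) are elementary and can be drawn from the references gathered in Section~\ref{sec:background}.
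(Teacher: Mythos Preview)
Your proof is correct and follows essentially the same strategy as the paper's: reduce to a comparison of $\sd$ and $\rho$ on the $\rho$-compact tree $\mathcal{T}$, using that $[\cS,\cS]_{\can}=e^{-\rho(\cS,\cS_{\can})}$ is bounded below there. Where you work infinitesimally via $\rd\sd=\tfrac{1}{2}[\cS,\cS]_{\can}\,\rd\rho$ and make the path-additivity of $\sd$ along tree-geodesics explicit, the paper instead writes down the integrated inequality $\sd(\cS,\cS')\ge\tfrac{\diam\cS'}{2}\rho(\cS,\cS')$ for $\cS_{\can}\succeq\cS\succeq\cS'$ (and the analogous one on the other side of $\cS_{\can}$) and leaves the passage from $\mathcal{T}$ to all of $\sP^1$ implicit.
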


\begin{proof}
When $K$ is archimedean, it is obvious. Suppose that $K$ is non-archimedean and
let $\phi\in C^1(\sP^1)$. By definition, $\phi$ is locally constant on $\sP^1$
except for a union $\mathcal{T}$ of at most finitely many segments in $\sH^1$
and is Lipschitz continuous on $\mathcal{T}$ with respect to $\rho$.
The set $\mathcal{T}$ is compact in $(\sH^1,\rho)$, and
for every $\cS,\cS'\in\sH^1$,
by the definition \eqref{eq:small} of $\sd$,
\eqref{eq:Gromov}, and \eqref{eq:hypdist},
if $\cS_{\can}\succeq\cS\succeq\cS'$, then
\begin{multline*}
  \sd(\cS,\cS')=\diam\cS-\frac{\diam\cS+\diam\cS'}{2}\\
=\frac{\diam\cS-\diam\cS'}{2}
\ge\frac{\diam\cS'}{2}\rho(\cS,\cS'),
\end{multline*} 
and similarly, if $\cS_{\can}\preceq\cS\preceq\cS'$, then
$\sd(\cS,\cS')\ge\rho(\cS,\cS')/(2\diam\cS')$.
Hence we conclude that $\phi$ is also Lipschitz continuous on $\mathcal{T}$ 
with respect to $\sd$, and in turn on the whole $\sP^1$ with respect to $\sd$.
\end{proof}
 
The Lipschitz constant of a Lipschitz continuous function $\phi$ 
on $(\sP^1,\sd)$ is denoted by $\Lip(\phi)$.

\begin{remark}\label{th:dense}
 When $K$ is archimedean (so $\sP^1\cong\bP^1$),
 for every $\phi\in C^1(\bP^1)$,
$\langle\phi,\phi\rangle^{1/2}\le\Lip(\phi)$. Moreover,
every Lipschitz continuous function $\phi$ on $(\bP^1,[z,w])$
is approximated by functions in $C^1(\bP^1)$ in the Lipschitz norm.
\end{remark}

\subsection{The negativity of $(\nu_\epsilon,\nu_\epsilon)_g$ and a Cauchy-Schwarz inequality}\label{sec:negative}
For every Radon measure $\mu$ on $\sP^1$
satisfying that $\mu(\sP^1)=0$, if the chordal potential 
$\cS\mapsto \int_{\sP^1}\log[\cS,\cS']_{\can}\rd\mu(\cS')$ 
of $\mu$ is continuous on $\sP^1$, then we have
the {\itshape positivity} $\int_{\sP^1\times\sP^1}
(-\log|\cS-\cS'|_{\infty})\rd(\mu\times\mu)(\cS,\cS')\ge 0$
(see \cite[\S 2.5 and \S 4.5]{FR06}) and, moreover,
the {\itshape Cauchy-Schwarz inequality}
\begin{gather}
\left|\int_{\sP^1}\phi\rd\mu\right|^2
 \le\langle\phi,\phi\rangle\cdot
 \int_{\sP^1\times\sP^1}(-\log|\cS-\cS'|_{\infty})
 \rd(\mu\times\mu)(\cS,\cS')\label{eq:CS}
\end{gather}
for every test function $\phi\in C^1(\sP^1)$ (see \cite[(32) and (33)]{FR06}).

In particular, for every $\epsilon>0$, every normalized weight $g$ on $\sP^1$,
every test function $\phi\in C^1(\sP^1)$, and
every discrete measure $\nu$ on $\sP^1$ whose support is in $\bP^1$,
the computation
 \begin{multline*}
0\le \int_{\sP^1\times\sP^1}(-\log|\cS-\cS'|_{\infty})
 \rd((\nu_{\epsilon}-(\nu(\sP^1))\mu^g)\times(\nu_{\epsilon}-(\nu(\sP^1))\mu^g))(\cS,\cS')\\
  =\int_{\sP^1\times\sP^1}(-\Phi_g)\rd((\nu_{\epsilon}-(\nu(\sP^1))\mu^g)\times(\nu_{\epsilon}-(\nu(\sP^1))\mu^g))
  =-(\nu_{\epsilon},\nu_{\epsilon})_g
\end{multline*}
(recalling $\Phi_{g,\mu^g}\equiv 0$ on $\sP^1$)
yields not only the {\itshape negativity}
$(\nu_{\epsilon},\nu_{\epsilon})_g\le 0$
but,
with the Cauchy-Schwarz inequality \eqref{eq:CS} 
and the triangle inequality, also the estimate
 \begin{multline}
  \left|\int_{\sP^1}\phi\rd\left(\nu-(\nu(\sP^1))\mu^g\right)\right|
=\left|\int_{\sP^1}\phi\rd\left((\nu-\nu_\epsilon)+(\nu_\epsilon-(\deg\nu)\mu^g)\right)\right|\\
  \le (\deg\nu)\Lip(\phi)\epsilon+
 \langle\phi,\phi\rangle^{1/2}\cdot(-(\nu_{\epsilon},\nu_{\epsilon})_g)^{1/2}.\label{eq:schwarz} 
 \end{multline}

\section{Computations of Fekete sums $(\cZ,\cZ)_g$}\label{sec:algebraic}

Let $k$ be a field.
For a $k$-algebraic zeros divisor $\cZ$ on $\bP^1(\overline{k})$, set
\begin{gather*}
 D^*(\cZ|\overline{k})
 :=\prod_{w\in\supp\cZ\setminus\{\infty\}}\prod_{w'\in\supp\cZ\setminus\{w,\infty\}}(w-w')^{(\ord_{w}\cZ)(\ord_{w'}\cZ)}\in\overline{k}\setminus\{0\},
\end{gather*}
which is indeed in $k\setminus\{0\}$ (by Theorem \ref{th:discriminant}).
For every $P\in\bigcup_{d\in\bN}k[p_0,p_1]_d$, let $L(P(1,\cdot))\in k\setminus\{0\}$
be the coefficient of the maximal degree term of $P(1,z)\in k[z]$
(appearing in \S\ref{sec:finiteness}).


\begin{lemma}\label{th:indep}
Let $k$ be a field.
Let $\cZ$ be a $k$-algebraic zeros divisor on $\bP^1(\overline{k})$
represented by $P\in\bigcup_{d\in\bN}k[p_0,p_1]_d$ and
$(q_j^P)_{j=1}^{\deg P}$
a sequence in $\overline{k}^2\setminus\{0\}$ 
giving a factorization \eqref{eq:factorization} of $P$, and
for each $j\in\{1,2,\ldots,\deg P\}$, 
set $q_j^P=((q_j^P)_0,(q_j^P)_0)$ and $z_j:=\pi(q_j^P)\in\bP^1(\overline{k})$.
If $(q_j^P)_{j=1}^{\deg P}$ is normalized with respect to a distinguished zero 
$w_0\in\bP^1(\overline{k})$ of $P$
so that for each $j\in\{1,2,\ldots,\deg P\}$,
\begin{gather}\label{eq:normalization}
\begin{cases}
 (q_j^P)_0=1 & \text{if }z_j\not\in\{w_0,\infty\},\\ 
 (q_j^P)_1=1 & \text{if }w_0\neq z_j=\infty,
\end{cases}
\end{gather}
then
\begin{gather}
L(P(1,\cdot))=(-1)^{\deg P-\deg_\infty P}\times
\begin{cases}
 \prod_{j:z_j=w_0}(q_j^P)_0 & \text{if }w_0\neq\infty,\\
 \prod_{j:z_j=w_0}(q_j^P)_1 & \text{if }w_0=\infty,
\end{cases}\label{eq:prodequi}
\end{gather}
and
\begin{multline}
 \prod_{j=1}^{\deg P}\prod_{i:z_i\neq z_j}(q_i^P\wedge q_j^P)\\
 =(-1)^{\deg_{\infty}P(\deg P-\deg_{\infty}P)}\cdot
L(P(1,\cdot))^{2(\deg P-\deg_{w_0}P)}
 \cdot D^*(\cZ|\overline{k}).\label{eq:planner}
\end{multline}
\end{lemma}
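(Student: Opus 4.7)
The plan is to compute each product by direct expansion using the normalization \eqref{eq:normalization}, partitioning the indices $j \in \{1, \ldots, \deg P\}$ according to whether $z_j$ equals $w_0$, equals $\infty$, or neither.

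For \eqref{eq:prodequi}, I would substitute $p_0 = 1$ in \eqref{eq:factorization}: each factor becomes $(q_j^P)_1$ if $z_j = \infty$ and $-(q_j^P)_0(z - z_j)$ otherwise, so $\deg P(1, \cdot) = \deg P - \deg_\infty P$ and
\[
L(P(1, \cdot)) = (-1)^{\deg P - \deg_\infty P}\prod_{j:\, z_j = \infty}(q_j^P)_1 \cdot \prod_{j:\, z_j \neq \infty}(q_j^P)_0.
\]
Applying \eqref{eq:normalization} then kills every factor on the right except those indexed by $\{j : z_j = w_0\}$, yielding \eqref{eq:prodequi} in both cases $w_0 \neq \infty$ and $w_0 = \infty$.

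For \eqref{eq:planner}, I would first assume $w_0 \neq \infty$ and partition $\{1, \ldots, \deg P\}$ into $A := \{j : z_j \notin \{w_0, \infty\}\}$, $B := \{j : z_j = w_0\}$, $C := \{j : z_j = \infty\}$. Setting $a_j := (q_j^P)_0$ for $j \in B$, \eqref{eq:normalization} lets me read off $q_i^P \wedge q_j^P$ explicitly on each of the seven type-pair subsets: $z_j - z_i$, $a_j(w_0 - z_i)$, $a_i(z_j - w_0)$, $1$, $-1$, $a_i$, $-a_j$ on $A\times A$, $A\times B$, $B\times A$, $A\times C$, $C\times A$, $B\times C$, $C\times B$ respectively, while the classes $B\times B$, $C\times C$, and the within-class repetitions in $A\times A$ are dropped by the constraint $z_i \neq z_j$. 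The $A\times A$ piece is $\prod_{\ell \neq \ell',\, w_\ell, w_{\ell'} \notin \{w_0, \infty\}}(w_{\ell'} - w_\ell)^{m_\ell m_{\ell'}}$, where $m_\ell$ is the multiplicity of the $\ell$-th distinct zero. Combined with the $(A\times B)(B\times A)$ contribution, the cross-terms $(w_0 - z_i)(z_i - w_0) = -(z_i - w_0)^2$ supply the missing $w_0$-crossing factors of $D^*(\cZ|\overline{k})$, and the sign $(-1)^{|A||B|}$ produced by those cross-terms exactly cancels the sign $(-1)^{|A||B|}$ that arises when the asymmetric $D^*(\cZ|\overline{k})$ is rewritten as the symmetric product $\prod(w_\ell - w_0)^{2 m_\ell m_0}$; this should give
\[
(A\times A)(A\times B)(B\times A) = D^*(\cZ|\overline{k})\cdot\Bigl(\prod_{j \in B}a_j\Bigr)^{2|A|}.
\]
The $(A\times C)(C\times A)$ block will contribute $(-1)^{|A||C|}$ and $(B\times C)(C\times B)$ will contribute $(\prod_{j \in B}a_j)^{2|C|}\cdot(-1)^{|B||C|}$. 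Combining these with $|A| + |B| + |C| = \deg P$, $|B| = \deg_{w_0}P$, $|C| = \deg_\infty P$, and finally substituting $\prod_{j \in B}a_j = (-1)^{\deg P - \deg_\infty P}L(P(1, \cdot))$ from \eqref{eq:prodequi} (the substituted sign squares away because of the even exponent $2(\deg P - \deg_{w_0}P)$), should yield \eqref{eq:planner}. The case $w_0 = \infty$ will be treated the same way with $B$ and $C$ merged into a single set $\{j : z_j = \infty\}$ of indices with $q_j^P = (0, (q_j^P)_1)$, running the analogous (slightly shorter) bookkeeping.

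The main obstacle is the sign accounting in the $(A\times A)(A\times B)(B\times A)$ block: the minus coming from $(w_0 - z_i)(z_i - w_0)$ and the minus needed to symmetrize the one-sided definition of $D^*(\cZ|\overline{k})$ must cancel at exactly this stage, leaving no residual sign. Once this cancellation is verified, the remaining signs from the $C$-blocks collapse cleanly to $(-1)^{|C|(|A|+|B|)} = (-1)^{\deg_\infty P(\deg P - \deg_\infty P)}$ by elementary counting, and \eqref{eq:planner} follows by the substitution from \eqref{eq:prodequi}.
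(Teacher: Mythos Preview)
Your proposal is correct and follows essentially the same direct-expansion approach as the paper, which first derives a normalization-free identity by splitting only on $\infty$ versus non-$\infty$ and then specializes via \eqref{eq:normalization}, whereas you partition into three classes $A,B,C$ with the normalization built in from the start. Your flagged sign obstacle in the $(A\times B)(B\times A)$ block is in fact a non-issue: the bidirectional factors $(w_0-z_i)^{m_0 m_i}(z_i-w_0)^{m_0 m_i}$ already coincide verbatim with the corresponding terms of $D^*(\cZ|\overline{k})$, so no sign cancellation is needed there.
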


\begin{proof}
 Without normalizing the sequence $(q_j^P)_{j=1}^{\deg P}$, by a direct
 computation,
\begin{multline}
 \prod_{j=1}^{\deg P}\prod_{i:z_i\neq z_j}(q_i^P\wedge q_j^P)\\
=\left(\prod_{j:z_j=\infty}\prod_{i:z_i\neq\infty}((q_i^P)_0(q_j^P)_1)\right)
 \times\left(\prod_{j:z_j\neq\infty}\prod_{i:z_i=\infty}(-(q_i^P)_1(q_j^P)_0)\right)\times\\
 \times\prod_{j:z_j\neq\infty}\prod_{i:z_i\not\in\{z_j,\infty\}}
((q_i^P)_0(q_j^P)_0(z_j-z_i))
=(-1)^{\deg_\infty P(\deg P-\deg_\infty P)}\times\\
\times\left(\prod_{j:z_j=\infty}\left((q_j^P)_1^{\deg P-\deg_{\infty}P}\cdot
\prod_{i:z_i\neq\infty}(q_i^P)_0\right)\right)^2\times\\
\times\left(\prod_{j:z_j\neq\infty}
\left((q_j^P)_0^{\deg P-\deg_\infty P-\deg_{z_j}P}
\cdot\prod_{i:z_i\not\in\{z_j,\infty\}}(q_i^P)_0\right)\right)
\cdot D^*(\cZ|\overline{k}).\label{eq:separate}
\end{multline} 
Let us normalize $(q_j^P)$ so that 
the normalization \eqref{eq:normalization} holds
with respect to a distinguished zero $w_0\in\bP^1(\overline{k})$ of $P$.
 Then \eqref{eq:prodequi} follows from
$L(P(1,\cdot))=
(-1)^{\deg P-\deg_\infty P}\cdot(\prod_{j:z_j=\infty}(q_j^P)_1)
(\prod_{j:z_j\neq\infty}(q_j^P)_0)$
and the normalization \eqref{eq:normalization}. 
Let us show \eqref{eq:planner}. 
If $w_0=\infty$, then under the normalization \eqref{eq:normalization},
the equality \eqref{eq:separate} yields
\begin{multline*}
\prod_{j=1}^{\deg P}\prod_{i:z_i\neq z_j}(q_i^P\wedge q_j^P)\\
=(-1)^{\deg_\infty P(\deg P-\deg_\infty P)}
\cdot
\left(\prod_{j:z_j=\infty}(q_j^P)_1\right)^{2(\deg P-\deg_{\infty}P)}
\cdot 1\cdot D^*(\cZ|\overline{k}),
\end{multline*}
which with \eqref{eq:prodequi} implies \eqref{eq:planner} when $w_0=\infty$. 
If $w_0\neq\infty$, then under the normalization \eqref{eq:normalization},
the equality \eqref{eq:separate} yields
\begin{multline*}
\prod_{j=1}^{\deg P}\prod_{i:z_i\neq z_j}(q_i^P\wedge q_j^P)
=(-1)^{\deg_{\infty}P(\deg P-\deg_{\infty}P)}
\cdot\left(\prod_{i:z_i=w_0}(q_i^P)_0\right)^{2\deg_{\infty}P}\times\\
\times
\left(\prod_{j:z_j=w_0}\left((q_j^P)_0^{\deg P-\deg_{\infty}P-\deg_{z_j}P}\cdot 1\right)\right)\cdot\left(\prod_{j:z_j\not\in\{w_0,\infty\}}
(1\cdot\prod_{i:z_i=w_0}(q_i^P)_0)\right)\times\\
\times D^*(\cZ|\overline{k})\\
=(-1)^{\deg_{\infty}P(\deg P-\deg_{\infty}P)}
\cdot\left(\prod_{i:z_i=w_0}(q_i^P)_0\right)
^{2\deg_{\infty}P+2(\deg P-\deg_{\infty}P-\deg_{w_0}P)}\times\\
\times D^*(\cZ|\overline{k}),
\end{multline*}  
which with \eqref{eq:prodequi}
implies \eqref{eq:planner} when $w_0\neq\infty$.
\end{proof}

\begin{lemma}[local computation]\label{th:discrepancy} 
Let $k$ be a field and $K$ an algebraic and metric augmentation of $k$
$($see \S $\ref{sec:local})$.
For every continuous weight $g$ on $\sP^1=\sP^1(K)$ and
every $k$-algebraic zeros divisor $\cZ$ on $\bP^1(\overline{k})$
represented by $P\in\bigcup_{d\in\bN}k[p_0,p_1]_d$, 
\begin{multline}
 (\cZ,\cZ)_g\\
 +2\cdot\sum_{w\in\supp\cZ\setminus\{\infty\}}(\ord_w\cZ)^2
\log[w,\infty]-2\cdot\sum_{w\in\supp\cZ}(\ord_w\cZ)^2g(w)\\
=2(\deg\cZ)\log|L(P(1,\cdot))|+\log|D^*(\cZ|\overline{k})|-2(\deg\cZ)M_g(P).\label{eq:discrepancystrong}
\end{multline}
\end{lemma}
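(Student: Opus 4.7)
My plan is to expand both sides of \eqref{eq:discrepancystrong} separately and reduce the identity to a single explicit formula for $M^\#(P)$ provided by Lemma \ref{th:indep}. Writing $m_w:=\ord_w\cZ$ and viewing $\cZ=\sum_w m_w\delta_w$ as a discrete measure on $\sP^1$, a direct expansion of $(\cZ,\cZ)_g$ using the definition \eqref{eq:kernel} of $\Phi_g$ and the symmetry of the double sum yields
\begin{gather*}
(\cZ,\cZ)_g=\sum_{w\neq w'}m_w m_{w'}\log[w,w']_{\can}-2\sum_{w}m_w(\deg\cZ-m_w)g(w),
\end{gather*}
where $\sum_{w'\neq w}m_{w'}=\deg\cZ-m_w$ has been used to isolate the contributions of $g$.

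Next, the identity $[w,w']=|w-w'|\cdot[w,\infty]\cdot[w',\infty]$ for $w,w'\in\bP^1(\overline{k})\setminus\{\infty\}$, which holds in both the archimedean and non-archimedean normalizations of $\|\cdot\|$, allows one to decompose the logarithmic double sum. Splitting the sum according to whether $w$ or $w'$ equals $\infty$, collecting the resulting $\log[w,\infty]$ terms, and recognizing the remaining contribution as $\log|D^*(\cZ|\overline{k})|$ gives
\begin{gather*}
\sum_{w\neq w'}m_w m_{w'}\log[w,w']_{\can}=\log|D^*(\cZ|\overline{k})|+2\sum_{w\neq\infty}m_w(\deg\cZ-m_w)\log[w,\infty].
\end{gather*}
Plugging this into the expansion of $(\cZ,\cZ)_g$ and simplifying, the left-hand side of \eqref{eq:discrepancystrong} becomes
\begin{gather*}
\log|D^*(\cZ|\overline{k})|+2(\deg\cZ)\sum_{w\neq\infty}m_w\log[w,\infty]-2(\deg\cZ)\sum_w m_w g(w).
\end{gather*}

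Since by \eqref{eq:Mahler} the right-hand side of \eqref{eq:discrepancystrong} equals
\begin{gather*}
2(\deg\cZ)\log|L(P(1,\cdot))|+\log|D^*(\cZ|\overline{k})|-2(\deg\cZ)\sum_w m_w g(w)-2(\deg\cZ)M^\#(P),
\end{gather*}
the entire identity reduces to establishing the single formula $M^\#(P)=\log|L(P(1,\cdot))|-\sum_{w\neq\infty}m_w\log[w,\infty]$. To prove this, I would take the normalized representative $(q_j^P)$ of $P$ from Lemma \ref{th:indep} against a fixed zero $w_0$ of $P$, so that $\log\|q_j^P\|=-\log[z_j^P,\infty]$ whenever $z_j^P\notin\{w_0,\infty\}$, and $\log\|q_j^P\|=0$ when $z_j^P=\infty\neq w_0$; the remaining factors at $z_j^P=w_0$ are then controlled, through the product formula \eqref{eq:prodequi}, by $L(P(1,\cdot))$ together with the correction $-m_{w_0}\log[w_0,\infty]$, the latter vanishing when $w_0=\infty$. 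The main obstacle is the bookkeeping in this last step, namely treating the cases $w_0=\infty$ and $w_0\neq\infty$ uniformly and invoking the fact that $M^\#(P)$ is insensitive to rescalings of the $q_j^P$'s preserving the product $P$, so that the normalization of Lemma \ref{th:indep} may be imposed without loss of generality. Once the formula for $M^\#(P)$ is established, the two expressions for the two sides of \eqref{eq:discrepancystrong} coincide and the identity follows.
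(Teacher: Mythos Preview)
Your argument is correct and complete; the bookkeeping you flag as an ``obstacle'' goes through exactly as you outline, in both the $w_0=\infty$ and $w_0\neq\infty$ cases, yielding the clean identity
\[
M^\#(P)=\log|L(P(1,\cdot))|-\sum_{w\neq\infty}m_w\log[w,\infty].
\]

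Your route differs from the paper's in its organization. The paper works with the lifts $q_j^P$ throughout: it expresses $(\cZ,\cZ)_g$ directly as $\log\bigl|\prod_{j}\prod_{i:z_i\neq z_j}(q_i^P\wedge q_j^P)\bigr|$ minus norm and weight terms, then invokes the full product identity \eqref{eq:planner} from Lemma~\ref{th:indep} to extract $D^*(\cZ|\overline{k})$ and $L(P(1,\cdot))$ simultaneously, and finally handles the residual $\log\|q_i^P\|$ sums by the normalization \eqref{eq:normalization} and \eqref{eq:prodequi}. You instead stay on $\bP^1$ and use the affine decomposition $[w,w']=|w-w'|\cdot[w,\infty]\cdot[w',\infty]$ to make $\log|D^*(\cZ|\overline{k})|$ appear immediately, reducing everything to the single scalar identity for $M^\#(P)$, which you then prove using only \eqref{eq:prodequi}. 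The upshot is that you bypass \eqref{eq:planner} entirely; in effect your chordal decomposition does the work that \eqref{eq:planner} does in the paper. Your approach has the advantage of isolating the formula for $M^\#(P)$ as a standalone fact, while the paper's approach keeps the computation uniform in the lift language and leans on the already-established \eqref{eq:planner}.
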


\begin{proof}
Let $\cZ$ be a $k$-algebraic zeros divisor on $\bP^1(\overline{k})$
represented by $P\in\bigcup_{d\in\bN}k[p_0,p_1]_d$ and
$(q_j^P)_{j=1}^{\deg P}$ be
a sequence
in $\overline{k}^2\setminus\{0\}$ giving a factorization 
\eqref{eq:factorization} 
of $P$ and satisfying the normalization \eqref{eq:normalization}
with respect to a distinguished zero $w_0\in\bP^1(\overline{k})$ of $P$, and 
set $z_j:=\pi(q_j^P)\in\bP^1(\overline{k})$ for each $j\in\{1,2,\ldots,\deg P\}$.
Since by definition
$\Phi_g(z,z')=\log[z,z']-g(z)-g(z')$ on $\bP^1(K)\times\bP^1(K)$,
we have
$(\cZ,\cZ)_g
=\log(\prod_{j=1}^{\deg P}\prod_{i:z_i\neq z_j}|q_i^P\wedge q_j^P|)
-2\cdot\sum_{j=1}^{\deg P}\sum_{i:z_i\neq z_j}(g(z_i)+\log\|q_i^P\|)$;
by \eqref{eq:planner},
$\log(\prod_{j=1}^{\deg P}\prod_{i:z_i\neq z_j}|q_i^P\wedge q_j^P|)
 =2(\deg P-\deg_{w_0}P)\log|L(P(1,\cdot))|+\log|D^*(\cZ|\overline{k})|$,
and we also have
\begin{align*}
 &\sum_{j=1}^{\deg P}\sum_{i:z_i\neq z_j}(g(z_i)+\log\|q_i^P\|)\\
=&\sum_{j=1}^{\deg P}\sum_{i=1}^{\deg P}(g(z_i)+\log\|q_i^P\|)
-\sum_{j=1}^{\deg P}\sum_{i:z_i=z_j}(g(z_i)+\log\|q_i^P\|)\\
=&(\deg P)M_g(P)-\sum_{j=1}^{\deg P}(\deg_{z_j}P)g(z_j)
-\sum_{j=1}^{\deg P}\sum_{i:z_i=z_j}\log\|q_i^P\|,
\end{align*}
where the final equality is by the definition \eqref{eq:Mahler} of $M_g(P)$.
Hence
\begin{multline*}
 (\cZ,\cZ)_g
=2(\deg P)\log|L(P(1,\cdot))|+\log|D^*(\cZ|\overline{k})|\\
-2(\deg P)M_g(P)+2\sum_{w\in\supp\cZ}(\ord_w\cZ)^2g(w)\\
-2\left((\deg_{w_0}P)\log|L(P(1,\cdot))|
-\sum_{j=1}^{\deg P}\sum_{i:z_i=z_j}\log\|q_i^P\|\right).
\end{multline*}
For each $j\in\{1,2,\ldots,\deg P\}$, also set $q_j^P=((q_j^P)_0,(q_j^P)_0)$.
If $\infty\not\in\supp\cZ$, then $w_0\neq\infty$, and
by the normalization \eqref{eq:normalization} and 
the equality \eqref{eq:prodequi},
\begin{multline*}
 (\deg_{w_0}P)\log|L(P(1,\cdot))|-\sum_{j=1}^{\deg P}\sum_{i:z_i=z_j}\log\|q_i^P\|\\
=-\sum_{j=1}^{\deg P}\sum_{i:z_i=z_j}(\log\|q_i^P\|-\log|(q_i^P)_0|)
=\sum_{j=1}^{\deg P}\sum_{i:z_i=z_j}\log[z_i,\infty]\\
 =\sum_{w\in\supp\cZ}(\ord_w\cZ)^2\log[w,\infty]
 =\sum_{w\in\supp\cZ\setminus\{\infty\}}(\ord_w\cZ)^2\log[w,\infty].
\end{multline*}
If $\infty\in\supp\cZ$, then 
we can set $w_0=\infty$,
and by the normalization \eqref{eq:normalization} and
the equality \eqref{eq:prodequi} (and $q_i^P=(q_i^P)_1\cdot (0,1)$ when $z_i=\infty$),
\begin{multline*}
 (\deg_{w_0}P)\log|L(P(1,\cdot))|
-\sum_{j=1}^{\deg P}\sum_{i:z_i=z_j}\log\|q_i^P\|\\
=
-\sum_{j:z_j=\infty}\sum_{i:z_i=z_j}(\log\|q_i^P\|-\log|(q_i^P)_1|)
-\sum_{j:z_j\neq\infty}\sum_{i:z_i=z_j}(\log\|q_i^P\|-\log|(q_i^P)_0|)\\
=
\sum_{j:z_j\neq\infty}\sum_{i:z_i=z_j}\log[z_i,\infty]
=\sum_{w\in\supp\cZ\setminus\{\infty\}}(\ord_w\cZ)^2\log[w,\infty].
\end{multline*}
Now the proof is complete.
\end{proof}

\begin{lemma}[global computation]\label{th:global}
 Let $k$ be a product formula field and $k_s$ the separable closure of $k$
 in $\overline{k}$.
Then for every adelic continuous weight $g=\{g_v:v\in M_k\}$
and every $k$-algebraic zeros divisor $\cZ$ on $\bP^1(k_s)$,
\begin{multline}
  \sum_{v\in M_k}N_v\left((\cZ,\cZ)_{g_v} 
+2\sum_{w\in\supp\cZ\setminus\{\infty\}}(\ord_w\cZ)^2\log[w,\infty]_v\right)\\
=-2(\deg\cZ)^2h_g(\cZ)
+2\sum_{v\in M_k}N_v\sum_{w\in\supp\cZ}(\ord_w\cZ)^2g_v(w).\label{eq:globalkey} 
\end{multline}
\end{lemma}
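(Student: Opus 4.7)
The plan is to sum the local identity \eqref{eq:discrepancystrong} of Lemma \ref{th:discrepancy}, applied at $K=\bC_v$ with weight $g_v$, against $N_v$ over $v\in M_k$, and to see that the two ``algebraic'' terms on the right-hand side collapse by the product formula \eqref{eq:product}, leaving only the $g$-height term and the $g_v(w)$ terms.

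More concretely, fix a representative $P\in\bigcup_{d\in\bN}k[p_0,p_1]_d$ of $\cZ$. For every $v\in M_k$, Lemma \ref{th:discrepancy} gives
\begin{multline*}
(\cZ,\cZ)_{g_v}+2\sum_{w\in\supp\cZ\setminus\{\infty\}}(\ord_w\cZ)^2\log[w,\infty]_v-2\sum_{w\in\supp\cZ}(\ord_w\cZ)^2 g_v(w)\\
=2(\deg\cZ)\log|L(P(1,\cdot))|_v+\log|D^*(\cZ|\overline{k})|_v-2(\deg\cZ)M_{g_v}(P).
\end{multline*}
Multiplying by $N_v$ and summing over $v\in M_k$, the right-hand side can be evaluated term by term. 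First, $L(P(1,\cdot))\in k\setminus\{0\}$ by definition, so \eqref{eq:product} yields $\sum_{v\in M_k}N_v\log|L(P(1,\cdot))|_v=0$. Second, by the hypothesis $\supp\cZ\subset\bP^1(k_s)$, Theorem \ref{th:discriminant} (applied to a polynomial whose roots are the affine points of $\supp\cZ$, or more precisely to the dehomogenisation of $P$ after moving a zero to $\infty$ if necessary) gives $D^*(\cZ|\overline{k})\in k\setminus\{0\}$, so \eqref{eq:product} again yields $\sum_{v\in M_k}N_v\log|D^*(\cZ|\overline{k})|_v=0$. Third, by the definition \eqref{eq:definingheight} of $h_g(\cZ)$ and $\deg P=\deg\cZ$, one has $\sum_{v\in M_k}N_v M_{g_v}(P)=(\deg\cZ)\,h_g(\cZ)$. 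Collecting these three identities produces exactly the right-hand side of \eqref{eq:globalkey}.

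The main point requiring care is the use of Theorem \ref{th:discriminant}: without the assumption $\supp\cZ\subset\bP^1(k_s)$, one would only know $D^*(\cZ|\overline{k})\in\overline{k}\setminus\{0\}$, and the product formula is not available there. Everything else is bookkeeping, once one observes that summability of the various global sums is guaranteed by Lemma \ref{th:finite}, by the finiteness of $E_g$ (so only finitely many $v$ contribute to the $g_v$-dependent terms), and by the product formula (so only finitely many $v$ contribute to $\log|L(P(1,\cdot))|_v$ and $\log|D^*(\cZ|\overline{k})|_v$). A final rearrangement moves the $g_v(w)$ sum back to the left-hand side to match the statement of \eqref{eq:globalkey}.
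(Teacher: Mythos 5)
Your proof is correct and follows essentially the same route as the paper: sum $N_v$ times the local identity \eqref{eq:discrepancystrong} over $v\in M_k$, kill the $\log|L(P(1,\cdot))|_v$ and $\log|D^*(\cZ|\overline{k})|_v$ terms via the product formula (the latter using Theorem \ref{th:discriminant} and the hypothesis that $\cZ$ is on $\bP^1(k_s)$), and identify $\sum_v N_v M_{g_v}(P)$ with $(\deg\cZ)h_g(\cZ)$. Your added remarks on summability are sound but not a departure from the paper's argument.
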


\begin{proof}
Let $P\in\bigcup_{d\in\bN}k[p_0,p_1]_d$ be a representative of $\cZ$.
Summing up $N_v\times$\eqref{eq:discrepancystrong} (for this $P$)
over all $v\in M_k$, we have
 \begin{multline*}
 \sum_{v\in M_k}N_v\biggl((\cZ,\cZ)_{g_v}
  +2\sum_{w\in\supp\cZ\setminus\{\infty\}}(\ord_w\cZ)^2\log[w,\infty]_v\\
  -2\cdot\sum_{w\in\supp\cZ}(\ord_w\cZ)^2g_v(w)\biggr)
 =-2(\deg\cZ)^2h_g(\cZ)
 \end{multline*}
by the product formula \eqref{eq:product}
(since $L(P(1,\cdot))\in k\setminus\{0\}$ and,
under the assumption that $\cZ$ is on $\bP^1(k_s)$, 
$D^*(\cZ|\overline{k})\in k\setminus\{0\}$)
and the definition \eqref{eq:definingheight} of $h_g(\cZ)$.
\end{proof}

\section{Estimates of regularized Fekete sums $(\cZ_\epsilon,\cZ_\epsilon)_g$}\label{sec:regularized}

\subsection{Local estimate}
Let $k$ be a field and $K$ an algebraic and metric augmentation of $k$.
Let $\cZ$ be a $k$-algebraic zeros divisor on $\bP^1(\overline{k})$,
which we regard as the Radon measure $\sum_{w\in\supp\cZ}(\ord_w\cZ)\delta_w$
on $\sP^1=\sP^1(K)$,
and 
$g$ be a continuous weight on $\sP^1$ 
such that $g$ is a $1/\kappa$-H\"older continuous function on $(\sP^1,\sd)$ 
for some $\kappa\ge 1$
having the $1/\kappa$-H\"older constant $C(g)\ge 0$.

\begin{lemma}\label{th:quantitativelocal}
For every $\epsilon>0$,
\begin{multline*}
 (\cZ_{\epsilon},\cZ_{\epsilon})_g\\
 \ge
(\cZ,\cZ)_g
+2\sum_{w\in\supp\cZ\setminus\{\infty\}}(\ord_w\cZ)^2\log[w,\infty]
-2\sum_{w\in\supp\cZ}(\ord_w\cZ)^2g(w)\\
+(C_{\operatorname{abs}}+\log\epsilon)\cdot(\cZ\times\cZ)(\diag_{\bP^1(\overline{k})})
-2(\deg\cZ)^2(\epsilon+C(g)\epsilon^{1/\kappa}).
\end{multline*}
\end{lemma}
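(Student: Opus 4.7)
The plan is to expand $(\cZ_\epsilon,\cZ_\epsilon)_g$ as a double sum indexed by $\supp\cZ\times\supp\cZ$ and apply the pointwise lower bound \eqref{eq:regularization} term by term, treating diagonal and off-diagonal contributions separately.

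First I would observe that $\cZ_\epsilon\times\cZ_\epsilon$ puts no mass on the diagonal $\diag_{\bP^1(K)}$: in the archimedean case each $[w]_\epsilon$ is the convolution of $\delta_w$ with the smooth mollifier $\xi_\epsilon$ and is absolutely continuous with respect to Lebesgue measure, while in the non-archimedean case $[w]_\epsilon=\delta_{\pi_\epsilon(w)}$ is supported in $\sH^1$, which is disjoint from $\bP^1$. Hence
\begin{gather*}
(\cZ_\epsilon,\cZ_\epsilon)_g=\sum_{w,w'\in\supp\cZ}(\ord_w\cZ)(\ord_{w'}\cZ)\int_{\sP^1\times\sP^1}\Phi_g\,\rd([w]_\epsilon\times[w']_\epsilon),
\end{gather*}
which I would split into the off-diagonal part ($w\neq w'$) and the diagonal part ($w=w'$).

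The H\"older hypothesis supplies $\eta(x):=C(g)x^{1/\kappa}$ as a modulus of continuity of $g$ on $(\sP^1,\sd)$, so the lower bound \eqref{eq:regularization} is applicable. For each off-diagonal pair, its first case yields $\Phi_g(w,w')-2\epsilon-2C(g)\epsilon^{1/\kappa}$; summing $(\ord_w\cZ)(\ord_{w'}\cZ)$ times these reproduces $(\cZ,\cZ)_g$ and incurs the additional cost
\begin{gather*}
-2(\epsilon+C(g)\epsilon^{1/\kappa})\bigl((\deg\cZ)^2-(\cZ\times\cZ)(\diag_{\bP^1(\overline{k})})\bigr).
\end{gather*}
For the diagonal part I would apply the second case of \eqref{eq:regularization} when $w\in K$ and the third case when $w=\infty$, each weighted by $(\ord_w\cZ)^2$; using the identity $(\cZ\times\cZ)(\diag_{\bP^1(\overline{k})})=\sum_{w\in\supp\cZ}(\ord_w\cZ)^2$, this produces
\begin{multline*}
(C_{\operatorname{abs}}+\log\epsilon-2\epsilon-2C(g)\epsilon^{1/\kappa})\cdot(\cZ\times\cZ)(\diag_{\bP^1(\overline{k})})\\
+2\sum_{w\in\supp\cZ\setminus\{\infty\}}(\ord_w\cZ)^2\log[w,\infty]-2\sum_{w\in\supp\cZ}(\ord_w\cZ)^2g(w).
\end{multline*}

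Adding the off-diagonal and diagonal contributions, the two coefficients of $-2(\epsilon+C(g)\epsilon^{1/\kappa})$ combine into $-2(\deg\cZ)^2(\epsilon+C(g)\epsilon^{1/\kappa})$, delivering the claimed inequality. There is no serious obstacle: the argument is essentially bookkeeping once \eqref{eq:regularization} is at hand. The only points requiring care are the initial observation that $\cZ_\epsilon\times\cZ_\epsilon$ gives zero mass to $\diag_{\bP^1(K)}$ (so the full integral over $\sP^1\times\sP^1$ coincides with the regularized Fekete sum), and the bookkeeping that the off-diagonal and diagonal multiplicity counts sum exactly to $(\deg\cZ)^2$.
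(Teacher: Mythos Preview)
Your proposal is correct and follows essentially the same approach as the paper: both expand $(\cZ_\epsilon,\cZ_\epsilon)_g$ as a double sum over $\supp\cZ\times\supp\cZ$, split into diagonal and off-diagonal pieces, apply the three cases of \eqref{eq:regularization} with $\eta(\epsilon)=C(g)\epsilon^{1/\kappa}$, and recombine the multiplicity counts using $\sum_{w}(\ord_w\cZ)^2+(\text{off-diagonal total})=(\deg\cZ)^2$. Your explicit remark that $\cZ_\epsilon\times\cZ_\epsilon$ puts no mass on $\diag_{\bP^1(K)}$ is a point the paper uses tacitly when it writes $(\cZ_\epsilon,\cZ_\epsilon)_g=\int_{\sP^1\times\sP^1}\Phi_g\,\rd(\cZ_\epsilon\times\cZ_\epsilon)$.
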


\begin{proof}
Set $\eta(\epsilon)=C(g)\epsilon^{1/\kappa}$.
Then for every $\epsilon>0$, using \eqref{eq:regularization},
\begin{align*}
 &(\cZ_{\epsilon},\cZ_{\epsilon})_g-(\cZ,\cZ)_g
=\int_{\sP^1\times\sP^1}\Phi_g\rd(\cZ_{\epsilon}\times\cZ_{\epsilon})
-\int_{\sP^1\times\sP^1\setminus\diag_{\bP^1(K)}}\Phi_g\rd(\cZ\times\cZ)\\
=&
\sum_{w\in\supp\cZ}(\ord_w\cZ)^2
\int_{\sP^1\times\sP^1}\Phi_g\rd([w]_{\epsilon}\times[w]_{\epsilon})\\
&+\sum_{(z,w)\in\bP^1\times\bP^1\setminus\diag_{\bP^1}}
\left(\int_{\sP^1\times\sP^1}\Phi_g(\cS,\cS')
\rd([z]_{\epsilon}\times [w]_{\epsilon})(\cS,\cS')-\Phi_g(z,w)\right)\\
\ge&
\sum_{w\in\supp\cZ\setminus\{\infty\}}(\ord_w\cZ)^2
\left(
C_{\operatorname{abs}}+\log\epsilon-2\epsilon
+2\log[w,\infty]
-2\eta(\epsilon)
-2g(w))\right)\\
&+(\cZ(\{\infty\}))^2(C_{\operatorname{abs}}+\log\epsilon-2\epsilon-2\eta(\epsilon)-2g(\infty))+\\
&+\left((\deg\cZ)^2-(\cZ\times\cZ)(\diag_{\bP^1(\overline{k})})\right)(-2\epsilon-2\eta(\epsilon))\\
=&
\left((\cZ\times\cZ)(\diag_{\bP^1(\overline{k})})\right)(C_{\operatorname{abs}}+\log\epsilon-2\epsilon-2\eta(\epsilon))+\\
&
+2\sum_{w\in\supp\cZ\setminus\{\infty\}}(\ord_w\cZ)^2\log[w,\infty]
-2\sum_{w\in\supp\cZ}(\ord_w\cZ)^2g(w)+\\
&+\left((\deg\cZ)^2-(\cZ\times\cZ)(\diag_{\bP^1(\overline{k})})\right)(-2\epsilon-2\eta(\epsilon)),
\end{align*}
which completes the proof.
\end{proof}

\subsection{Global estimate}
Let $k$ be a product formula field.
Let $\cZ$ be a $k$-algebraic zeros divisor on $\bP^1(k_s)$, and
$g=\{g_v:v\in M_k\}$ be a placewise H\"older continuous
adelic normalized weight, so for every $v\in M_k$, 
$g_v$ is a normalized weight on $\sP^1(\bC_v)$ and
is a $1/\kappa_v$-H\"older continuous function
on $(\sP^1(\bC_v),\sd_v)$ for some $\kappa_v\ge 1$
having the $1/\kappa_v$-H\"older constant 
$C(g_v)\ge 0$.

\begin{lemma}\label{th:quantitativeFekete}
For every $v_0\in M_k$ and every $\epsilon>0$,
\begin{align*}
N_{v_0}(\cZ_{\epsilon},\cZ_{\epsilon})_{g_{v_0}}
\ge& 
 -2(\deg\cZ)^2h_g(\cZ)\\
&+(C_{\operatorname{abs}}+\log\epsilon)\cdot(\cZ\times\cZ)(\diag_{\bP^1(k_s)})
\cdot\sum_{v\in E_g\cup\{v_0\}}N_v\\
&-2(\deg\cZ)^2\cdot\sum_{v\in E_g\cup\{v_0\}}N_v
(\epsilon+C(g_v)\epsilon^{1/\kappa_{v_0}}).
\end{align*}
\end{lemma}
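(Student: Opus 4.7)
The plan is to combine three ingredients: the non-positivity of regularized Fekete sums from \S\ref{sec:negative}, the local estimate of Lemma \ref{th:quantitativelocal}, and the global identity of Lemma \ref{th:global}. Since $g$ is an adelic normalized weight, each $g_v$ is a normalized weight on $\sP^1(\bC_v)$, so the derivation in \S\ref{sec:negative} applied to the discrete measure $\cZ$ (supported in $\bP^1\subset\sP^1$) yields $(\cZ_\epsilon,\cZ_\epsilon)_{g_v}\le 0$ for every $v\in M_k$. Dropping the non-positive contributions from $v\in E_g\setminus\{v_0\}$ gives the preliminary inequality
\[
N_{v_0}(\cZ_\epsilon,\cZ_\epsilon)_{g_{v_0}}\ge\sum_{v\in E_g\cup\{v_0\}}N_v(\cZ_\epsilon,\cZ_\epsilon)_{g_v}.
\]

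Next I would apply Lemma \ref{th:quantitativelocal} to each $v\in E_g\cup\{v_0\}$ and sum with weights $N_v$. To match the uniform exponent $1/\kappa_{v_0}$ appearing in the statement, I would take $\kappa_{v_0}$ to dominate every $\kappa_v$ with $v\in E_g\cup\{v_0\}$; since $(\sP^1(\bC_v),\sd_v)$ has diameter at most $1$, any $1/\kappa_v$-H\"older function with constant $C(g_v)$ is automatically $1/\kappa_{v_0}$-H\"older with the same constant whenever $\kappa_{v_0}\ge\kappa_v$. Abbreviating
\[
T_v:=(\cZ,\cZ)_{g_v}+2\!\!\sum_{w\in\supp\cZ\setminus\{\infty\}}\!\!(\ord_w\cZ)^2\log[w,\infty]_v-2\!\!\sum_{w\in\supp\cZ}\!\!(\ord_w\cZ)^2 g_v(w),
\]
this step reduces the lemma to verifying that $\sum_{v\in E_g\cup\{v_0\}}N_v T_v\ge -2(\deg\cZ)^2 h_g(\cZ)$.

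By the global identity of Lemma \ref{th:global}, $\sum_{v\in M_k}N_v T_v=-2(\deg\cZ)^2 h_g(\cZ)$, so the required inequality is equivalent to $\sum_{v\in M_k\setminus(E_g\cup\{v_0\})}N_v T_v\le 0$. For each such $v$ one has $g_v\equiv 0$, so the $g_v$-terms in $T_v$ drop out and
\[
T_v=\int_{\sP^1(\bC_v)\times\sP^1(\bC_v)\setminus\diag_{\bP^1(\bC_v)}}\log[\cS,\cS']_{\can,v}\,\rd(\cZ\times\cZ)(\cS,\cS')+2\!\!\sum_{w\in\supp\cZ\setminus\{\infty\}}\!\!(\ord_w\cZ)^2\log[w,\infty]_v.
\]
Both kernels $[\cS,\cS']_{\can,v}$ and $[w,\infty]_v$ take values in $[0,1]$, so their logarithms are non-positive; hence $T_v\le 0$ for every $v\notin E_g$. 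Combining the three steps yields the asserted lower bound. The substantive step is this last observation: once $g_v$ vanishes, $T_v$ collapses to a manifestly non-positive logarithmic quantity, so the tail excluded from the sum in Lemma \ref{th:global} contributes with the favorable sign; everything else is careful bookkeeping around the local-to-global inequality and the regularization estimate.
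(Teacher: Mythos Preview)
Your proof is correct and follows essentially the same route as the paper's: first use the non-positivity $(\cZ_\epsilon,\cZ_\epsilon)_{g_v}\le 0$ from \S\ref{sec:negative} to pass from the single term $N_{v_0}(\cZ_\epsilon,\cZ_\epsilon)_{g_{v_0}}$ to the sum over $E_g\cup\{v_0\}$, then apply Lemma~\ref{th:quantitativelocal} at each place, and finally extend the resulting sum of the $T_v$'s to all of $M_k$ via Lemma~\ref{th:global}, using that $T_v\le 0$ for $v\notin E_g$ because both $\log[\cdot,\cdot]_{\can,v}$ and $\log[w,\infty]_v$ are non-positive once $g_v\equiv 0$. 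Your remark that one may take $\kappa_{v_0}\ge\kappa_v$ for all $v\in E_g\cup\{v_0\}$ (since $\sd_v\le 1$) is a helpful clarification of why the uniform exponent $1/\kappa_{v_0}$ is legitimate; the paper leaves this implicit.
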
  

\begin{proof}
Fix $v_0\in M_k$. Since $(\cZ_{\epsilon},\cZ_{\epsilon})_{g_v}\le 0$ 
for every $\epsilon>0$ and every $v\in M_k$ (see \S\ref{sec:negative}),
using also Lemma \ref{th:quantitativelocal}, we have
\begin{align*}
&N_{v_0}(\cZ_{\epsilon},\cZ_{\epsilon})_{g_{v_0}}
\ge 
\sum_{v\in E_g\cup\{v_0\}}N_v(\cZ_{\epsilon},\cZ_{\epsilon})_{g_{v_0}}\\
\ge
&\sum_{v\in E_g\cup\{v_0\}}N_v
\biggl((\cZ,\cZ)_{g_v}+\\
&+2\sum_{w\in\supp\cZ\setminus\{\infty\}}(\ord_w\cZ)^2\log[w,\infty]_v
-2\sum_{w\in\supp\cZ}(\ord_w\cZ)^2g_v(w)
\biggr)\\
&+(C_{\operatorname{abs}}+\log\epsilon)\cdot(\cZ\times\cZ)(\diag_{\bP^1(k_s)})
\cdot\sum_{v\in E_g\cup\{v_0\}}N_v\\
&-2(\deg\cZ)^2\cdot\sum_{v\in E_g\cup\{v_0\}}N_v
(\epsilon+C(g_v)\epsilon^{1/\kappa_{v_0}}).
\end{align*}
Moreover, since for every $v\in M_k\setminus E_g$, 
$g_v\equiv 0$ on $\sP^1(\bC_v)$ and $(\cZ,\cZ)_{g_v}\le 0$,
using also \eqref{eq:globalkey}, we have
\begin{align*}
&\sum_{v\in E_g\cup\{v_0\}}N_v\biggl((\cZ,\cZ)_{g_v}+\\
&+2\sum_{w\in\supp\cZ\setminus\{\infty\}}(\ord_w\cZ)^2\log[w,\infty]_v
-2\sum_{w\in\supp\cZ}(\ord_w\cZ)^2g_v(w)
\biggr)\\
\ge
&\sum_{v\in M_k}N_v\biggl((\cZ,\cZ)_{g_v}+\\
&+2\sum_{w\in\supp\cZ\setminus\{\infty\}}(\ord_w\cZ)^2\log[w,\infty]_v
-2\sum_{w\in\supp\cZ}(\ord_w\cZ)^2g_v(w)
\biggr)\\
=&
-2(\deg\cZ)^2h_g(\cZ),
\end{align*}
which completes the proof.
\end{proof}

\section{Proofs of Theorems \ref{th:adelicquantitative} and \ref{th:arith}}
\label{sec:quantitative}

\begin{proof}[Proof of Theorem $\ref{th:adelicquantitative}$]
Fix $v_0\in M_k$.
For every $v\in M_k$,
$g_v$ is a $1/\kappa_v$-H\"older continuous function
on $(\sP^1(\bC_v),\sd_v)$ for some $\kappa_v\ge 1$
having the $1/\kappa_v$-H\"older constant $C(g_v)\ge 0$.
Set $\epsilon=1/(\deg\cZ)^{2\kappa_{v_0}}$.
For every test function $\phi\in C^1(\sP^1(\bC_{v_0}))$,
 by \eqref{eq:schwarz} and Lemma \ref{th:quantitativeFekete},
 \begin{multline*}
 \left|\int_{\sP^1(\bC_{v_0})}\phi\rd\left(\frac{\cZ}{\deg\cZ}-\mu^g_{v_0}\right)\right|
 \le\frac{\Lip(\phi)_{v_0}}{(\deg\cZ)^{2\kappa_0}}
 +\frac{\langle\phi,\phi\rangle_{v_0}^{1/2}}{N_{v_0}^{1/2}}\times\\
 \times\Biggl(2\cdot h_g(\cZ)
 +(-C_{\operatorname{abs}}+2\kappa_{v_0}\log\deg\cZ)\cdot
 \frac{(\cZ\times\cZ)(\diag_{\bP^1(k_s)})}{(\deg\cZ)^2}
 \cdot\sum_{v\in E_g\cup\{v_0\}}N_v+\\ 
 +2\sum_{v\in E_g\cup\{v_0\}}N_v
 \left(\frac{1}{(\deg\cZ)^{2\kappa_0}}+\frac{C(g_v)}{(\deg\cZ)^2}\right) 
 \Biggr)^{1/2}, 
 \end{multline*}
 which
 completes the proof.
\end{proof}

\begin{proof}[Proof of Theorem $\ref{th:arith}$]
 Fix $v_0\in M_k$. For every $n\in\bN$, 
 $(\cZ_n,\cZ_n)_{g_v}\le 0$ if $v\in M_k\setminus E_g$.
 Hence by \eqref{eq:Feketeupper} (and $V_{g_v}=0$ for every $v\in M_k$)
 and \eqref{eq:globalkey},
 \begin{multline*}  
 N_{v_0}\frac{(\cZ_n,\cZ_n)_{g_{v_0}}}{(\deg\cZ_n)^2}
 +\#E_g\cdot o(1)\ge
 \sum_{v\in M_k}N_v\frac{(\cZ_n,\cZ_n)_{g_v}}{(\deg\cZ_n)^2}\\
 \ge-2\cdot h_g(\cZ_n)
 -2\frac{(\cZ_n\times\cZ_n)(\diag_{\bP^1(k_s)})}{(\deg\cZ_n)^2}\sum_{v\in E_g}N_v\sup_{\sP^1(\bC_v)}|g_v|\quad\text{as }n\to\infty,
 \end{multline*} 
 so that under the assumption that
 $(\cZ_n)$ has both small diagonals and small $g$-heights, we have
 $\liminf_{n\to\infty}(\cZ_n,\cZ_n)_{g_{v_0}}/(\deg\cZ_n)^2\ge 0=V_{g_{v_0}}$.
 Hence \eqref{eq:Feketeequiv} holds for $g_{v_0}$ and $(\cZ_n)$, and
 the proof is complete.
\end{proof} 

\section{Non-archimedean and complex dynamics}
\label{sec:dynamics}

\begin{fact}
 Let $k$ be a field.
 For a rational function $\phi\in k(z)$,
 we call $F_\phi=((F_{\phi})_0,(F_{\phi})_1)\in\bigcup_{d\in\bN\cup\{0\}}(k[p_0,p_1]_d\times k[p_0,p_1]_d)$
 is a {\itshape lift} of $\phi$ if
 $\pi\circ F_{\phi}=\phi\circ\pi$ on $k^2\setminus\{0\}$
 and, in addition, $F_{\phi}^{-1}(0)=\{0\}$ when $\deg\phi>0$.
 The latter non-degeneracy condition is equivalent to 
 $\Res(F_{\phi}):=\Res((F_{\phi})_0,(F_{\phi})_1)\neq 0$;
 for the definition of the homogeneous resultant $\Res(P,Q)\in k$ between
 $P,Q\in\bigcup_{d\in\bN\cup\{0\}}k[p_0,p_1]_d$, 
 see e.g. \cite[\S2.4]{SilvermanDynamics}.
 Such a lift $F_{\phi}$ of $\phi$ is unique up to multiplication 
 in $k^*$, and is in $k[p_0,p_1]_{\deg\phi}\times k[p_0,p_1]_{\deg\phi}$.
\end{fact}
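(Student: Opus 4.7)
The plan is to construct $F_\phi$ explicitly from a coprime numerator/denominator representation of $\phi$, to derive the equivalence between $F_\phi^{-1}(0)=\{0\}$ and $\Res(F_\phi)\neq 0$ from the classical characterization of the homogeneous resultant, and to obtain uniqueness by exploiting coprimeness in the unique factorization domain $k[p_0,p_1]$. Throughout, I read ``$F_\phi^{-1}(0)=\{0\}$'' scheme-theoretically, i.e., as the absence of common zeros of $(F_\phi)_0$ and $(F_\phi)_1$ in $\overline{k}^2\setminus\{0\}$.

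First, I would write $\phi=P_1/P_0$ with coprime $P_0,P_1\in k[z]$ and set $d:=\deg\phi=\max\{\deg P_0,\deg P_1\}$. The homogenizations $(F_\phi)_j(p_0,p_1):=p_0^d P_j(p_1/p_0)$, $j=0,1$, are a pair of homogeneous polynomials of common degree $d$ satisfying $\pi\circ F_\phi=\phi\circ\pi$ on $k^2\setminus\{0\}$. When $\deg\phi>0$, the non-degeneracy $F_\phi^{-1}(0)=\{0\}$ is checked by a short case analysis: for $p_0=0$, $(F_\phi)_j(0,p_1)$ equals the leading coefficient of $P_j$ times $p_1^d$, at least one of which is nonzero by the maximality of $d$; for $p_0\neq 0$, a common zero would produce a common root $p_1/p_0\in\overline{k}$ of $P_0$ and $P_1$, contradicting coprimeness. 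The equivalence between $F_\phi^{-1}(0)=\{0\}$ and $\Res(F_\phi)\neq 0$ is then the classical fact that two positive-degree homogeneous elements of $k[p_0,p_1]$ share a common nontrivial zero in $\overline{k}^2$ if and only if their homogeneous resultant vanishes, which I would invoke from the cited reference.

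For uniqueness, let $G=(G_0,G_1)\in k[p_0,p_1]_e\times k[p_0,p_1]_e$ be any lift of $\phi$. The defining relations $\pi\circ G=\phi\circ\pi=\pi\circ F_\phi$ amount to the equality $G_1/G_0=(F_\phi)_1/(F_\phi)_0$ of rational functions in $k(p_0,p_1)$, hence to the polynomial identity $(F_\phi)_0\,G_1=(F_\phi)_1\,G_0$ in $k[p_0,p_1]$. Coprimeness of $(F_\phi)_0$ and $(F_\phi)_1$ in the UFD $k[p_0,p_1]$ (inherited from the absence of a common nontrivial zero in $\overline{k}^2$) and Euclid's lemma then give $G=h\cdot F_\phi$ for some $h\in k[p_0,p_1]$. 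Homogeneity of each component of $G$ forces $h$ to be itself homogeneous of degree $e-d$, and since every non-constant homogeneous polynomial in two variables has a zero in $\overline{k}^2\setminus\{0\}$, the non-degeneracy hypothesis imposed on $G$ rules out any non-constant $h$, giving $h\in k^*$ and $e=d$.

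The main obstacle is really organizational: the algebraic inputs -- unique factorization in $k[p_0,p_1]$, the resultant characterization of common zeros, and the existence of zeros of nonconstant homogeneous polynomials over $\overline{k}$ -- are all classical, and the chief care lies in distinguishing between working over $k$ and over $\overline{k}$. The edge case $\deg\phi=0$ (with $\phi$ constantly equal to some $c\in\bP^1(k)$) is handled separately by taking the evident degree-zero lift $(1,c)$ or $(0,1)$, the non-degeneracy hypothesis being vacuous there.
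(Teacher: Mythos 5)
Your proposal is correct, and it supplies what the paper deliberately omits: the statement appears there as a \emph{Fact} with no proof, only a pointer to Silverman's book for the homogeneous resultant. Your argument (homogenizing a coprime numerator/denominator pair, invoking the classical resultant criterion for common nontrivial zeros over $\overline{k}$, and deducing uniqueness up to $k^*$ from coprimality in the UFD $k[p_0,p_1]$ together with the fact that nonconstant binary forms have nontrivial zeros over $\overline{k}$) is exactly the standard route the paper implicitly relies on, including your explicit --- and correct --- reading of $F_\phi^{-1}(0)=\{0\}$ over $\overline{k}$ so that it matches $\Res(F_\phi)\neq 0$.
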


Let $K$ be an algebraically closed field that is complete with respect to a
non-trivial absolute value $|\cdot|$.

\subsection{Dynamical Green function $g_f$ on $\sP^1$}

For the foundation of a potential theoretical study of
dynamics on the Berkovich projective line, see Baker--Rumely
\cite{BR10} and Favre--Rivera-Letelier \cite{FR09} for non-archimedean $K$
and, e.g., \cite[\S VIII]{BM01} for archimedean $K(\cong\bC)$.

\begin{fact}\label{eq:action}
 Let $\phi\in K(z)$ be a rational function of degree $d_0\in\bN\cup\{0\}$.
 The action of $\phi$ on $\bP^1=\bP^1(K)$
 uniquely extends to a continuous endomorphism on $\sP^1=\sP^1(K)$. When
 $d_0>0$, the extended $\phi$
 is surjective, open, and discrete and preserves
 $\bP^1$ and $\sH^1=\sH^1(K)$, 
 the local degree function $z\mapsto\deg_z\phi$ on $\bP^1$ also canonically
 extends to $\sP^1$,
 and the (mapping) degree of the extended $\phi:\sP^1\to\sP^1$ still equals $d_0$
 (cf.\ \cite[\S 2.3, \S 9]{BR10}, \cite[\S 6.3]{Benedetto10}):
 in particular, the extended action $\phi$ on $\sP^1$
 induces a push-forward $\phi_*$ and a pullback $\phi^*$ 
 on the spaces of continuous functions and of
 Radon measures on $\sP^1$. When $d_0=0$,
 the extended $\phi$ is still constant, and we set $\phi^*\mu:=0$ on $\sP^1$
 for every Radon measure $\mu$ on $\sP^1$ by convention.
 Let $F_{\phi}\in K[p_0,p_1]_{\deg\phi}\times K[p_0,p_1]_{\deg\phi}$
 be a lift of $\phi$. The function 
 \begin{gather}
  T_{F_{\phi}}:=\log\|F_{\phi}(\cdot/\|\cdot\|)\|=\log\|F_\phi\|-(\deg\phi)\log\|\cdot\|\label{eq:descend}
 \end{gather} 
 on $K^2\setminus\{0\}$ descends to $\bP^1$ and in turn extends 
 continuously to $\sP^1$, satisfying 
 $\Delta T_{F_{\phi}}=\phi^*\Omega_{\can}-(\deg\phi)\Omega_{\can}$
 on $\sP^1$ (see, e.g., \cite[Definition 2.8]{OkuCharacterization}).
 Moreover,
 $\phi$ is a Lipschitz continuous
 endomorphism on $(\sP^1,\sd)$ and 
 $T_{F_{\phi}}$ is a Lipschitz continuous function on $(\sP^1,\sd)$
 (for non-archimedean $K$, see \cite[Proposition 9.37]{BR10}).
 For every $n\in\bN$, 
 $F_{\phi}^n\in K[p_0,p_1]_{\deg\phi^n}\times K[p_0,p_1]_{\deg\phi^n}$
 is a lift of $\phi^n$. 
\end{fact}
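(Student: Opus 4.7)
The plan is to treat the archimedean and non-archimedean cases separately. In the archimedean case $\sP^1\cong\bP^1(\bC)$ and every assertion is classical complex analysis on the Riemann sphere. So I would concentrate on the non-archimedean case, relying on the seminorm/tree picture of $\sP^1$ developed in \cite{BR10} and \cite{FR09}; the whole Fact is really a bundled recollection from that theory, and my job is to identify what has to be verified versus what can be quoted.

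First I would construct the continuous extension $\phi:\sP^1\to\sP^1$ by interpreting each point $\cS\in\sP^1$ as a bounded multiplicative seminorm on a homogeneous coordinate ring: a lift $F_\phi=((F_\phi)_0,(F_\phi)_1)$ with $\Res(F_\phi)\neq 0$ induces a pushforward of seminorms through $P\mapsto P\circ F_\phi$, independent of scaling in $K^*$, and the resulting map is continuous in the Berkovich topology. Surjectivity, openness, and discreteness of the extended $\phi$, its preservation of $\bP^1$ and $\sH^1$, and the existence of a well-defined local degree with $\sum_{\cS'\in\phi^{-1}(\cS)}\deg_{\cS'}\phi=d_0$, would be obtained from the normal-form analysis at the four types of Berkovich points; I would cite \cite[\S 9]{BR10} or \cite[\S 6.3]{Benedetto10} rather than reproduce the case analysis. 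The induced $\phi_*,\phi^*$ on Radon measures and continuous functions are then formal, and the convention for $d_0=0$ is immediate.

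Next I would handle $T_{F_\phi}$. Descent to $\bP^1$ is by the identity \eqref{eq:descend}, and continuous extension to $\sP^1$ follows by factoring each component of $F_\phi$ into linear forms over $\overline{K}$ and using the fact that $\log[\cdot,w]_{\can}$ extends continuously to $\sP^1$ for every $w\in\bP^1$. For the Laplacian identity $\Delta T_{F_\phi}=\phi^*\Omega_{\can}-(\deg\phi)\Omega_{\can}$, I would apply \eqref{eq:Laplacian} termwise to the sum of chordal potentials coming from this factorization and then match the result with the pullback formula
\begin{gather*}
 \phi^*\delta_w=\sum_{\cS'\in\phi^{-1}(w)}(\deg_{\cS'}\phi)\,\delta_{\cS'},
\end{gather*}
specializing to $w=\cS_{\can}$. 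Integration of this against $\Omega_{\can}=\delta_{\cS_{\can}}$ is what produces $\phi^*\Omega_{\can}$ on the right.

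Finally, Lipschitz continuity of $\phi$ and $T_{F_\phi}$ on $(\sP^1,\sd)$ is a local computation: the defining rational data are smooth away from poles and the small model metric is bi-Lipschitz equivalent to the Hsia-kernel derived distance on compact subsets of $\sH^1$, which reduces the question to a standard estimate in \cite[Proposition 9.37]{BR10}. The iteration claim that $F_\phi^n$ is a lift of $\phi^n$ follows by induction from $\pi\circ F_\phi=\phi\circ\pi$ together with multiplicativity of the resultant under composition, which preserves the non-degeneracy $F^{-1}(0)=\{0\}$. The most delicate step, and the one I expect to be the main obstacle, is the Laplacian formula, because it requires reconciling the seminorm definition of $\phi^*$ with the multiplicity bookkeeping at Berkovich ramification points; in the tree language of \cite{FR09} this amounts to tracking the derivative of $\phi$ along segments in $\sH^1$, which is standard but genuinely needs care to state cleanly.
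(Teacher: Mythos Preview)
The paper does not prove this statement: it is recorded as a \emph{Fact} with citations to \cite[\S 2.3, \S 9]{BR10}, \cite[\S 6.3]{Benedetto10}, \cite[Definition 2.8]{OkuCharacterization}, and \cite[Proposition 9.37]{BR10}, and nothing beyond those pointers is offered. Your instinct that this is ``a bundled recollection'' is correct, and most of your sketch---the seminorm construction of the extension, the citation for openness/surjectivity/local degree, the descent of $T_{F_\phi}$, the Lipschitz claims, and the iteration step---matches what those references provide.

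There is, however, a genuine gap in your route to $\Delta T_{F_\phi}=\phi^*\Omega_{\can}-(\deg\phi)\Omega_{\can}$ in the non-archimedean case. Factoring $(F_\phi)_0$ and $(F_\phi)_1$ into linear forms does not express $T_{F_\phi}$ as a \emph{sum} of chordal potentials: since $\|F_\phi\|=\max\{|(F_\phi)_0|,|(F_\phi)_1|\}$, you get $T_{F_\phi}=\max_i\log S_{(F_\phi)_i}$, and the Laplacian of a maximum is not the sum of the Laplacians. More fundamentally, the zeros of the $(F_\phi)_i$ are the classical points $\phi^{-1}(0)\cup\phi^{-1}(\infty)\subset\bP^1$, whereas $\phi^*\Omega_{\can}=\phi^*\delta_{\cS_{\can}}$ is supported on $\phi^{-1}(\cS_{\can})\subset\sH^1$; no ``specialization to $w=\cS_{\can}$'' of the classical-point pullback formula produces these type~II masses from the linear factorization. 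A correct argument uses the identity
\[
T_{F_\phi}=\log S_{(F_\phi)_0}-\log[\phi(\cdot),\infty]_{\can}
\]
(immediate from $[\phi(z),\infty]=|(F_\phi)_0(p)|/\|F_\phi(p)\|$) and then applies the functoriality $\Delta(u\circ\phi)=\phi^*(\Delta u)$ to the second term; it is this pullback compatibility of the Laplacian, not the linear factorization, that manufactures $\phi^*\Omega_{\can}$. That functoriality is exactly the ``tracking the derivative of $\phi$ along segments'' you allude to at the end, but it has to enter the computation directly rather than through a factorization over $\bP^1$.
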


Let $f\in K(z)$ be a rational function of degree $d>1$ and
$F\in K[p_0,p_1]_d\times K[p_0,p_1]_d$ a lift of $f$.
Then the uniform limit 
$g_F:=\lim_{n\to\infty}T_{F^n}/d^n$
on $\sP^1$ exists, and more precisely, for every $n\in\bN$,
\begin{gather}
 \sup_{\sP^1}\left|g_F-\frac{T_{F^n}}{d^n}\right|
 \le
 \frac{\sup_{\sP^1}|T_F|}{d^n(d-1)}.\label{eq:uniformconvGreen}
 \end{gather}
The limit $g_F$ is called
 the {\itshape dynamical Green function of $F$} on $\sP^1$ and
is a continuous weight on $\sP^1$.
The probability Radon measure
\begin{gather*}
 \mu_f:=\mu^{g_F}=\Delta g_F+\Omega_{\can}
 =\lim_{n\to\infty}\frac{(f^n)^*\Omega_{\can}}{d^n}\quad\text{weakly on }\sP^1
\end{gather*}
is independent of choices of $F$ and satisfies 
$f^*\mu_f=d\cdot\mu_f$ on $\sP^1$, and 
is called the {\itshape $f$-equilibrium $($or canonical$)$ measure} on $\sP^1$.
Moreover, $g_F$ is a H\"older continuous function on $(\sP^1,\sd)$
(for non-archimedean $K$, see \cite[\S 6.6]{FR06}).
The remarkable {\itshape energy formula}
\begin{gather}
 V_{g_F}=-\frac{\log|\Res F|}{d(d-1)}\label{eq:energyformula}
\end{gather}
was first established by DeMarco \cite{DeMarco03} for archimedean $K$
and was generalized to rational functions defined over a number field 
by Baker--Rumely \cite{BR06} (for a simple proof of \eqref{eq:energyformula}
which also works for general $K$, see Baker--Rumely \cite[Appendix A]{Baker09}
 or Stawiska and the author \cite[Appendix]{OS11}). 
The {\itshape dynamical Green function $g_f$ of $f$ on} $\sP^1$ is 
the unique normalized weight on $\sP^1$ such that $\mu^{g_f}=\mu_f$, i.e.,
for any lift $F$ of $f$,
$g_f\equiv g_F+V_{g_F}/2$ on $\sP^1$.

\subsection{Berkovich space version of the quasiperiodicity region $\cE_f$}
For non-archimedean dynamics,
see \cite[\S 10]{BR10}, \cite[\S2.3]{FR09}, \cite[\S 6.4]{Benedetto10}.
For complex dynamics, see, e.g., \cite{Milnor3rd}.

Let $f\in K(z)$ be a rational function of degree $>1$.
The {\itshape Berkovich} Julia set of $f$ is
$\sJ(f):=\{z\in\sP^1:\bigcap_{U:\text{ open neighborhood of }z\text{ in }\sP^1}
\left(\bigcup_{n\in\bN}f^n(U)\right)=\sP^1\setminus E(f)\}$,
where $E(f):=\{a\in\bP^1:\#\bigcup_{n\in\bN}f^{-n}(a)<\infty\}$
is the {\itshape exceptional set} of $f$,
and the {\itshape Berkovich} Fatou set is $\sF(f):=\sP^1\setminus\sJ(f)$.
By definition, $\sJ(f)$ is closed and $\sF(f)$ is open in $\sP^1$,
both $\sJ(f)$ and $\sF(f)$ are totally invariant under $f$, and
$\sJ(f)$ has no interior point unless $\sJ(f)=\sP^1$. Moreover,
the {\itshape classical} Julia set $\sJ(f)\cap\bP^1$ (resp.\ the {\itshape classical}
Fatou set $\sF(f)\cap\bP^1$) coincides with the set of
all non-equicontinuity points (resp.\ the region of equicontinuity)
of the family $\{f^n:n\in\bN\}$ as a family of endomorphisms on $(\bP^1,[z,w])$. 


A component $U$ 
of $\sF(f)$ is called a {\itshape Berkovich} Fatou component of $f$,
and is said to be {\itshape cyclic} under $f$ if $f^n(U)=U$ for some $n\in\bN$, 
which is called a {\itshape period} of $U$ under $f$.
A cyclic Berkovich Fatou component $U$ of $f$ 
having a period $n\in\bN$ is called a {\itshape singular domain}
of $f$ if $f^n:U\to U$ is injective (following Fatou \cite[Sec. 28]{Fatou1920trois}). Let $\cE_f$ 
be the set of all points $\cS\in\sP^1$ having an open neighborhood $V$
in $\sP^1$ such that $\liminf_{n\to\infty}\sup_{V\cap\bP^1}[f^n,\Id]=0$,
which is a Berkovich space version of 
Rivera-Letelier's {\itshape quasiperiodicity region} of $f$:
when $K$ is archimedean,
$\cE_f$ {\itshape coincides with} the union of all singular domains of $f$, 
and when $K$ is non-archimedean, $\cE_f$ is 
still open and forward invariant under $f$ and 
{\itshape is contained in} the union of all singular domains of $f$
 (cf.\ \cite[Lemma 4.4]{OkuCharacterization}).

The following function $T_*$ is Rivera-Letelier's {\itshape iterative logarithm} of $f$ 
on $\cE_f\cap\bP^1$, which is a non-archimedean counterpart of the uniformization 
of a Siegel disk or an Herman ring of $f$. 

\begin{theorem}[{\cite[\S 3.2, \S4.2]{Juan03}. See also \cite[Th\'eor\`eme 2.15]{FR09}}]\label{th:uniformization}
 Suppose that $K$ is non-archimedean and
 has characteristic $0$ and residual characteristic $p$. 
 Let $f\in K(z)$ be a rational function on $\bP^1$
 of degree $>1$ and suppose that $\cE_f\neq\emptyset$, which implies 
 $p>0$ by \cite[Lemme 2.14]{FR09}.
 Then for every component $Y$ of $\cE_f$ not containing $\infty$,
 there are $k_0\in\bN$, a continuous action 
 $T:\bZ_p\times (Y\cap K)\ni (\omega,y)\mapsto T^\omega(y)\in Y\cap K$,
 and a non-constant $K$-valued holomorphic function $T_*$ on $Y\cap K$
 such that for every $m\in\bZ$, $(f^{k_0})^m=T^m$ on $Y\cap K$, 
 that for each $\omega\in\bZ_p$, $T^{\omega}$ is a biholomorphism on $Y\cap K$,
 and that
 for every $\omega_0\in\bZ_p$,
 \begin{gather}
  \lim_{\bZ_p\ni\omega\to\omega_0}\frac{T^{\omega}-T^{\omega_0}}{\omega-\omega_0}
=T_*\circ T^{\omega_0}\quad\text{locally uniformly on }Y\cap K.\label{eq:logarithm}
 \end{gather} 
\end{theorem}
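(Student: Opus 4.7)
The plan is to exploit the non-archimedean equicontinuity encoded by $Y \subset \cE_f$ together with the $p$-adic density of $\bZ$ in $\bZ_p$ to extend the discrete semigroup $\{(f^{k_0})^m : m \in \bZ\}$ to a continuous $\bZ_p$-action, and then to define $T_*$ as the derivative of this action at the identity element $0 \in \bZ_p$.

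First, I would choose $k_0$ as follows. Pick any $y_0 \in Y \cap K$; by definition of $\cE_f$, there is an open neighborhood $V$ of $y_0$ in $\sP^1$ with $\liminf_{n\to\infty} \sup_{V \cap \bP^1}[f^n,\Id] = 0$. Using that $f$ preserves $\sF(f)$ and that $Y$ is a component of the open set $\cE_f$, I would pass to a subsequence of iterates $f^{n_j} \to \Id$ uniformly on a closed sub-disk $V' \subset V$ of positive radius, and take $k_0 = n_{j_0}$ so large that $f^{k_0}$ sends $Y \cap K$ into itself and is close enough to $\Id$ for ultrametric contraction arguments to apply. The second step is to establish, via the strong triangle inequality and the Lipschitz estimate on $f$ in $(\sP^1,\sd)$, that the iterates $\{(f^{k_0})^m : m \in \bZ\}$ form an equicontinuous family of $K$-analytic biholomorphisms of $Y \cap K$ (invertibility for negative $m$ uses that $f^{k_0}$ is close to $\Id$, so has a local analytic inverse that also preserves $Y$).

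The heart of the proof is the third step, where I would show $m \mapsto (f^{k_0})^m(y)$ is uniformly $p$-adically continuous in $m \in \bZ$, uniformly in $y \in Y \cap K$. Concretely, I would prove that $\sup_{Y \cap K} |(f^{k_0})^{p^N}(y) - y| \to 0$ geometrically in $N$: writing $f^{k_0}(z) = z + \phi(z)$ with $\phi$ analytic and small on $Y \cap K$, a formal-group / Lubin--Tate style expansion gives that the $p^N$-th iterate satisfies $(f^{k_0})^{p^N}(z) - z \equiv p^N \phi(z) \pmod{\text{higher order in } \phi}$, exploiting $p = \operatorname{char}(\text{residual field})>0$. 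This shrinkage lets me extend the map $\bZ \to \operatorname{Aut}^{\text{an}}(Y \cap K)$, $m \mapsto (f^{k_0})^m$, continuously to $\bZ_p$ using completeness; I would then define $T^\omega$ as this extension and verify $T^{\omega+\omega'} = T^\omega \circ T^{\omega'}$ by passing to the limit from the dense subset $\bZ \times \bZ$.

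Finally, I would define $T_*(y) := \lim_{\bZ_p \ni \omega \to 0}(T^\omega(y)-y)/\omega$, and check that the cocycle identity $T^{\omega_0 + h} = T^h \circ T^{\omega_0}$ implies \eqref{eq:logarithm} for every $\omega_0$. Holomorphy of $T_*$ would come from expanding $T^\omega$ as a convergent power series in $\omega$ with coefficients that are $K$-analytic functions of $y$; the constant term is $y$ and the linear coefficient is $T_*(y)$. Non-constancy would follow from the fact that if $T_* \equiv 0$, then the differential equation forces $T^\omega \equiv \Id$, in particular $f^{k_0} = T^1 = \Id$, contradicting $\deg f > 1$. The main obstacle, and what needs the most care, is the third step: obtaining the geometric bound on $\sup|(f^{k_0})^{p^N} - \Id|$, since this is where the non-archimedean Lie-theoretic structure (and the hypothesis that the residual characteristic is $p > 0$) is genuinely used, and where Rivera-Letelier's analysis in \cite{Juan03} is most delicate.
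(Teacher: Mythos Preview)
The paper does not prove this theorem at all: it is stated as background, with the proof attributed entirely to Rivera-Letelier \cite[\S 3.2, \S 4.2]{Juan03} (see also \cite[Th\'eor\`eme 2.15]{FR09}), and then invoked as a black box in the proof of Theorem~\ref{th:nonlinearity}. So there is no ``paper's own proof'' to compare your proposal against.

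That said, your sketch is a reasonable outline of the strategy in \cite{Juan03}: one does choose an iterate $f^{k_0}$ close enough to the identity on $Y\cap K$, exploit the ultrametric to show that $m\mapsto (f^{k_0})^m$ is $p$-adically uniformly continuous (this is where residual characteristic $p>0$ enters, via the estimate on $(f^{k_0})^{p^N}-\Id$), extend by density of $\bZ$ in $\bZ_p$ to a continuous action $T$, and define $T_*$ as the $\omega$-derivative at $0$. Your identification of the third step as the crux is accurate; in \cite{Juan03} this is handled by working in local coordinates where $f^{k_0}(z)=z+\phi(z)$ with $\phi$ small, and carefully controlling the iteration. The non-constancy of $T_*$ and the cocycle derivation of \eqref{eq:logarithm} are as you indicate. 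If you want to flesh this out into a self-contained proof, the references above are where the details live; the present paper does not supply them.
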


\subsection{Fundamental relationship between $\mu_f$ and $\sJ(f)$}

The inclusion $\supp\mu_f\subset\sJ(f)$ is classical when $K$ is archimedean,
but is no trivial from the definition of $\sJ(f)$
when $K$ is non-archimedean; for an elementary proof of it,
see \cite[Proof of Theorem 2.18]{OkuCharacterization}. The equality $\supp\mu_f=\sJ(f)$ indeed holds,
but we will dispense with the reverse (and easier)
inclusion $\sJ(f)\subset\supp\mu_f$.

\section{Proofs of Theorems \ref{th:quantitativedynamics} and \ref{th:qualitativedynamics}}\label{subsec:adelicGreen}
Let $k$ be a product formula field. 
The proof of the following is based not only on \eqref{eq:product}
but also on the elimination theory (and the strong triangle inequality).

\begin{theorem}[{\cite[Lemma 3.1]{BR06}}]\label{th:elimination}
 Let $k$ be a product formula field.
 For every $\phi\in k(z)$ and
 every lift $F_{\phi}\in k[p_0,p_1]_{\deg\phi}\times k[p_0,p_1]_{\deg\phi}$ 
 of $\phi$, 
there exists a finite subset $E_{F_{\phi}}$ in $M_k$ 
 containing all the infinite places of $k$ such that
 for every place $v\in M_k\setminus E_{F_{\phi}}$, $|\Res F_{\phi}|_v=1$ and
 $\|F_{\phi}(\cdot)\|_v=\|\cdot\|_v^{\deg\phi}$ on $\bC_v$.
\end{theorem}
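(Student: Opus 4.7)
The plan is to combine elimination theory with the product formula to exhibit a finite set of exceptional places. First I would observe that $\Res F_{\phi}\in k\setminus\{0\}$: it is nonzero because $F_{\phi}$ is a lift (so $F_{\phi}^{-1}(0)=\{0\}$), and it lies in $k$ because both components of $F_{\phi}$ are in $k[p_0,p_1]_{\deg\phi}$. Hence by \eqref{eq:product}, $|\Res F_{\phi}|_v=1$ for all but finitely many $v\in M_k$. Likewise, each of the (finitely many) nonzero coefficients of $(F_{\phi})_0$ and $(F_{\phi})_1$ lies in $k\setminus\{0\}$ and hence has $|\cdot|_v\le 1$ off a finite set. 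I take the initial candidate for $E_{F_{\phi}}$ to be the union of all infinite places of $k$ with these finite exceptional sets.

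For any $v\not\in E_{F_{\phi}}$ (at this stage), the place $v$ is non-archimedean and all coefficients of $F_{\phi}$ satisfy $|\cdot|_v\le 1$, so the strong triangle inequality immediately gives $\|F_{\phi}(p_0,p_1)\|_v\le\|(p_0,p_1)\|_v^{\deg\phi}$ on $\bC_v^2$. For the reverse inequality I would invoke the classical elimination identity: there exist homogeneous polynomials $A_j,B_j\in k[p_0,p_1]_{\deg\phi-1}$ $(j=0,1)$ whose coefficients are universal integer polynomial expressions in the coefficients of $F_{\phi}$, such that
\begin{gather*}
\Res(F_{\phi})\cdot p_j^{2\deg\phi-1}=A_j(p_0,p_1)\cdot(F_{\phi})_0(p_0,p_1)+B_j(p_0,p_1)\cdot(F_{\phi})_1(p_0,p_1)
\end{gather*}
in $k[p_0,p_1]$. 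I then enlarge $E_{F_{\phi}}$ by the finitely many places at which some coefficient of some $A_j$ or $B_j$ fails $|\cdot|_v\le 1$.

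For $v\not\in E_{F_{\phi}}$, using $|\Res F_{\phi}|_v=1$ together with the strong triangle inequality applied to both sides of the identity above yields
\begin{gather*}
|p_j|_v^{2\deg\phi-1}\le\|(p_0,p_1)\|_v^{\deg\phi-1}\cdot\|F_{\phi}(p_0,p_1)\|_v\quad(j=0,1),
\end{gather*}
and taking the maximum over $j\in\{0,1\}$ produces $\|(p_0,p_1)\|_v^{\deg\phi}\le\|F_{\phi}(p_0,p_1)\|_v$, which matches the upper bound and gives the asserted equality. The main obstacle is producing the elimination identity in this precise form, with the denominator being exactly $\Res F_{\phi}$ and with numerator coefficients lying in the integral ring generated by those of $F_{\phi}$ (rather than in some larger localization); this is a classical output of elimination theory, cf.\ \cite[\S2.4]{SilvermanDynamics}, and once invoked the remainder is elementary bookkeeping with \eqref{eq:product}.
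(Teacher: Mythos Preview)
Your proposal is correct and follows precisely the approach the paper indicates: the paper does not supply its own proof of this cited result but remarks that it rests on \eqref{eq:product}, elimination theory, and the strong triangle inequality, which is exactly the combination you implement. One minor simplification: since the coefficients of $A_j,B_j$ are integer polynomials in the coefficients of $F_{\phi}$, at any non-archimedean $v$ where all coefficients of $F_{\phi}$ already have $|\cdot|_v\le 1$ the same holds automatically for $A_j,B_j$ (integers in the prime subring have $|\cdot|_v\le 1$), so the second enlargement of $E_{F_{\phi}}$ is in fact unnecessary---but including it does no harm.
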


Let $f\in k(z)$ be a rational function of degree $>1$
and $F\in k[p_0,p_1]_d\times k[p_0,p_1]_d$ a lift of $f$.
Then the family $\hat{g}_f=\{g_{f,v}:v\in M_k\}$ is
an adelic normalized weight, where for every $v\in M_k$,
$g_{f,v}$ is the dynamical Green function of $f$ on $\sP^1(\bC_v)$;
for, letting $g_{F,v}$ be the dynamical Green function of $F$
on $\sP^1(\bC_v)$ for each $v\in M_k$ and
$E_F$ be a finite subset in $M_k$
obtained by Theorem $\ref{th:elimination}$ applied to $F$,
for every $v\in M_k\setminus E_F$, we have
$T_{F^n,v}\equiv 0$ on $\sP^1(\bC_v)$ for every $n\in\bN$ so
$g_{f,v}\equiv g_{F,v}\equiv 0$ on $\sP^1(\bC_v)$.
We call the adelic normalized weight $\hat{g}_f=\{g_{f,v}:v\in M_k\}$
and the adelic probability measure
$\hat{\mu}_f
:=\mu^{\hat{g}_f}$
the {\itshape adelic} dynamical Green function of $f$
and the {\itshape adelic $f$-equilibrium $($or canonical$)$ measure},
respectively.
Here, for every $v\in M_k$, 
$\mu_{f,v}:=\mu^{g_{f,v}}=\mu^{\hat{g}_f}_v$ (as in Section \ref{sec:intro})
is the $f$-equilibrium (or canonical) measure on $\sP^1(\bC_v)$.

Once the following is at our disposal, 
Theorems \ref{th:adelicquantitative} and \ref{th:arith} will
yield Theorems \ref{th:quantitativedynamics} and \ref{th:qualitativedynamics},
respectively.

\begin{lemma}\label{th:heightsdynamics}
Let $k$ be a product formula field.
Let $f,a\in k(z)$ be rational functions and suppose that $d:=\deg f>1$. Then
the sequence $([f^n=a])$ of $k$-algebraic zeros divisors on
$\bP^1(\overline{k})$
has strictly small $\hat{g}_f$-heights in that
$\limsup_{n\to\infty}(d^n+\deg a)\cdot h_{\hat{g}_f}([f^n=a])<\infty$.
\end{lemma}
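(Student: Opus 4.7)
The plan is to produce an explicit representative of $[f^n=a]$ coming from lifts of $f$ and $a$, estimate its logarithmic $g_{f,v}$-Mahler measures place by place, and sum using the product formula. Fix lifts $F\in k[p_0,p_1]_d\times k[p_0,p_1]_d$ of $f$ and $A\in k[p_0,p_1]_{\deg a}\times k[p_0,p_1]_{\deg a}$ of $a$. For every $n\in\bN$ with $f^n\not\equiv a$ (which holds for all $n$ large), the homogeneous polynomial $P_n:=F^n\wedge A\in k[p_0,p_1]_{d^n+\deg a}$ represents the $k$-algebraic zeros divisor $[f^n=a]$, so $\deg[f^n=a]=d^n+\deg a$.

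For each $v\in M_k$, the Jensen-type formula \eqref{eq:formulaheights} applied with $g=g_{f,v}$ (and using $V_{g_{f,v}}=0$) gives
\[
M_{g_{f,v}}(P_n)=\int_{\sP^1(\bC_v)}\bigl(\log S_{P_n,v}-(d^n+\deg a)g_{f,v}\bigr)\rd\mu_{f,v}.
\]
The key pointwise inequality $\log S_{P_n,v}\le T_{F^n,v}+T_{A,v}$ on $\sP^1(\bC_v)$ follows from the norm bound $|F^n(p)\wedge A(p)|_v\le\|F^n(p)\|_v\cdot\|A(p)\|_v$ on $\bC_v^2$ (Cauchy--Schwarz in the archimedean case, the strong triangle inequality in the non-archimedean case) together with the defining identity \eqref{eq:descend}. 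Inserting this into the Jensen-type formula, using the uniform estimate \eqref{eq:uniformconvGreen} in the form $|T_{F^n,v}-d^n g_{F,v}|\le\sup_{\sP^1(\bC_v)}|T_{F,v}|/(d-1)$, and substituting the normalization $g_{f,v}=g_{F,v}+V_{g_{F,v}}/2$, I would deduce a placewise estimate of the shape
\[
M_{g_{f,v}}(P_n)\le -(\deg a)\!\int\! g_{F,v}\rd\mu_{f,v}-\frac{d^n+\deg a}{2}V_{g_{F,v}}+\!\int\! T_{A,v}\rd\mu_{f,v}+\frac{\sup_{\sP^1(\bC_v)}|T_{F,v}|}{d-1}.
\]

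Multiplying by $N_v$ and summing over $v\in M_k$, the $V_{g_{F,v}}$ contribution vanishes: by the energy formula \eqref{eq:energyformula}, $V_{g_{F,v}}=-\log|\Res F|_v/(d(d-1))$, and since $\Res F\in k^*$, the product formula \eqref{eq:product} yields $\sum_{v\in M_k}N_v V_{g_{F,v}}=0$. Theorem \ref{th:elimination} applied in turn to $F$ and to $A$ provides a finite subset $E\subset M_k$ outside of which $T_{F,v}$, $T_{A,v}$, and $g_{F,v}$ all vanish identically on $\sP^1(\bC_v)$; hence each of the remaining three sums is finite and independent of $n$. Since $(d^n+\deg a)h_{\hat g_f}([f^n=a])=\sum_{v\in M_k}N_v M_{g_{f,v}}(P_n)$ by \eqref{eq:definingheight}, the desired uniform upper bound on $(d^n+\deg a)h_{\hat g_f}([f^n=a])$ follows.

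The main obstacle is arranging the $d^n$-scale cancellation: without combining the energy formula with the product formula applied to $\Res F$, each summand $(d^n+\deg a)N_v V_{g_{F,v}}/2$ would grow like $d^n$ and destroy any uniform bound. This is exactly the same arithmetic mechanism that makes $\hat g_f$ an adelic normalized weight in the first place.
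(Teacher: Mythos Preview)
Your proof is correct and follows essentially the same approach as the paper's: both choose lifts $F,A$, represent $[f^n=a]$ by $F^n\wedge A$, use the Jensen-type formula together with the pointwise bound $\log S_{F^n\wedge A,v}\le T_{F^n,v}+T_{A,v}$, invoke the uniform estimate \eqref{eq:uniformconvGreen}, and eliminate the growing $V_{g_{F,v}}$ contribution via the energy formula and the product formula applied to $\Res F$, while Theorem~\ref{th:elimination} reduces the remaining sum to finitely many places. The only cosmetic difference is that the paper bounds $h_{\hat g_f}([f^n=a])$ directly and records the slightly sharper $O(d^{-n})$, whereas you bound $(d^n+\deg a)h_{\hat g_f}([f^n=a])$ by a constant; both yield the statement of the lemma.
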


\begin{proof} 
 Let $F\in k[p_0,p_1]_d\times k[p_0,p_1]_d$ and
 $A\in k[p_0,p_1]_{\deg a}\times k[p_0,p_1]_{\deg a}$ 
 be lifts of $f,a$, respectively (then for every $n\in\bN$,
 $F^n\wedge A\in k[p_0,p_1]_{d^n+\deg a}\times k[p_0,p_1]_{d^n+\deg a}$
 is a representative of $[f^n=a]$ if $f^n\not\equiv a$).  
 Let $E_F,E_A$ be finite subsets in $M_k$ obtained by
 applying Theorem \ref{th:elimination} to $F,A$, respectively,
 so that for every $v\in M_k\setminus(E_F\cup E_A)$ and every $n\in\bN$,
 $T_{F^n,v}\equiv T_{A,v}\equiv 0$ and $g_{F,v}\equiv 0$ on $\sP^1(\bC_v)$.
 For every $v\in M_k$ and every $n\in\bN$ large enough, 
 since $|F^n\wedge A|_v\le\|F^n\|_v\|A\|_v$ on $\bC_v^2\setminus\{0\}$, 
 we have $\log S_{F^n\wedge A,v}\le T_{F^n,v}+T_{A,v}$ on $\bP^1(\bC_v)$ and
 in turn on $\sP^1(\bC_v)$ (recall that $S_{F^n\wedge A,v}
 =|(F^n\wedge A)(\cdot/\|\cdot\|_v)|_v$ on $\bP^1(\bC_v)$), so that
 using also $g_{f,v}\equiv g_{F,v}+V_{g_{F,v}}/2$ on $\sP^1(\bC_v)$,
 \begin{gather*}
  \frac{\log S_{F^n\wedge A,v}}{d^n+\deg a}-g_{f,v}
  \le\frac{T_{F^n,v}+T_{A,v}}{d^n+\deg a}-
  \left(g_{F,v}+\frac{1}{2}V_{g_{F,v}}\right)\quad\text{on }\sP^1(\bC_v).
 \end{gather*}
 Hence by the definition \eqref{eq:definingheight} of $h_{\hat{g}_f}$,
 the Jensen-type formula \eqref{eq:formulaheights},
 the energy formula \eqref{eq:energyformula} (and $\Res F\in k\setminus\{0\}$),
 and \eqref{eq:product},
we have
 \begin{align*}
 &h_{\hat{g}_f}([f^n=a])\\
\le&
\sum_{v\in M_k}N_v\int_{\sP^1(\bC_v)}\left(\frac{T_{F^n,v}+T_{A,v}}{d^n+\deg a}-g_{F,v}\right)\rd\mu_{f,v}-\frac{3}{2}\sum_{v\in M_k}N_v\cdot V_{g_{F,v}}\\
=&\sum_{v\in E_F\cup E_A}N_v\int_{\sP^1(\bC_v)}\left(\frac{T_{F^n,v}+T_{A,v}}{d^n+\deg a}-g_{F,v}\right)\rd\mu_{f,v}
=O(d^{-n})\quad\text{as }n\to\infty,
 \end{align*}
where the final order estimate is by
\eqref{eq:uniformconvGreen} and $\#(E_F\cup E_A)<\infty$.
\end{proof}

We omit the proof of the following characterization of $h_{\hat{g}_f}$,
which we will dispense with in this article.

\begin{lemma}
 Let $k$ be a product formula field. 
 Then for every rational function $f\in k(z)$ of degree $d>1$, 
 the $\hat{g}_f$-height function $h_{\hat{g}_f}$ coincides with
 the {\rm Call--Silverman $f$-dynamical 
 $($or canonical$)$ height function} in that
 for every $k$-algebraic zeros divisor $\cZ$ on $\bP^1(\overline{k})$,
 $($$f_*\cZ$ is also
 a $k$-algebraic zeros divisor on $\bP^1(\overline{k})$ and$)$
 the equality $(h_{\hat{g}_f}\circ f_*)(\cZ)=(d\cdot h_{\hat{g}_f})(\cZ)$ holds.
\end{lemma}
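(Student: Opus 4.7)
The plan is to establish the functional equation $h_{\hat g_f}(f_*\cZ)=d\cdot h_{\hat g_f}(\cZ)$; this implies the identification with the Call--Silverman $f$-dynamical height by the standard uniqueness argument, since $h_{\hat g_f}$ differs from the naive Weil height on $\bP^1(\overline k)$ by a function that is bounded (each $g_{f,v}$ is continuous on the compact $\sP^1(\bC_v)$, vanishes on $\sP^1(\bC_v)$ for every $v\not\in E_F$ with $E_F$ furnished by Theorem \ref{th:elimination}, and $\sum_{v\in M_k}N_v M^\#(P)_v$ is controlled by Lemma \ref{th:finite}). So the whole content is the functional equation, which I will prove by a direct local-to-global computation.

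The local input is the functional equation for the dynamical Green function. From $g_{F,v}=\lim_{n\to\infty}T_{F^n,v}/d^n$ and the cocycle $T_{F^{n+1},v}=T_{F^n,v}\circ f+d^n T_{F,v}$ on $\sP^1(\bC_v)$, one extracts $g_{F,v}\circ f=d\cdot g_{F,v}-T_{F,v}$. Combining this with $g_{f,v}=g_{F,v}+V_{g_{F,v}}/2$ and the energy formula \eqref{eq:energyformula}, the constant $(d-1)V_{g_{F,v}}/2$ collapses to $-\log|\Res F|_v/(2d)$ and one obtains
\begin{equation*}
g_{f,v}\circ f=d\cdot g_{f,v}-T_{F,v}+\frac{\log|\Res F|_v}{2d}\quad\text{on }\sP^1(\bC_v).
\end{equation*}

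Next I exhibit an explicit $k$-rational representative of $f_*\cZ$. Given a representative $P\in k[p_0,p_1]_{\deg\cZ}$ of $\cZ$ with factorization $P=\prod_{j=1}^{\deg\cZ}((p_0,p_1)\wedge q_j^P)$ in $\overline k[p_0,p_1]$, set $r_j:=F(q_j^P)\neq 0$ (nonvanishing since $F^{-1}(0)=\{0\}$) and $Q(p_0,p_1):=\prod_{j}((p_0,p_1)\wedge r_j)$. The zero multiset of $Q$ in $\bP^1(\overline k)$ is $(f(z_j^P))_j$ with the correct multiplicities, so $Q$ represents $f_*\cZ$; the crucial point is that $Q\in k[p_0,p_1]$, which follows because each $\sigma\in\operatorname{Gal}(\overline k/k)$ permutes the $q_j^P$ up to scalars $c_{j,\sigma}\in\overline k^*$ with $\prod_j c_{j,\sigma}=1$ (forced by $\sigma(P)=P$), while the degree-$d$ homogeneity of $F$ gives $\sigma(r_j)=c_{j,\sigma}^d\,r_{\sigma(j)}$, whence $\sigma(Q)=(\prod_j c_{j,\sigma})^d\cdot Q=Q$. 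The same homogeneity identity yields $M^\#(Q)_v=\sum_j T_{F,v}(z_j^P)+d\cdot M^\#(P)_v$.

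The two local pieces assemble globally. Writing $h_{\hat g_f}(f_*\cZ)$ via the representative $Q$, substituting the transformation rule and the expression for $M^\#(Q)_v$, the $T_{F,v}$-terms cancel and
\begin{equation*}
h_{\hat g_f}(f_*\cZ)=\frac{d}{\deg\cZ}\sum_{v\in M_k}N_v\Bigl(\sum_j g_{f,v}(z_j^P)+M^\#(P)_v\Bigr)+\frac{1}{2d}\sum_{v\in M_k}N_v\log|\Res F|_v,
\end{equation*}
the first term being exactly $d\cdot h_{\hat g_f}(\cZ)$ and the second vanishing by the product formula \eqref{eq:product} applied to $\Res F\in k^*$. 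The main obstacle I anticipate is the Galois-equivariance verification for $Q$ above: the scaling ambiguity of the factorization $(q_j^P)$ must be tracked carefully, and the precise numerical coincidence that $(\prod_j c_{j,\sigma})^d=1$ is what rescues the construction. Convergence of the $v$-sums is unproblematic, since each is supported on the finite set $E_F$ except $\sum_{v\in M_k}N_v M^\#(P)_v$, which is handled by Lemma \ref{th:finite}.
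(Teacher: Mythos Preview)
The paper explicitly omits the proof of this lemma (``We omit the proof of the following characterization of $h_{\hat{g}_f}$, which we will dispense with in this article''), so there is nothing to compare against; your argument has to stand on its own. It largely does: the cocycle identity $T_{F^{n+1},v}=T_{F^n,v}\circ f+d^nT_{F,v}$, the resulting transformation law $g_{f,v}\circ f=d\,g_{f,v}-T_{F,v}+\log|\Res F|_v/(2d)$, the identity $M^\#(Q)_v=\sum_jT_{F,v}(z_j^P)+d\,M^\#(P)_v$, and the global assembly with the product formula killing the $\Res F$ term are all correct and cleanly organized.

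There is one genuine gap. Your Galois-descent argument for $Q\in k[p_0,p_1]$ shows only that $Q$ is fixed by $\operatorname{Aut}(\overline{k}/k)$, whose fixed field is the perfect closure of $k$, not $k$ itself, when $k$ has positive characteristic and is imperfect. Since the lemma is stated for arbitrary product formula fields and for $\cZ$ on $\bP^1(\overline{k})$ (not $\bP^1(k_s)$), this case is live. The repair is easy and in fact more robust than the Galois route: your $Q$ equals, up to sign, the resultant
\[
\Res_X\bigl(P(X_0,X_1),\,Y_0F_1(X_0,X_1)-Y_1F_0(X_0,X_1)\bigr),
\]
which is a universal integer polynomial in the coefficients of $P$ and $F$, hence lies in $k[Y_0,Y_1]$; over $\overline{k}$ it factors as $\pm\prod_j(Y\wedge F(q_j^P))$, confirming it represents $f_*\cZ$. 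With this substitution your proof is complete. (Incidentally, the same observation makes the ``main obstacle'' you flag---the scalar bookkeeping $\prod_jc_{j,\sigma}=1$---irrelevant: the resultant construction bypasses any choice of factorization.)
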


\section{Proofs of Theorems \ref{th:nonlinearity} and \ref{th:identity}}
\label{eq:rotation}

Let $K$ be an algebraically closed field that is complete with respect to a
non-trivial absolute value $|\cdot|$. For subsets $A,B\subset\bP^1$, set
$[A,B]:=\inf_{z\in A,z'\in B}[z,z']$.

Let $f\in K(z)$ be a rational function of degree $d>1$
and $a\in K(z)$ a rational function. Let $N\in\bN$ be so large that
$f^n\not\equiv a$ if $n>N$. Then
$(\overline{\bigcup_{n\in\bN:n>N}\supp[f^n=a]}\cup\sJ(f))\cap\bP^1$
is closed in $\bP^1$. 

\begin{lemma}\label{th:limit}
 Suppose that $K$ has characteristic $0$.
 For every chordal disk $D$ in $\bP^1$ of radius $>0$
 satisfying that $\liminf_{n\to\infty}\sup_D[f^n,a]=0$,
 we have $(i)$ $a(D)\subset\cE_f$ and
 $(ii)$
 $D\setminus(\overline{\bigcup_{n\in\bN:n>N}\supp[f^n=a]}\cup\sJ(f))\neq\emptyset$,
and moreover, $(iii)$
 there is a chordal disk $D'$ in $\bP^1\setminus\sJ(f)$ of radius $>0$
 such that $\liminf_{n\to\infty}[f^n(D'),a(D')]>0$.
\end{lemma}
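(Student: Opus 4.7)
The plan is to pass to a subsequence $(n_k)$ realizing the $\liminf$ so that $\sup_D[f^{n_k},a]\to 0$, refine it further so that $m_k:=n_{k+1}-n_k\to\infty$, and exploit the compositional identity
\[f^{m_k}\circ f^{n_k}=f^{n_{k+1}}\quad\text{on }D.\]
Since both $f^{n_k}(z)$ and $f^{n_{k+1}}(z)$ tend uniformly to $a(z)$ on $D$, the iterate $f^{m_k}$ is forced to be nearly the identity on $f^{n_k}(D)$, which accumulates onto $a(D)$. This drives (i); the other two assertions then follow from (i) by rigidity arguments.

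For (i), I would fix $w\in a(D^\circ)$ and choose an open $V\ni w$ with $\overline V\subset a(D^\circ)$, the set $a(D^\circ)$ being open by the openness of $a$ from Fact \ref{eq:action} (valid when $\deg a>0$, as in the applications to Theorems \ref{th:nonlinearity} and \ref{th:identity}). For each $w'\in V$ choose $z'\in D^\circ$ with $a(z')=w'$; then
\[[f^{m_k}(w'),w']\le[f^{m_k}(w'),f^{m_k}(f^{n_k}(z'))]+[f^{n_{k+1}}(z'),a(z')],\]
whose second term tends to $0$ uniformly in $w'$, and whose first vanishes in the limit provided $\{f^{m_k}\}$ is equicontinuous at $w'$, which is the case precisely when $w'\in\sF(f)\cap\bP^1$. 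The preliminary reduction is therefore $V\cap\bP^1\subset\sF(f)$: this follows from $D\cap\sF(f)\neq\emptyset$ (otherwise $D\subset\sJ(f)$, which combined with uniform convergence along $(n_k)$ produces a derivative-growth contradiction) together with the push-forward property of $f^{n_k}$ propagating the Fatou property into a neighborhood of $a(D^\circ)$. This yields $\sup_V[f^{m_k},\Id]\to 0$, i.e.\ $w\in\cE_f$.

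For (ii), $\supp[f^n=a]$ is finite whenever $f^n\not\equiv a$, so $\bigcup_{n>N}\supp[f^n=a]$ is countable; the equidistribution of orbital supports towards $\sJ(f)$ (in the spirit of Theorem \ref{th:qualitativedynamics}) prevents the closure of this union from filling the nonempty open set $D\cap\sF(f)$, and Baire category then produces a point of $D\cap\sF(f)$ outside the closure. For (iii), since $\cE_f$ is contained in the union of singular domains of $f$ and $\deg f>1$ forces $\sJ(f)$ to be nonempty (so $\cE_f\neq\sP^1$), one may choose a chordal disk $D'\subset\sF(f)\cap\bP^1$ of positive radius with $a(D')\not\subset\cE_f$; then by (i) applied to $D'$ in place of $D$, the condition $\liminf_n\sup_{D'}[f^n,a]=0$ would force $a(D')\subset\cE_f$, a contradiction, so $\liminf_n[f^n(D'),a(D')]>0$.

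The main obstacle is the reduction $a(D^\circ)\cap\bP^1\subset\sF(f)$ inside (i): the classical Hurwitz/Montel argument adapts to rigid-analytic disks via the openness in Fact \ref{eq:action}, but one must carefully handle the Berkovich versus classical distinction in the definition of $\cE_f$ and, in the non-archimedean characteristic $0$ setting, invoke Rivera-Letelier's iterative logarithm of Theorem \ref{th:uniformization} to interpret the near-identity iterates on components of $\cE_f$ rigorously; the characteristic $0$ hypothesis enters essentially at this point.
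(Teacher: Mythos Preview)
Your approach to (i) is essentially the paper's: pass to a subsequence with $n_{j+1}-n_j\to\infty$ and use $f^{n_{j+1}}=f^{n_{j+1}-n_j}\circ f^{n_j}$ to see that $f^{n_{j+1}-n_j}$ is near the identity on small disks around points of $a(D)$. The paper is terser (it does not first reduce to $a(D)\subset\sF(f)$), but the idea is the same.

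For (ii), however, your argument has a genuine gap. Countability of $\bigcup_{n>N}\supp[f^n=a]$ says nothing about its closure, and weak equidistribution of $[f^n=a]/(d^n+\deg a)$ toward $\mu_f$ does not prevent individual roots from accumulating throughout $D\cap\sF(f)$; Baire category does not rescue this. The paper's proof of (ii) is where the real work lies and where the characteristic $0$ hypothesis is actually used: in the archimedean case one linearizes the singular domain containing $a(D)$ via $h\circ f^{k_0}=\lambda\cdot h$ and shows that, off the finite zero set of $h\circ f^{n_N}$ on $D$, the equation $f^n=a$ has no solutions in $D$ for $n$ large; in the non-archimedean case the same role is played by Rivera-Letelier's iterative logarithm $T_*$ from Theorem~\ref{th:uniformization}, and one excludes the (proper analytic) zero set of $(T_*\circ T^{\omega_a})\circ f^{n_N}$. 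You correctly flag $T_*$ as relevant, but it is needed for (ii), not for (i).

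Your argument for (iii) also breaks down: you conflate $\sup_{D'}[f^n,a]$ with $[f^n(D'),a(D')]$. Part (i) gives information only under the hypothesis $\liminf_n\sup_{D'}[f^n,a]=0$, whereas the negation of (iii) only yields sequences $z_k,z_k'\in D'$ with $[f^{n_k}(z_k),a(z_k')]\to 0$, from which nothing about $\sup_{D'}[f^{n_k},a]$ follows. The paper instead constructs $D'$ dynamically: it takes the cycle $\mathcal{C}$ of the singular domain containing $a(D)$, uses $d>1$ to find a component $V$ of $f^{-1}(\mathcal{C})\setminus\mathcal{C}$, and places $D'$ inside $a^{-1}(V)\cap(\bP^1\setminus\sJ(f))$, so that $a(D')$ and $\bigcup_n f^n(D')$ are eventually separated by the combinatorics of the cycle.
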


\begin{proof}[Proof of $(i)$]
By $\liminf_{n\to\infty}\sup_D[f^n,a]=0$, 
there is
a sequence $(n_j)$ in $\bN$ tending to $\infty$ as $j\to\infty$
 such that $\lim_{j\to\infty}\sup_D[f^{n_j},a]=0$ and that
 $\lim_{j\to\infty}(n_{j+1}-n_j)=\infty$.
 For every $z\in D$,
 set $D'':=\{w\in\bP^1:[w,a(z)]\le r\}$
 in $a(D)$ for $r>0$ small enough. Then
 $\liminf_{j\to\infty}\sup_{D''}[f^{n_{j+1}-n_j},\Id]
 \le\limsup_{j\to\infty}\sup_D[f^{n_{j+1}},f^{n_j}]=0$, so that $a(z)\in\cE_f$.
 Hence $a(D)\subset\cE_f$.
 \end{proof}

\begin{proof}[Proof of $(ii)$]
When $K$ is archimedean, let $Y$ be the component of $\cE_f$
containing $a(D)$, which is by the first assertion
either a Siegel disk or an Herman ring of $f$. 
Setting $k_0:=\min\{n\in\bN:f^n(Y)=Y\}\in\bN$, 
there are a sequence $(n_j)$ in $\bN$ and an $N\in\bN$
such that $f^{n_N}(D)\subset Y$,
that $k_0|(n_j-n_N)$ for every $j\ge N$, and 
that $a=\lim_{j\to\infty}(f^{k_0})^{(n_j-n_N)/k_0}\circ f^{n_N}$
uniformly on $D$. Then $D\cap\sJ(f)=\emptyset$.
Let $\lambda\in\bC$ be the rotation number of $Y$ in that
there exists
a holomorphic injection $h:Y\to\bC$ such that $h\circ f^{k_0}=\lambda\cdot h$ on
$Y$. Then $|\lambda|=1$ but $\lambda$ is not a root of the unity (by $d>1$). Choosing
a subsequence of $(n_j)$ if necessary, 
$\lambda_a:=\lim_{j\to\infty}\lambda^{(n_j-n_N)/k_0}\in\bC$ exists. 
For every $n\ge n_N$, if $n-n_N$ is not divided by $k_0$, then
$D\cap\supp[f^n=a]=\emptyset$. For every $n\ge n_N$,
if $k_0|(n-n_N)$, then
$h\circ f^n-h\circ a=(\lambda^{(n-n_N)/k_0}-\lambda_a)\cdot(h\circ f^{n_N})$
on $D$, so $(D\setminus(h\circ f^{n_N})^{-1}(0))\cap\supp[f^n=a]=\emptyset$
if $n$ is large enough.

When $K$ is non-archimedean, let $Y$ be the component of $\cE_f$ containing
$a(D)$. Without loss of generality, we assume that $\infty\not\in Y$, and then
applying Theorem \ref{th:uniformization} to this $Y$, we obtain
$p\in\bN$, $k_0\in\bN$,
$T$, and $T_*$ as in Theorem \ref{th:uniformization}.
There are a sequence $(n_j)$ in $\bN$ and an $N\in\bN$
such that $f^{n_N}(D)\subset Y$, that $k_0|(n_j-n_N)$ for every $j\ge N$,
and that $a=\lim_{j\to\infty}(f^{k_0})^{(n_j-n_N)/k_0}\circ f^{n_N}$
uniformly on $D$. Then $D\cap\sJ(f)=\emptyset$.
Choosing a subsequence of $(n_j)$ if necessary, 
$\omega_a:=\lim_{j\to\infty}(n_j-n_N)/k_0\in\bZ_p$ exists. 
For every $n\ge n_N$, if $n-n_N$ is not divided by $k_0$,
then $D\cap\supp[f^n=a]=\emptyset$. 
For every $n\ge n_N$, if $k_0|(n-n_N)$, then
\begin{gather}
 f^n-a=(T^{(n-n_N)/k_0}-T^{\omega_a})\circ f^{n_N}\label{eq:unifnonarch}
\end{gather}
on $D$. Choose a ($K$-closed) disk $B=\{z\in K:|z-b|\le r\}$ in $D$
for some $b\in D\setminus\{\infty\}$ and $r\in|K^*|$
small enough, 
and fix $\epsilon\in|K^*|$ so small that
setting $Z_{\epsilon}:=\bigcup_{w\in B\cap(((T_*\circ T^{\omega_a})\circ f^{n_N})^{-1}(0))}\{z\in B:|z-w|<\epsilon\}$, $B\setminus Z_\epsilon\neq\emptyset$.
By the maximum modulus principle from the rigid analysis
(cf.\ \cite[\S6.2.1, \S7.3.4]{BGR}), 
$\min_{z\in f^{n_N}(B\setminus Z_{\epsilon})}|T_*\circ T^{\omega_a}(z)|>0$, so 
that by the uniform convergence \eqref{eq:logarithm} and 
the equality \eqref{eq:unifnonarch},
$(B\setminus Z_{\epsilon})\cap\supp[f^n=a]=\emptyset$ if $n$ is large enough. 
\end{proof}

\begin{proof}[Proof of $(iii)$ in Lemma $\ref{th:limit}$]
By the first assertion,
there is a unique singular domain $U$ of $f$ containing $a(D)$.
Fix $n_0\in\bN$ such that $f^{n_0}(U)=U$, and
set $\mathcal{C}:=\bigcup_{j=0}^{n_0-1}f^j(U)$.
Then there is a component $V$ of $f^{-1}(\mathcal{C})\setminus\mathcal{C}$
since $f:\mathcal{C}\to\mathcal{C}$ is injective and $d>1$.
Fix a chordal disk $D''$ of radius $>0$
in $a^{-1}(V)\cap(\bP^1\setminus\sJ(f))$, so that $a(D'')\subset V\subset f^{-1}(\mathcal{C})\setminus\mathcal{C}$.
If $a(D'')\cap\bigcup_{n\in\bN\cup\{0\}}f^n(D'')=\emptyset$, then
we are done by setting $D'=\{z\in\bP^1:[z,b]\le r\}$ for some $b\in D''$
and $r>0$ small enough.
If there is $N\in\bN\cup\{0\}$ such that $a(D'')\cap f^N(D'')\neq\emptyset$,
then setting $D':=\{z\in\bP^1:[z,b]\le r\}$ for some
$b\in D''\cap f^{-N}(a(D''))$ and $r>0$ small enough,
we have $\liminf_{n\to\infty}[a(D'),f^n(D')]>0$
since $a(D')\cap\bigcup_{n\ge N+1}f^n(D')\subset a(D'')\cap\bigcup_{n\in\bN}f^n(a(D''))\subset V\cap\mathcal{C}=\emptyset$.
\end{proof}

\begin{lemma}\label{th:modify}
 For every 
 $w_0\in\bP^1\setminus(\overline{\bigcup_{n\in\bN:n>N}\supp[f^n=a]}\cup\sJ(f))$,
 there is a function $\phi_0\in C^1(\sP^1)$ such that
 $\phi_0\equiv\log[w_0,\cdot]_{\can}$ on
 $\bigcup_{n\in\bN:n>N}\supp[f^n=a]\cup\sJ(f)$.
\end{lemma}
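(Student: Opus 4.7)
The key observation is that the closed set $K := \overline{\bigcup_{n>N}\supp[f^n=a]} \cup \sJ(f) \subset \sP^1$ does not contain $w_0 \in \bP^1$, so $w_0$ admits an open neighborhood $V$ in $\sP^1$ disjoint from $K$. My plan is to modify $\log[w_0,\cdot]_{\can}$ only inside $V$, producing a globally $C^1$ function that still coincides with $\log[w_0,\cdot]_{\can}$ on $\sP^1 \setminus V \supset K$. The two cases of the absolute value are handled separately.

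In the archimedean case, where $\sP^1 \cong \bP^1(\bC)$ is a smooth manifold and $\log[w_0,\cdot]$ is $C^\infty$ on $\bP^1(\bC)\setminus\{w_0\}$ with only a logarithmic singularity at $w_0$, I would fix a smooth cutoff $\chi \in C^\infty(\bP^1(\bC))$ that equals $1$ outside $V$ and vanishes on a smaller open neighborhood $V_0$ of $w_0$ satisfying $\overline{V_0} \subset V$, and set
\begin{gather*}
\phi_0 := \chi \cdot \log[w_0,\cdot] + (1-\chi)\cdot c
\end{gather*}
for an arbitrary constant $c$. Since $\chi$ vanishes identically on $V_0$, the product $\chi \cdot \log[w_0,\cdot]$ extends by zero across $w_0$ to a $C^\infty$ function on $\bP^1(\bC)$, so $\phi_0 \in C^\infty(\bP^1(\bC)) \subset C^1(\sP^1)$, while $\phi_0 \equiv \log[w_0,\cdot]$ on $\bP^1(\bC) \setminus V \supset K$.

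In the non-archimedean case I would exploit the tree description of $\sP^1$ from \S\ref{sec:tree}. By the Gromov product formula \eqref{eq:Gromov}, the function $\log[w_0,\cdot]_{\can}$ is locally constant on $\sP^1$ away from the unique segment $[\cS_{\can},w_0]$, and parametrised by $\rho$-arc length from $\cS_{\can}$ it decreases linearly with slope $-1$ along that segment, tending to $-\infty$ at the type-I endpoint $w_0$. A basis of open neighborhoods of $w_0$ is given by half-trees $U^* := \{\cS' \in \sP^1 : \cS^* \text{ lies strictly between } \cS_{\can} \text{ and } \cS'\}$, as $\cS^*$ ranges over the open segment $(\cS_{\can},w_0)$ at type II or III points. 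I would choose such a $\cS^* \in \sH^1$ close enough to $w_0$ that $U^* \subset V$, and then set
\begin{gather*}
\phi_0(\cS') := \begin{cases} \log[w_0,\cS^*]_{\can} & \text{if } \cS' \in U^*, \\ \log[w_0,\cS']_{\can} & \text{if } \cS' \in \sP^1 \setminus U^*, \end{cases}
\end{gather*}
which is continuous at $\cS^*$ by the continuity of $\log[w_0,\cdot]_{\can}$ on $\sP^1 \setminus \{w_0\}$.

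The remaining verification is that $\phi_0$ meets the $C^1$ definition preceding Proposition \ref{th:Lipschitz}: $\phi_0$ is locally constant off the single segment $\mathcal{T} := [\cS_{\can},\cS^*] \subset \sH^1$ (literally constant on $U^*$, and outside $U^*$ the variation of $\log[w_0,\cdot]_{\can}$ is confined to $[\cS_{\can},w_0] \setminus U^* = [\cS_{\can},\cS^*]$), with $\rho$-derivative identically $-1$ along $\mathcal{T}$, which is obviously continuous. I do not expect a serious obstacle; the only mild subtlety worth flagging is that the $C^1$ definition permits $\mathcal{T}$ to terminate at the branch point $\cS^*$ where $\phi_0$ transitions from linear variation to being locally constant, so there is no requirement that one-sided derivatives match across $\cS^*$.
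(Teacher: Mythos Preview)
Your proposal is correct and follows essentially the same route as the paper. In the non-archimedean case your truncation point $\cS^*$ is exactly the paper's $\pi_\epsilon(w_0)$ and your $U^*$ is the set $\{\cS:\cS\preceq\pi_\epsilon(w_0)\}$, so the two constructions coincide; in the archimedean case the paper smooths by convolution with $[z]_{\epsilon/2}$ near $w_0$ rather than by a cutoff, but this is an inessential variation yielding the same conclusion.
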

\begin{proof} 
 Fix $w_0\in\bP^1\setminus(\overline{\bigcup_{n\in\bN:n>N}\supp[f^n=a]}\cup\sJ(f))$. 
 Without loss of generality, we can assume that $w_0\neq\infty$, and
 fix $\epsilon>0$ so small that
 $\{\cS\in\sP^1:|\cS-w_0|_{\infty}\le\epsilon\}
 \subset\sP^1\setminus
 (\overline{\bigcup_{n\in\bN}\supp[f^n=a]}\cup\sJ(f))$
 (recall \S \ref{sec:Hsia}, \S \ref{sec:tree} here).
 When $K$ is non-archimedean, 
by the definition of $\pi_{\epsilon}:\sA^1\to\sA^1$,
we have $\{\cS\in\sP^1:\cS\preceq\pi_{\epsilon}(w_0)\}
=\{\cS\in\sP^1:|\cS-w_0|_{\infty}\le\epsilon\}$, and
the function
 \begin{gather*}
 \cS\mapsto\phi_0(\cS):=
\begin{cases}
\log[w_0,\pi_{\epsilon}(w_0)]_{\can} & \text{if }\cS\preceq\pi_{\epsilon}(w_0)\\
 \log[w_0,\cS]_{\can} & \text{otherwise} 
\end{cases}\quad\text{on }\sP^1
\end{gather*}
is in $C^1(\sP^1)$ since it is is continuous on $\sP^1$,
locally constant on $\sP^1$ except for the segment $\mathcal{I}$
in $\sH^1$ joining $\pi_{\epsilon}(w_0)$ and $\cS_{\can}$, and
is linear on $\mathcal{I}$ 
with respect to the length parameter 
induced by the hyperbolic metric $\rho$ on $\sH^1$.
When $K$ is archimedean (so $\sP^1\cong\bP^1$),
there is a function $\phi_0\in C^1(\bP^1)$ satisfying
\begin{gather*}
 z\mapsto\phi_0(z)=
\begin{cases}
 \int_{\bP^1}\log[w_0,w]\rd[z]_{\epsilon/2}(w) & \text{if }|z-w_0|\le\epsilon/2,\\
 \log[w_0,z] & \text{if }|z-w_0|\ge\epsilon\text{ or }z=\infty.
\end{cases}
\end{gather*}
 In both cases, the $\phi_0\in C^1(\sP^1)$ satisfies the desired property.
\end{proof}

\begin{fact}
 For rational functions $\phi,\psi\in K(z)$,
 the {\itshape chordal proximity function} 
 \begin{gather*}
 \cS\mapsto[\phi,\psi]_{\can}(\cS)\quad\text{on }\sP^1
 \end{gather*}
 between $\phi$ and $\psi$ is the unique continuous extension of
 the function $z\mapsto[\phi(z),\psi(z)]$ on $\bP^1$
 to $\sP^1$ (see \cite[Proposition 2.9]{OkuCharacterization}
 for its construction, and also
 \cite[Remark 2.10]{OkuCharacterization}), and
 for every continuous weight $g$ on $\sP^1$, we also define
 its weighted version
 by $\Phi(\phi,\psi)_g:=\log[\phi,\psi]_{\can}-g\circ\phi-g\circ\psi$
 on $\sP^1$.
\end{fact}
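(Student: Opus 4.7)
When $K$ is archimedean we have $\sP^1 = \bP^1$ and $z \mapsto [\phi(z), \psi(z)]$ is already continuous on $\sP^1$, so there is nothing to do. Assume $K$ is non-archimedean. Uniqueness of the continuous extension is immediate from density of $\bP^1$ in $\sP^1$: any two continuous functions on $\sP^1$ that agree on $\bP^1$ must coincide.

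For existence, I would fix lifts $F_\phi, F_\psi$ of $\phi, \psi$ and consider the homogeneous polynomial
\[
R := F_\phi \wedge F_\psi = (F_\phi)_0 (F_\psi)_1 - (F_\phi)_1 (F_\psi)_0 \in K[p_0,p_1]_{\deg\phi + \deg\psi},
\]
which vanishes identically iff $\phi \equiv \psi$ (the degenerate case where $[\phi,\psi]_{\can} \equiv 0$ is the evident continuous extension). Assuming $\phi \not\equiv \psi$, the definition \eqref{eq:chordaldist} of the normalized chordal metric together with $\pi \circ F_\phi = \phi \circ \pi$ gives, for every $z \in \bP^1$ and every $\tilde z \in \pi^{-1}(z)\setminus\{0\}$,
\[
[\phi(z), \psi(z)] = \frac{|R(\tilde z)|}{\|F_\phi(\tilde z)\| \cdot \|F_\psi(\tilde z)\|}.
\]
Normalizing $\|\tilde z\| = 1$ and using the homogeneities $\deg R = \deg\phi + \deg\psi$, $\deg F_\phi = \deg\phi$, etc., this rewrites as $S_R(z) \exp(-T_{F_\phi}(z) - T_{F_\psi}(z))$, which suggests defining
\[
[\phi,\psi]_{\can}(\cS) := S_R(\cS) \exp\bigl(-T_{F_\phi}(\cS) - T_{F_\psi}(\cS)\bigr)\quad\text{on }\sP^1.
\]

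Continuity then follows from three ingredients already invoked in the paper: $S_R$ extends continuously to $\sP^1$ via $\log S_R = \sum_j \log[\cdot, z_j^R]_{\can} + M^\#(R)$ as in \S\ref{sec:local}; $T_{F_\phi}$ and $T_{F_\psi}$ extend as real-valued, Lipschitz continuous functions on $\sP^1$ by Fact \ref{eq:action}; and by compactness of $\sP^1$ the exponential factor is bounded above and below by positive constants so the product is continuous on $\sP^1$. Independence from the choice of lifts is routine: replacing $F_\phi$ by $cF_\phi$ with $c \in K^*$ scales $R$ by $c$ (hence $S_R$ by $|c|$) and shifts $T_{F_\phi}$ by $\log|c|$, and these changes cancel in the defining formula. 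The main obstacle --- really the only substantive point --- is ensuring $\exp(T_{F_\phi} + T_{F_\psi})$ never vanishes on $\sP^1$; on $\bP^1$ this rests on the non-degeneracy $F_\phi^{-1}(0) = \{0\}$ (when $\deg\phi > 0$) built into the definition of a lift, while on $\sH^1 = \sP^1 \setminus \bP^1$ it is automatic from real-valued continuity on the compact $\sP^1$. The weighted version $\Phi(\phi,\psi)_g := \log[\phi,\psi]_{\can} - g \circ \phi - g \circ \psi$ is then a pure definition, well-formed since $g \circ \phi$ and $g \circ \psi$ are continuous on $\sP^1$ by Fact \ref{eq:action}.
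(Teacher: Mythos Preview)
The paper does not actually prove this statement: it is labelled as a \emph{Fact} and simply cites \cite[Proposition 2.9]{OkuCharacterization} for the construction, so there is no in-paper argument to compare against. Your argument is correct and is in fact the standard construction: writing $[\phi(z),\psi(z)]$ on $\bP^1$ as $S_{F_\phi\wedge F_\psi}(z)\exp(-T_{F_\phi}(z)-T_{F_\psi}(z))$ and then invoking the continuous extensions of $S_R$ (from \S\ref{sec:local}) and of $T_{F_\phi},T_{F_\psi}$ (from Fact~\ref{eq:action}) to $\sP^1$ is exactly how the cited reference proceeds. Your treatment of the degenerate case $\phi\equiv\psi$, the independence of the lift, and the non-vanishing of $\exp(T_{F_\phi}+T_{F_\psi})$ are all handled cleanly.
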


For every $n\in\bN$ such that $f^n\not\equiv a$, recall
the following {\itshape Riesz decomposition}
\begin{gather}
\Phi(f^n,a)_{g_f}=U_{g_f,[f^n=a]-(d^n+\deg a)\mu_f}
-U_{g_f,a^*\mu_f}
+\int_{\sP^1}\Phi(f^n,a)_{g_f}\rd\mu_f\label{eq:rewrittenRiesz}
\end{gather}
on $\sP^1$, and also
$U_{g_f,a^*\mu_f}
=g_f\circ a+U_{g_f,a^*\Omega_{\can}}-\int_{\sP^1}(g_f\circ a)\rd\mu_f$ on $\sP^1$
(\cite[Lemma 2.19]{OkuCharacterization}).

\begin{proof}[Proof of Theorem $\ref{th:nonlinearity}$]
Let $k$ be a product formula field of characteristic $0$.
Let $f\in k(z)$ be a rational function of degree $d>1$ and
$a\in k(z)$ a rational function of degree $>0$.
Fix $v\in M_k$.
Let $D$ be a chordal disk in $\bP^1(\bC_v)$
of radius $>0$, and assume that $\liminf_{n\to\infty}\sup_D[f^n,a]_v=0$; otherwise
we are done. By Lemma \ref{th:limit}, 
there are not only a point
$w_0\in D\setminus(\overline{\bigcup_{n\in\bN:n>N}[f^n=a]}\cup\sJ(f)_v)$
but also a chordal disk $D'$ in $\bP^1(\bC_v)\setminus\sJ(f)_v$ of radius $>0$ 
such that $\liminf_{n\to\infty}[f^n(D'),a(D')]_v>0$.
Fix a point $w_1\in D'$. Then also
$w_1\in\bP^1\setminus(\overline{\bigcup_{n\in\bN:n>N}[f^n=a]}\cup\sJ(f)_v)$. 

For every $n\in\bN$ large enough
and each $j\in\{0,1\}$, by
 \eqref{eq:rewrittenRiesz},
\begin{multline}
 \log[f^n(w_j),a(w_j)]_v-g_{f,v}(f^n(w_j))-g_{f,v}(a(w_j))\\
 =U_{g_{f,v},[f^n=a]-(d^n+\deg a)\mu_{f,v}}(w_j)
 -U_{g_{f,v},a^*\mu_{f,v}}(w_j)
+\int_{\sP^1(\bC_v)}\Phi(f^n,a)_{g_{f,v}}\rd\mu_{f,v},\label{eq:twopoints}  
\end{multline}
so that
taking the difference of both sides in \eqref{eq:twopoints}
for each $j\in\{0,1\}$,
since $g_{f,v}$ and $U_{g_{f,v},a^*\mu_{f,v}}$
are bounded on $\sP^1(\bC_v)$, we have
\begin{align*}
& \log[f^n(w_0),a(w_0)]_v-\log[f^n(w_1),a(w_1)]_v\\
=&\int_{\sP^1(\bC_v)}\log[w_0,\cS']_{\can,v}\rd([f^n=a]-(d^n+\deg a)\mu_f)(\cS')\\
 &-\int_{\sP^1(\bC_v)}\log[w_1,\cS']_{\can,v}\rd([f^n=a]-(d^n+\deg a)\mu_f)(\cS')
 +O(1)
\end{align*}
as $n\to\infty$.
In the left hand side, by the choice of $w_0$ and $w_1$, we have
$\log\sup_D[f^n,a]_v\ge\log[f^n(w_0),a(w_0)]_v$ and
 $\liminf_{n\to\infty}\log[f^n(w_1),a(w_1)]_v
 \ge\liminf_{n\to\infty}\log[f^n(D'),a(D')]_v>-\infty$, so that
as $n\to\infty$,
\begin{gather*}
 \log\sup_D[f^n,a]_v +O(1)
 \ge\log[f^n(w_0),a(w_0)]_v-\log[f^n(w_1),a(w_1)]_v.
\end{gather*}
In the right hand side, for each $j\in\{0,1\}$, by
Lemma \ref{th:modify} applied to $w_j$, the inclusion 
$\supp\mu_f\subset\sJ(f)$,
and Theorem \ref{th:quantitativedynamics}
(and $k_s=\overline{k}$
under the assumption that $k$ has characteristic $0$), we have
\begin{multline*}
 \int_{\sP^1(\bC_v)}\log[w_j,\cS']_{\can,v}\rd([f^n=a]-(d^n+\deg a)\mu_f)(\cS')\\
=O\left(\sqrt{n\cdot([f^n=a]\times[f^n=a])(\diag_{\bP^1(\overline{k})})}\right)
\quad\text{as }n\to\infty.
\end{multline*}
The final three estimates complete the proof of \eqref{eq:nonlin} for this $v\in M_k$.
\end{proof}

\begin{fact}
 For a rational function $f(z)\in k(z)$ over a field $k$, a point
 $w\in\bP^1(\overline{k})$ is called a {\itshape multiple} periodic point
 of $f$ if $w\in\supp[f^n=\Id]$ and $[f^n=\Id](\{w\})>1$ for some $n\in\bN$.
 For a rational function $f(z)\in k(z)$ over a field $k$
 of characteristic $0$, there are {\itshape at most finitely many} multiple periodic points
 of $f$ in $\bP^1(\overline{k})$; this is well-known in the case
 that $k=\bC$
 (see, e.g., \cite[\S13]{Milnor3rd}), and holds in general
 {\itshape by the Lefschetz principle} (see, e.g., \cite{Eklof73}).
\end{fact}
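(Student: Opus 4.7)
The plan is to reduce to the case $k=\bC$ via the Lefschetz principle and then invoke the classical complex-dynamical finiteness of non-repelling cycles. The degenerate cases $\deg f\le 1$ will be handled directly: a constant $f$ has at most one periodic point, and a M\"obius transformation has at most two fixed points of each iterate, so its set of multiple periodic points is obviously finite. Assume henceforth that $d:=\deg f\ge 2$.

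The first substantive step is a reduction to a finitely generated ground field. Let $k_0\subset k$ be the subfield generated over $\bQ$ by the (finitely many) coefficients of $f$; then $k_0$ has characteristic $0$ and finite transcendence degree over $\bQ$. Since every periodic point of $f$ satisfies a polynomial equation with coefficients in $k_0$, all multiple periodic points of $f$ lie in $\bP^1(\overline{k_0})\subset\bP^1(\overline{k})$. I would then choose an embedding $\iota:k_0\hookrightarrow\bC$ by sending a transcendence basis to algebraically independent complex numbers and extending algebraically (possible because $\bC$ is algebraically closed of uncountable transcendence degree over $\bQ$), and further extend it to $\overline{\iota}:\overline{k_0}\hookrightarrow\bC$. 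Applying $\overline{\iota}$ coefficient-wise to $f$ produces some $f_\bC\in\bC(z)$ of the same degree $d$.

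The key observation is that being a multiple periodic point is a purely algebraic condition: after moving $w$ away from $\infty$ if necessary, $w$ is a multiple periodic point of $f$ iff there exists $n\in\bN$ with $f^n(w)=w$ and $(f^n)'(w)=1$, and both conditions amount to polynomial equations in $w$ with coefficients in $k_0$. Field embeddings preserve such equations, so $\overline{\iota}$ sends the set of multiple periodic points of $f$ injectively into the set of multiple periodic points of $f_\bC$; it therefore suffices to establish finiteness for $f_\bC$ over $\bC$.

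For the complex case, every multiple periodic point of $f_\bC$ lies on a cycle whose multiplier $\lambda=(f_\bC^{\,m})'(\cdot)$ satisfies $\lambda^\ell=1$ for some $\ell\in\bN$; in particular $|\lambda|\le 1$, so the cycle is non-repelling. By Fatou's classical theorem (made sharp by Shishikura's bound $2d-2$), a rational function of degree $d\ge 2$ on $\bP^1(\bC)$ has only finitely many non-repelling cycles, cf.\ \cite[\S 13]{Milnor3rd}. Since each cycle has finitely many points, $f_\bC$ has finitely many multiple periodic points, and pulling back along $\overline{\iota}$ concludes the argument. The main obstacle is entirely outsourced to this classical complex-dynamical input; the Lefschetz reduction itself is routine, with the only subtlety being to verify that multiplicity as a root of $f^n-\mathrm{Id}$ is preserved under field embeddings, which follows from the preservation of vanishing of the relevant derivatives.
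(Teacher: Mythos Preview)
Your proposal is correct and follows precisely the route the paper indicates: the statement is presented in the paper as a \emph{Fact} with the proof sketch ``well-known for $k=\bC$ (Milnor \S13), and holds in general by the Lefschetz principle (Eklof)'', and you have faithfully unpacked exactly that argument---reduction to a finitely generated subfield of characteristic~$0$, embedding into~$\bC$, and invoking Fatou/Shishikura on non-repelling cycles. There is nothing further to compare.
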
 

\begin{proof}[Proof of Theorem $\ref{th:identity}$]
As seen in the above,
$f$ has at most finitely many
multiple periodic points in $\bP^1(\overline{k})$, and
for every multiple periodic point $w$ of $f$,
setting $p=p_w:=\min\{n\in\bN:[f^n=\Id](\{w\})>1\}$,
by the (formal) power series expansion
$f^p(z)=w+(z-w)+C(z-w)^{[f^p=\Id](\{w\})}+\cdots$ of $f^p$ around $w$,
we also have $\sup_{n\in\bN}[f^n=\Id](\{w\})\le[f^p=\Id](\{w\})$
under the characteristic $0$ assumption.
Hence
$\sup_{n\in\bN}(\sup_{w\in\supp[f^n=\Id]}[f^n=\Id](\{w\}))<\infty$,
so that
$([f^n=\Id]\times[f^n=\Id])(\diag_{\bP^1(\overline{k})})
 \le(d^n+1)\cdot\sup_{w\in\supp[f^n=\Id]}[f^n=\Id](\{w\})=O(d^n)$
 as $n\to\infty$.
 Now \eqref{eq:recurrence} follows from \eqref{eq:nonlin}. 
\end{proof}

\section{Proof of Theorem $\ref{th:discriminant}$}\label{sec:discriminant}

Let $k$ be a field and $k_s$ the separable closure of $k$ in $\overline{k}$.
Let $p(z)\in k[z]$ be a polynomial of degree $>0$ and $\{z_1,\ldots,z_m\}$
the set of all distinct zeros of $p(z)$ in $\overline{k}$ so that
$p(z)=a\cdot\prod_{j=1}^m(z-z_j)^{d_j}$ in $\overline{k}[z]$
for some $a\in k\setminus\{0\}$ and some sequence $(d_j)_{j=1}^m$ in $\bN$.
For a whole, we do not assume $\{z_1,\ldots,z_m\}\subset k_s$.
Let $\{p_1(z),p_2(z),\ldots,p_N(z)\}$ be the set of all 
 mutually distinct, non-constant, irreducible,
 and monic factors of $p(z)$ in $k[z]$ so that
$p(z)=a\cdot\prod_{\ell=1}^Np_\ell(z)^{s_\ell}$ in $k[z]$
 for some sequence $(s_\ell)_{\ell=1}^N$ in $\bN$.
 For every $\ell\in\{1,2,\ldots,N\}$, by the irreducibility of $p_\ell(z)$ in $k[z]$,
 $p_\ell(z)$ is the unique monic minimal polynomial in $k[z]$ of each zero of 
 $p_\ell(z)$ in $\overline{k}$,
 so $p_{\ell}(z)$ and $p_{n}(z)$ have no common zeros in $\overline{k}$
 if $\ell\neq n$. Hence
 for each $j\in\{1,2,\ldots,m\}$, 
 there is a unique $\ell=:\ell(j)\in\{1,2,\ldots,N\}$
 such that $p_\ell(z_j)=0$.
 Now suppose that $\{z_1,z_2,\ldots,z_m\}\subset k_s$.
 Then for every $\ell\in\{1,2,\ldots,N\}$, 
 $p_\ell(z)=\prod_{i:\ell(i)=\ell}(z-z_i)$ in $\overline{k}[z]$,
 so that
\begin{gather}
 d_i=s_{\ell(i)}\label{eq:degree} 
\end{gather}
for every $i\in\{1,2,\ldots,m\}$.
For every distinct $\ell,n\in\{1,2,\ldots,N\}$,
 \begin{gather}
 \textstyle\prod_{j:\ell(j)=\ell}\prod_{i:\ell(i)=n}(z_j-z_i)
 =\prod_{j:\ell(j)=\ell}p_n(z_j)=R(p_\ell,p_n),\label{eq:different}
 \end{gather}
 where $R(p,q)\in k$ is the (usual) resultant of $p(z),q(z)\in k[z]$. 
 The derivation $p'_\ell(z)$ of $p(z)$ in $k[z]$ satisfies
 $p_\ell'(z)=\sum_{h:\ell(h)=\ell}(\prod_{i:\ell(i)=\ell\text{ and }i\neq h}(z-z_i))$
 in $\overline{k}[z]$.
Hence for every $\ell\in\{1,2,\ldots,N\}$, 
 \begin{gather}
 \textstyle\prod_{j:\ell(j)=\ell}\prod_{i:\ell(i)=\ell\text{ and }i\neq j}(z_j-z_i)
 =\prod_{j:\ell(j)=\ell}p_{\ell}'(z_j)
 =R(p_{\ell},p_{\ell}').\label{eq:same}
 \end{gather}
 By \eqref{eq:degree}, \eqref{eq:same}, and \eqref{eq:different}, we have
 \begin{multline*}
 \textstyle D^*(p):=\prod_{j=1}^m\prod_{i:i\neq j}(z_j-z_i)^{d_id_j}
 =\prod_{j=1}^m\prod_{i:i\neq j}(z_j-z_i)^{s_{\ell(i)}s_{\ell(j)}}\\
  =\textstyle\prod_{\ell=1}^N(\prod_{j:\ell(j)=\ell}\bigl(\left(\prod_{i:\ell(i)=\ell\text{ and }i\neq j}(z_j-z_i)^{s_\ell^2}\right)\times\\
 \textstyle \times\left(\prod_{n:n\neq\ell}\prod_{i:\ell(i)=n}(z_j-z_i)^{s_ns_\ell}\right)\bigr))\\  
 =\textstyle\prod_{\ell=1}^N\left(R(p_\ell,p_\ell')^{s_\ell^2}\cdot
\prod_{n:n\neq\ell}R(p_\ell,p_n)^{s_ns_\ell}\right),
\end{multline*}
which is in $k\setminus\{0\}$. Now the proof is complete. \qed

\section*{Acknowledgement}

The author thanks the referee for a very careful scrutiny and
invaluable comments,
and also thanks
Professor Mamoru Asada for discussions on Theorem \ref{th:discriminant}. 
This work was partly done during the author's visiting National Taiwan Normal University and Academia Sinica, and the author thanks the institutes and
Professor Liang-Chung Hsia for their hospitality.
This work was partially supported by JSPS Grant-in-Aid for Young Scientists (B), 24740087.


\def\cprime{$'$}

\end{document}